\newtheorem{theorem}{Theorem}[section]
\newtheorem{corollary}[theorem]{Corollary}
\newtheorem{lemma}[theorem]{Lemma}
\newtheorem{proposition}[theorem]{Proposition}
\newtheorem{definition}[theorem]{Definition}
\newtheorem{remark}[theorem]{Remark}
\numberwithin{equation}{section}
\numberwithin{equation}{section}
\newcommand\at{\allowbreak}
\title{Kawasaki dynamics in continuum: micro- and mesoscopic descriptions}
\author{Christoph Berns\thanks{Fakult\"at f\"ur Mathematik, Universit\"at Bielefeld, Postfach 100 131, 33501 Bielefeld, Germany ({\tt
 cberns@\at math.\at uni-bielefeld.\at de}).} \and Yuri Kondratiev\thanks{Fakult\"{a}t
 f\"{u}r Mathematik, Universit\"{a}t Bielefeld, Postfach 100 131, 33501 Bielefeld,
 Germany ({\tt kondrat@\at math.\at uni-bielefeld.\at de})}\and Yuri Kozitsky\thanks{Instytut Matematyki, Uniwersytet Marii Curie-Sk{\l}odowsiej, 20-031 Lublin, Poland ({\tt jkozi@\at hektor.\at umcs.\at lublin.\at de})}\and Oleksandr
 Kutoviy\thanks{Fakult\"{a}t f\"{u}r Mathematik, Universit\"{a}t
 Bielefeld, Postfach 100 131, 33501 Bielefeld, Germany ({\tt
 kutoviy@\at math.\at uni-bielefeld.\at de}).}}
\begin{document}

\maketitle

\begin{abstract}

The dynamics of an infinite system of point particles in $\mathbb{R}^d$, which hop
and interact with each other, is
described at both micro- and mesoscopic levels.
The states of the system are probability measures on
the space of  configurations of particles.
For a bounded time interval $[0,T)$, the evolution of states $\mu_0 \mapsto \mu_t$ is shown to
hold in a space of sub-Poissonian measures. This result is obtained by:
(a) solving equations for correlation functions, which yields the evolution $k_0 \mapsto k_t$, $t\in [0,T)$, in a scale
of Banach spaces; (b) proving that each $k_t$ is a correlation function for a unique measure $\mu_t$.
The mesoscopic theory is based on
a Vlasov-type  scaling, that yields a mean-field-like approximate description in terms of the particles' density which
obeys
a kinetic equation.
The latter equation is rigorously derived from
that for the correlation functions  by the scaling procedure. We prove that the kinetic equation has a unique solution
$\varrho_t$, $t\in [0,+\infty)$.
\end{abstract}

\section{Introduction}
\subsection{The setup}

In this paper, we study the dynamics of an infinite system of point particles in $\mathbb{R}^d$ which hop
and interact with each other. The corresponding phase space is the set of configurations
\begin{equation}
\label{C100}
 \Gamma=\{\gamma\subset\mathbb R^d :
 |\gamma\cap K|<\infty\text{ for any compact $K\subset\mathbb R^d$
 }\},
\end{equation}
where $|A|$ denotes the cardinality of a finite set $A$.
The set $\Gamma$ is equipped with a complete metric and with the corresponding Borel $\sigma$-field, which allows
one to employ probability measures on $\Gamma$.

In this  work, we follow
the statistical approach to stochastic dynamics, see e.g.,  \cite{Dima0,DimaN,Dima2} and the literature quoted in those articles.  In this approach, a model is specified by a Markov `generator', which acts on observables -- appropriate functions $F:\Gamma \to \mathbb{R}$. For the model considered here, it has the form
\begin{equation}
\label{C2} (LF) (\gamma) = \sum_{x\in \gamma}\int_{\mathbb{R}^d}
c(x,y,\gamma) \left[ F(\gamma \setminus x \cup y) - F(\gamma)\right]
d y, \quad \gamma \in \Gamma.
\end{equation}
In (\ref{C2}) and in the sequel in the
corresponding context, we treat each $x\in \mathbb{R}^d$ also as a
single-point configuration $\{x\}$. That is, if $x$ belongs to $\gamma$ (resp. $y$ does not), by
$\gamma\setminus x$ (resp. $\gamma \cup y$) we mean the configuration which is obtained from $\gamma$ by removing
$x$ (resp. by adding $y$).  The elementary act of the dynamics described by (\ref{C2}), which with probability
$ c(x,y,\gamma)dt$ occurs during the infinitesimal time $dt$, consists in a random change from $\gamma$ to $\gamma\setminus x \cup y$. The rate $c(x,y,\gamma)$ may depend  on $z\in \gamma$ with $z\neq x,y$, which is interpreted as an interaction
 of particles. In this article, we choose
\begin{equation}
  \label{C3}
c(x,y,\gamma) = a (x-y) \exp\left(- E^\phi (y, \gamma) \right),
\end{equation}
where the jump kernel $a: \mathbb{R}^d \rightarrow [0,+\infty) =:  \mathbb{R}_+$ is
such that $a(x) = a(-x)$ and
\begin{equation}
  \label{C4}
 \alpha : = \int_{\mathbb{R}^d} a(x) d x < \infty.
\end{equation}
The second factor in (\ref{C3}) describes the interaction, which is supposed to be pair-wise and repulsive. This means that
\begin{equation}
 \label{C5}
E^\phi (y, \gamma) = \sum_{z\in \gamma} \phi(y-z) \geq 0,
\end{equation}
where the `potential'
$\phi: \mathbb{R}^d \rightarrow  \mathbb{R}_+$ is such that $\phi(x) = \phi(-x)$ and
\begin{equation}
  \label{13A}
 c_\phi := \int_{\mathbb{R}^d} \left( 1 - e^{-\phi(x)}\right)d x < \infty.
\end{equation}
In the sequel, when we speak of the model we consider, we mean the one defined in  (\ref{C2}) -- (\ref{13A}).
We also call it continuum Kawasaki system.

The main reason for us to choose the rates as in (\ref{C3}) is that
any grand canonical Gibbs measure with potential $\phi$, see e.g., \cite{Ruelle}, is invariant (even symmetrizing) for the dynamics generated by (\ref{C2}) with such rates, see \cite{gibbs}.

As is usual for Markov dynamics, the `generator' (\ref{C2}) enters the
backward Kolmogorov equation
\begin{equation}
  \label{R2}
\frac{d}{dt} F_t = L F_t , \qquad F_t|_{t=0} = F_0,
\end{equation}
where, for each $t$, $F_t$ is an observable.
In the  approach we follow, the states of the system are probability measures on
$\Gamma$, and hence $
\int_\Gamma F d \mu $
can be considered as the value of observable $F$ in state $\mu$. This pairing allows one to define also the corresponding forward Kolmogorov or Fokker-Planck equation
\begin{equation}
  \label{R1}
\frac{d}{dt} \mu_t = L^* \mu_t, \qquad \mu_t|_{t=0} = \mu_0.
\end{equation}
The evolutions described by (\ref{C2}) and (\ref{R1}) are mutually  dual in the sense that
\begin{equation*}
%  \label{R3}
\int_{\Gamma} F_t d \mu_0 = \int_{\Gamma} F_0 d \mu_t.
\end{equation*}
Thus, the Cauchy problem in  (\ref{R1}) determines the evolution of states of our model. If we were able
to solve it for all possible probability measures as initial conditions, we could construct
a Markov process on $\Gamma$. For nontrivial models, however, including that considered in this work, this is far beyond the possibilities of
the available technical tools. The main reason for this is that the configuration space $\Gamma$ has a complex topological structure. Furthermore, the mere existence of the process related to (\ref{R1}) would
 not be enough for drawing conclusions on the collective behavior of the considered system.
The basic idea of the approach which we follow is to solve (\ref{R1}) not for all possible $\mu_0$, but only for
those belonging to a properly chosen class of probability measures on $\Gamma$. It turns out that even with such restrictions the direct solving (\ref{R1}) is also unattainable, at least so far.
Then the solution in question is obtained by employing the so called \textit{moment} or \textit{correlation} functions. Similarly as a probability measure on $\mathbb{R}$ is characterized by its moments, a probability measure on $\Gamma$ can
be characterized by its correlation functions. Of course, as not every measure on $\mathbb{R}$ has all moments, not every  measure on $\Gamma$ possesses correlation functions. The mentioned  restriction in the choice of $\mu_0$ takes into account, among others, also this issue.

By certain combinatoric calculations, one transforms (\ref{R1}) into the following Cauchy problem
\begin{equation}
  \label{R4}
\frac{d}{dt} k_t = L^\Delta k_t, \qquad k_t|_{t=0} = k_0,
\end{equation}
where $k_0$ is the correlation function of $\mu_0$. Note that the equation in (\ref{R4}) is, in fact, an infinite chain of coupled linear equations. Then the construction of the evolution of states $\mu_0 \mapsto \mu_t$
is performed by: (a) solving (\ref{R4}) with $k_0 = k_{\mu_0}$; (b) proving that, for each $t$, there exists a \textit{unique} probability measure $\mu_t$ such that $k_t = k_{\mu_t}$. This way of constructing the evolution of states is, in a sense, analogous to that
suggested by  N. N. Bogoliubov \cite{Bog} in the statistical approach to the Hamiltonian dynamics of large systems of interacting physical particles,
cf. \cite{I,DF,L} and also a review in \cite{Dob}.  In the theory of such systems, the equation analogous to (\ref{R4}) is called  \textit{BBGKY chain} \cite{Dob}.

 The description based on (\ref{R1}) or (\ref{R4}) is \textit{microscopic} since one deals with coordinates of individual particles; cf. the Introduction in \cite{P}. More coarse-grained levels are \textit{meso- and macroscopic} ones. They are attained by appropriate space and time scalings \cite{P,spohn}. Of course, certain details of the system's behavior are then lost.
Kinetic equations provide a space-dependent mean-field-like approximate description of the evolution of infinite particle systems. For systems of physical particles, such an equation  is the Boltzmann equation
related to the BBGKY chain, cf. Section 6 in \cite{Dob} and also \cite{P,spohn}.
Nowadays, a mathematically consistent way of constructing the mesoscopic description based on kinetic equations is the procedure analogous to the Vlasov scaling in plasma physics, see \cite{DimaN}.
In its framework, we obtain from (\ref{R4}) a new chain of linear equations for limiting `correlation functions' $r_t$, called \textit{Vlasov hierarchy}.
Note that these $r_t$ may not be correlation functions at all but they  have one important property. Namely, if the initial state $\mu_0$
is the Poisson measure with density $\varrho_0 $,
then $r_t$ is the correlation function for  the Poisson measure $\mu_t$ with the density $\varrho_t $ which solves the corresponding kinetic equation.

In the present article, we aim at:
\begin{itemize}
  \item constructing the evolution of states $\mu_0 \mapsto \mu_t$ of the model (\ref{C2}), (\ref{C3}) by solving (\ref{R4}) and then by identifying its solution with a unique $\mu_t$;
  \item deriving rigorously the limiting Vlasov hierarchy, which includes also the convergence of rescaled correlation functions to the limiting functions $r_t$, as well as deriving the kinetic equation;
  \item studying the solvability of the kinetic equation.
\end{itemize}
Let us make some comments.
When speaking of the evolution of
states, one might
distinguish between equilibrium and non-equilibrium cases.
The
equilibrium evolution is built with the help of the reversible
 measures, if such exist for the considered model, and with the corresponding
Dirichlet forms. Recall that, for the choice as in (\ref{C3}), such reversible measures are grand canonical Gibbs measures.
The result is a stationary Markov process, see \cite{gibbs} where a version of the model studied in this work was considered.
Note that in this framework, the evolution is restricted to the set of states which are absolutely continuous with respect to
the corresponding Gibbs measures.
The non-equilibrium evolution, where initial states
can be ``far away" from equilibrium, is much more
interesting and much more complex  -- for the model considered in
this work, it has been constructed for noninteracting particles only, see
\cite{free}. In this article, we go further in this direction and construct the non-equilibrium evolution for the continuum Kawasaki system with repulsion. Results similar to those presented here were
obtained for a continuum Glauber model in \cite{FKK}, and for a spatial ecological model in \cite{FKKK}.

There exists  a rich theory of interacting particle systems  based on continuous time Markov
processes, which studies so called
lattice models, see \cite{ligget} and Part II of \cite{spohn}, and also \cite{LiggettP} for the latest results.
The essential common feature of these models is that the particles are distributed over a discrete set (lattice), typically $\mathbb{Z}^d$.
However, in many real-world applications, such as population biology or spatial ecology, the habitat, i.e., the space where the particles are placed, should essentially be continuous, cf. \cite{Neuhauser}, which we take into account in this work.
 In statistical physics, a lattice model of `hopping spins'  was put forward in \cite{kawaa}, see also a review in  \cite{kawaa1}. There exists an extended theory of interacting particles hopping over $\mathbb{Z}^d$, cf. \cite[Section 1 in Part II]{spohn}, and also,
e.g., \cite{Bovier,dH} for some aspects of the recent development. However, this theory cannot be applied to continuum Kawasaki systems
for a number of reasons. One of which is that a
bounded $K\subset \mathbb{R}^d$ can contain \textit{arbitrary} number of particles, whereas the number of
particles contained in a bounded $K\subset \mathbb{Z}^d$ is at most $|K|$.

\subsection{The overview of the results}

%\begin{itemize}

 % \subsubsection{The microscopic description}

The microscopic description is performed in Section \ref{Sec3}
 in two steps.
\underline{First}, we prove that, for a given correlation function $k_0$, the  problem (\ref{R4}) has a unique classical solution $k_t$
on a bounded time interval $[0,T)$ and in a Banach space, somewhat bigger than that  containing $k_0$, cf. Theorems \ref{1tm} and \ref{1atm}.
Here bigger means that the initial
space is a proper subspace of the latter. The parameter $T>0$ is related to the `difference' between the spaces.
The main characteristic feature of both Banach spaces is that if their elements are correlation functions of some probability measures on $\Gamma$, then these measures are sub-Poissonian, cf. Definition \ref{SPdf} and Remark \ref{SPrk}. This latter property is important in view of the mesoscopic description which we construct subsequently, cf. Remark \ref{SP1rk}. The restriction of the evolution $k_0 \mapsto k_t$ to a bounded time interval is because we failed to apply to (\ref{R4})  semigroup methods, or similar techniques,  which would allow for solving this equation on $[0,+\infty)$  in the mentioned Banach spaces. Our method is based on Ovcyannikov's observation, cf.   \cite[pp. 9--13]{[D]} and \cite{O}, that an unbounded operator can be redefined as a bounded one acting, however, from a `smaller' to a `bigger' space, both belonging to a scale of Banach spaces, indexed by $\vartheta\in\mathbb{R}$.
The essential fact here is that the norm of such a bounded operator has an upper bound proportional to $(\vartheta'' - \vartheta')^{-1}$, see (\ref{C9}).
This implies that the expansion for $k_t$ in powers of $t$ converges for $t\in [0,T)$, cf. (\ref{8CD}) and (\ref{Lest}).
\underline{Second},  we prove that the evolution $k_0 \mapsto k_t$ corresponds to the evolution $\mu_0 \mapsto \mu_t$ of \textit{uniquely} determined probability measures
on $\Gamma$ in the following sense. In Theorem \ref{Qtm}, we show that if $k_0$ is the correlation function of a sub-Poissonian measure $\mu_0$,
then, for each $t\in (0,T)$, $k_t$ is also a correlation function for a \textit{unique} sub-Poissonian measure $\mu_t$.
The proof is based on the approximation of states of the infinite system by probability measures on $\Gamma$ supported on the set of finite configurations $\Gamma_0 \subset \Gamma$ (we call such measures $\Gamma_0$-states). The evolution of the latter states can be derived directly from (\ref{R1}), which we perform in Theorem \ref{0Ktm}.
It is described by a stochastic semigroup constructed
with the help of a version of Miyadera's theorem obtained in  \cite{[TV]}. Then we prove that the correlation functions of the
mentioned states supported on $\Gamma_0$ weakly converge to the solution $k_t$, which implies that it has the positivity property as in (\ref{Q1}), which by Proposition \ref{Tobpn}
yields that $k_t$ is also a correlation function for a unique state.

The mesoscopic description is performed in Section \ref{Sec4} in the framework of the scaling method developed in \cite{DimaN}.
First, we  derive an analog of (\ref{R4}) for the rescaled
correlation functions, that is, the Cauchy problem in (\ref{C12}). This problem contains the scaling parameter
$\varepsilon>0$, which is supposed to tend to zero in the mesoscopic limit. In this limit, we obtain another Cauchy problem, given in
(\ref{C22}). By the results of Section \ref{Sec3}, we readily prove the existence of classical solutions of both (\ref{C12}) and (\ref{C22}). The essence of the scaling technique which we use is that the evolution $r_0\mapsto r_t$ obtained from (\ref{C22}) preserves the set of
correlation functions of Poisson measures, cf. Lemma \ref{Vlm}. Then the density $\varrho_t$ that corresponds to $r_t$ satisfies the kinetic equation (\ref{C24}), which we then transform into an integral equation, cf. (\ref{C25}). For its eventual solutions,
by the Gronwall inequality we obtain an a priori bound, cf. (\ref{0C}), (\ref{0C1}), by means of which we prove the existence of a unique solution of both (\ref{C24}) and (\ref{C25}) on $[0,+\infty)$, which implies the global evolution $r_0\mapsto r_t$, cf. Theorem \ref{Vatm}.
Finally, in Theorem \ref{3tm} we show that the rescaled correlation functions converge to the Poisson correlation functions $r_t$
as $\varepsilon \to 0^+$, uniformly on compact subsets of $[0,T)$. This result links both micro-
and mesoscopic evolutions constructed in this work.

Let us mention some open problems related to the model studied in this work. The existence of the global mesoscopic evolution does not, however, imply
that the restriction of the microscopic evolution to a bounded time interval is only a technical problem. One cannot exclude that, due to an infinite number of jumps, $k_t$ finally leaves any space of the type of (\ref{2D}).
It is still unclear whether
 the global evolution $k_0 \mapsto k_t$ exists in any of Banach spaces reasonably bigger than those used in Theorems \ref{1tm} and \ref{1atm}.

A very interesting problem, in the spirit of the philosophy of \cite{Cox}, is to relate the rate of convergence in (\ref{0Q13}) to the value of $t$, which determines the space in which $k_t$ lies, cf. Theorem \ref{1atm}. Another open problem is the existence of globally bounded solutions of the kinetic equation (\ref{C24}). It can be proven that, for a local repulsion, this is the case. Namely, if $\phi$ in (\ref{C24}) is such that
$(\phi * \varrho)(x) = \varkappa \varrho(x)$ for all $x$ and some $\varkappa \geq 0$, and if $\varrho_0$ is a bounded continuous function on $\mathbb{R}^d$, then the solution $\varrho_t$ is also a continuous function, cf. Corollary \ref{o1rk}, such that $\varrho_t \leq \sup_{x\in \mathbb{R}^d}\varrho_0 (x) + \epsilon$ for all $t>0$ and any $\epsilon>0$. From this one can see how important can be the relation between the radii of the jump kernel $a$ and of the repulsion potential $\phi$.

%\end{itemize}

\section{The basic notions }

In this paper, we work in the approach of
\cite{FKKK,Dima0,Dima,Dima1,Dima2,Tobi} where all the relevant
details can be found.

\subsection{The configuration spaces}
By $\mathcal{B}(\mathbb{R}^d)$ and $\mathcal{B}_{\rm
b}(\mathbb{R}^d)$ we denote the sets of all Borel and all
bounded Borel subsets of $\mathbb{R}^d$, respectively.
The configuration space $\Gamma$ is
\begin{equation*} %\label{C100}
 \Gamma=\{\gamma\subset\mathbb R^d :
 |\gamma\cap K|<\infty\text{ for any compact $K\subset\mathbb R^d$
 }\}.
\end{equation*}
Each
$\gamma \in \Gamma$ can be identified with the following positive
Radom measure
\[
\gamma(d x) = \sum_{y\in \gamma} \delta_y(d x) \in \mathcal{M}(\mathbb{R}^d),
\]
where $\delta_y$ is the Dirac measure centered at $y$, and
$\mathcal{M}(\mathbb{R}^d)$ denotes the  set of all positive Radon measures on $\mathcal{B}(\mathbb{R}^d)$.
This allows one to consider $\Gamma$ as a subset of $\mathcal{M}(\mathbb{R}^d)$, and hence to endow it
with the vague topology. The latter is the weakest topology in which all the maps
\[
\Gamma \ni \gamma \mapsto \int_{\mathbb{R}^d} f(x) \gamma(d x) = \sum_{x\in \gamma} f(x), \qquad f\in C_0(\mathbb{R}^d),
\]
are continuous. Here $C_0 (\mathbb{R}^d)$ stands for the set of all
continuous functions $f:\mathbb{R}^d \to \mathbb{R}$ which have compact support.
The vague topology on $\Gamma$ admits a metrization, which turns it into a complete and
separable (Polish) space, see, e.g., \cite[Theorem 3.5]{Oles0}.
By $\mathcal{B}(\Gamma)$ we denote the corresponding Borel $\sigma$-field.

For $n\in
\mathbb{N}_0 :=\mathbb{N}\cup \{0\}$, the set of $n$-particle configurations
in $\mathbb{R}^d$ is
\begin{equation}
\label{1U} \Gamma^{(0)} = \{ \emptyset\}, \qquad  \Gamma^{(n)} =
\{\eta \subset \mathbb{R}^d: |\eta| = n \}, \ \ n\in \mathbb{N}.
\end{equation}
For $n\geq 2$,
$\Gamma^{(n)}$ can be identified with the symmetrization of the
set $$\{(x_1, \dots , x_n)\in (\mathbb{R}^d)^n: x_i \neq x_j, \ {\rm for} \ i\neq
j\}\subset (\mathbb{R}^{d})^n,$$ which allows one to introduce the
corresponding topology and hence the Borel $\sigma$-field
$\mathcal{B}(\Gamma^{(n)})$. The set of finite configurations is
\begin{equation}
  \label{2U}
\Gamma_{0} := \bigsqcup_{n\in \mathbb{N}_0}  \Gamma^{(n)}.
\end{equation}
We equip it with the topology of the disjoint union and hence with
the Borel $\sigma$-field $\mathcal{B}(\Gamma_{0})$. Obviously, $\Gamma_0$ is a subset of
$\Gamma$, cf. (\ref{C100}). However, the topology just mentioned and that induced from $\Gamma$ do not coincide.
At the same time, $\Gamma_0 \in \mathcal{B}(\Gamma)$.
In the sequel, by $\Lambda$ we denote
a bounded subset of $\mathbb{R}^d$, that is, we always mean $\Lambda \in \mathcal{B}_{\rm b} (\mathbb{R}^d)$.
For such $\Lambda$, we set
\[
 \Gamma_\Lambda = \{ \gamma \in \Gamma: \gamma \subset \Lambda\}.
\]
Clearly, $\Gamma_\Lambda$ is also a measurable subset of $\Gamma_0$ and the following holds
\[
\Gamma_\Lambda =  \bigsqcup_{n\in \mathbb{N}_0} \Bigl(\Gamma^{(n)} \cap \Gamma_\Lambda\Bigr),
\]
which allows one to equip $\Gamma_\Lambda$ with the topology induced by that of $\Gamma_0$. Let
$\mathcal{B}(\Gamma_\Lambda)$ be the corresponding Borel $\sigma$-field.
It is clear that, for $A \in \mathcal{B}(\Gamma_0)$, $\Gamma_\Lambda \cap A \in \mathcal{B}(\Gamma_\Lambda)$.
It can be proven, see Lemma~1.1 and Proposition~1.3 in \cite{Obata}, that
\begin{equation}
 \label{K2}
\mathcal{B}(\Gamma_\Lambda) = \{ \Gamma_\Lambda \cap A : A \in \mathcal{B}(\Gamma)\},
\end{equation}
and hence
\begin{equation}
  \label{0K2}
  \mathcal{B}(\Gamma_0) = \{ A \in \mathcal{B}(\Gamma): A \subset \Gamma_0\}.
\end{equation}
Next, we define the projection
\begin{equation}
 \label{K3}
\Gamma \ni \gamma \mapsto p_\Lambda (\gamma)= \gamma_\Lambda := \gamma\cap \Lambda, \qquad \Lambda \in \mathcal{B}_{\rm b} (\mathbb{R}^d).
\end{equation}
It is known \cite[p. 451]{AlKonR} that $\mathcal{B}(\Gamma)$ is the smallest $\sigma$-algebra of subsets of
$\Gamma$ such that the maps $p_\Lambda$ with all $\Lambda \in \mathcal{B}_{\rm b} (\mathbb{R}^d)$ are
$\mathcal{B}(\Gamma)/\mathcal{B}(\Gamma_\Lambda)$ measurable. This means that $(\Gamma, \mathcal{B}(\Gamma))$ is the projective limit
of the measurable spaces $(\Gamma_\Lambda, \mathcal{B}(\Gamma_\Lambda))$, $\Lambda \in \mathcal{B}_{\rm b} (\mathbb{R}^d)$.
A set $A\in \mathcal{B}(\Gamma_0)$ is said to be bounded if
\begin{equation}
 \label{6A}
 A \subset \bigsqcup_{n=0}^N \Gamma^{(n)}_\Lambda
\end{equation}
for some $\Lambda \in \mathcal{B}_{\rm b}(\mathbb{R}^d)$ and $N\in
\mathbb{N}$. The smallest $\Lambda$ such that $A \subset \Gamma_\Lambda$ will be called the \textit{support} of $A$.

\subsection{Measures and functions}

Given $n\in \mathbb{N}$, by $m^{(n)}$ we denote the restriction of
the Lebesgue product measure $dx_1 dx_2 \cdots dx_n $ to
$(\Gamma^{(n)}, \mathcal{B}(\Gamma^{(n)}))$. The
Lebesgue-Poisson measure with intensity $\varkappa>0$ is a measure on $(\Gamma_0, \mathcal{B}(\Gamma_0))$ defined by
\begin{equation}
  \label{1A}
  \lambda_\varkappa = \delta_{\emptyset} + \sum_{n=1}^\infty \frac{\varkappa^n}{n!} m^{(n)}.
\end{equation}
For $\Lambda\in \mathcal{B}_{\rm b}(\mathbb{R}^d)$, the restriction of
$\lambda_\varkappa$ to $\Gamma_{\Lambda}$ will be denoted by $\lambda_\varkappa^\Lambda$.
This is a finite measure on $\mathcal{B}(\Gamma_\Lambda)$ such that
\[
\lambda^\Lambda_\varkappa (\Gamma_\Lambda) = \exp[\varkappa m(\Lambda)],
\]
where $m(\Lambda):=m^{(1)}(\Lambda)$ is the Lebesgue measure of $\Lambda$. Then
\begin{equation}
 \label{3A}
 \pi^\Lambda_\varkappa  := \exp ( - \varkappa m(\Lambda)) \lambda^\Lambda_\varkappa
\end{equation}
is a probability measure on $\mathcal{B}(\Gamma_\Lambda)$.
It can be shown \cite{AlKonR} that the family $\{\pi^\Lambda_\varkappa\}_{\Lambda \in \mathcal{B}_{\rm b}(\mathbb{R}^d)}$
is consistent, and hence there exists a unique probability measure, $\pi_\varkappa$, on $\mathcal{B}(\Gamma)$ such that
\[
 \pi^\Lambda_\varkappa = \pi_\varkappa \circ p^{-1}_\Lambda, \qquad \Lambda \in \mathcal{B}_{\rm b} (\mathbb{R}^d),
\]
where $p_\Lambda$ is the same as in (\ref{K3}).  This $\pi_\varkappa$ is called the Poisson measure.
 The Poisson
measure $\pi_{\varrho}$ corresponding  to the density $\varrho:
\mathbb{R}\rightarrow \mathbb{R}_{+}$ is introduced by means of the
measure $\lambda_{\varrho}$, defined as in (\ref{1A}) with $\varkappa m$
replaced by $m_\varrho$, where, for $\Lambda \in \mathcal{B}_{\rm
b}(\mathbb{R}^d)$,
\begin{equation}
  \label{3Aa}
m_{\varrho} (\Lambda) := \int_{\Lambda} \varrho (x) d x,
\end{equation}
which is supposed to be finite. Then $\pi_{\varrho}$ is defined by its projections
\begin{equation}
  \label{3Ab}
 \pi^\Lambda_{\varrho} = \exp ( - m_{\varrho}(\Lambda)) \lambda^\Lambda_{\varrho}.
\end{equation}
For $\varkappa =1$, we shall drop the subscript and consider the Lebesgue-Poisson measure $\lambda$ and the Poisson measure $\pi$.

For a measurable $f:\mathbb{R}^d \rightarrow \mathbb{R}$ and $\eta
\in \Gamma_0$, the Lebesgue-Poisson exponent is
\begin{equation}
  \label{4A}
  e(f, \eta) = \prod_{x\in \eta} f(x), \qquad e(f, \emptyset ) = 1.
\end{equation}
Clearly, $ e(f, \cdot)\in L^1 (\Gamma_0, d \lambda)$ for any $f \in L^1
(\mathbb{R}^d):= L^1
(\mathbb{R}^d, dx)$, and
\begin{equation}
  \label{5A}
\int_{\Gamma_0}    e(f, \eta) \lambda (d\eta) =
\exp\left\{\int_{\mathbb{R}^d} f(x) d x \right\}.
\end{equation}
 By $B_{\rm bs} (\Gamma_0)$ we denote the set of all
bounded measurable functions $G: \Gamma_0 \rightarrow \mathbb{R}$,
which have bounded  supports. That is, each such $G$ is the zero
function on $\Gamma_0 \setminus A$ for some bounded $A$, cf. (\ref{6A}).  Note that
any measurable  $G: \Gamma_0 \rightarrow \mathbb{R}$ is in fact a
sequence of measurable symmetric functions $G^{(n)} :
(\mathbb{R}^d)^n  \rightarrow \mathbb{R}$ such that, for $\eta = \{x_1 , \dots, x_n\}$,
$G(\eta) = G^{(n)}(x_1 , \dots, x_n)$. We say that $F:\Gamma\to \mathbb{R}$ is a \textit{cylinder} function
if there exists $\Lambda \in \mathcal{B}_{\rm b} (\mathbb{R}^d)$ and $G:\Gamma_\Lambda \to \mathbb{R}$ such that
$F(\gamma) = G(\gamma_\Lambda)$ for all $\gamma\in \Gamma$. By $\mathcal F_{\text{cyl}}(\Gamma)$ we denote the set of all
measurable cylinder functions.
 For $\gamma \in \Gamma$, by
writing $\eta \Subset  \gamma$ we mean that $\eta \subset \gamma$
and $\eta$ is finite, i.e., $\eta \in \Gamma_0$. For $G \in B_{\rm
bs}(\Gamma_0)$, we set
\begin{equation}
  \label{7A}
  (KG)(\gamma) = \sum_{\eta \Subset  \gamma} G(\eta), \qquad \gamma \in \Gamma.
\end{equation}
Clearly  $K$ maps $B_{\rm bs}(\Gamma_0)$ into
$\mathcal{F}_{\rm cyl}(\Gamma)$ and is linear and positivity
preserving. This map plays an important role in the theory of
configuration spaces, cf. \cite{Tobi}.

By $\mathcal{M}^1 (\Gamma)$ we denote the set of all
probability measures on $(\Gamma, \mathcal{B}(\Gamma))$, and let
 $\mathcal{M}^1_{\rm fm} (\Gamma)$ denote the subset of $\mathcal{M}^1 (\Gamma)$ consisting of all
measures which have
finite local moments, that is, for which
\begin{equation*}
%\label{8A}
\int_\Gamma |\gamma_\Lambda|^n \mu(d \gamma) < \infty
\quad \ \ \ {\rm for} \ \ {\rm all} \ \ n\in \mathbb{N} \ \ {\rm
and}\ \ \Lambda \in \mathcal{B}_{\rm b} (\mathbb{R}^d).
\end{equation*}
\begin{definition}
 \label{LACdf}
A measure $\mu \in
\mathcal{M}^1_{\rm fm} (\Gamma)$ is said to be {\it locally
absolutely continuous} with respect to the Poisson measure $\pi$ if,
for every $\Lambda \in \mathcal{B}_{\rm b} (\mathbb{R}^d)$, the projection
\begin{equation}
 \label{Lambda}
\mu^\Lambda := \mu \circ p_\Lambda^{-1}
\end{equation}
is absolutely continuous
with respect to $\pi^\Lambda$ and hence with respect to $\lambda^\Lambda$, see (\ref{3A}).
\end{definition}
A measure $\chi$ on $(\Gamma_0, \mathcal{B}(\Gamma_0))$ is said to
be {\it locally finite} if $\chi(A)< \infty$ for every bounded measurable
$A\subset \Gamma_0$. By $\mathcal{M}_{\rm lf} (\Gamma_0)$ we denote
the set of all such measures.
Let a measurable $A\subset \Gamma_0$ be bounded, and let $\mathbb{I}_A$ be its
indicator function on $\Gamma_0$. Then $\mathbb{I}_A$ is in $B_{\rm
bs}(\Gamma_0)$, and hence one can apply (\ref{7A}). For  $\mu \in
\mathcal{M}^1_{\rm fm} (\Gamma)$, we let
\begin{equation}
  \label{9A}
  \chi_\mu (A) = \int_{\Gamma} (K\mathbb{I}_A) (\gamma) \mu(d \gamma),
\end{equation}
which uniquely determines a measure $\chi_\mu \in \mathcal{M}_{\rm
lf}(\Gamma_0))$. It is called the {\it correlation measure} for
$\mu$. For instance, let $A\subset \Gamma^{(n)}\subset\Gamma_0$ and let
$\Lambda\in \mathcal{B}_{\rm b}(\mathbb{R}^d)$ be the support of $A$, cf. (\ref{6A}).
Then  $(K\mathbb I_{A})(\gamma)$ is the number of distinct
$n$-particle sub-configurations of $\gamma$ contained in $\Lambda$, and thus
 $\chi_\mu (A)$ is the expected number of such sub-configurations in $\Lambda$ in state $\mu$.
In particular, if $A\subset \Gamma^{(1)}$, then $\chi_\mu (A)$ is just the
expected number of particles in state $\mu$ contained in $\Lambda$.

The equation \eqref{9A} defines a map $K^*: \mathcal{M}^1_{\rm fm} (\Gamma)
\rightarrow \mathcal{M}_{\rm lf}(\Gamma_0))$ such that $K^*\mu =
\chi_\mu$. In particular, $K^*\pi = \lambda$. It is known, see
\cite[Proposition 4.14]{Tobi}, that $\chi_\mu $ is absolutely
continuous with respect to $\lambda$ if $\mu$ is locally absolutely
continuous with respect to $\pi$. In this case, we have that, for any
$\Lambda \in \mathcal{B}_{\rm b} (\mathbb{R}^d)$ and $\lambda^\Lambda$-almost all $\eta \in \Gamma_\Lambda$,
\begin{eqnarray}
  \label{9AA}
k_\mu (\eta)  =  \frac{d \chi_\mu}{d \lambda}(\eta) & = &
\int_{\Gamma_{\Lambda}} \frac{d\mu^\Lambda}{d \pi^\Lambda} (\eta
\cup \gamma) \pi^\Lambda (d \gamma)\\[.2cm] & = & \int_{\Gamma_{\Lambda}} \frac{d\mu^\Lambda}{d \lambda^\Lambda} (\eta
\cup \xi) \lambda^\Lambda (d \xi) \nonumber.
\end{eqnarray}
The Radon-Nikodym derivative $k_\mu$ is called the {\it correlation
function} corresponding to the measure $\mu$.
As all real-valued measurable functions on $\Gamma_0$, each $k_\mu$ is the collection of measurable $k_\mu^{(n)}: (\mathbb{R}^d)^n \to \mathbb{R}$ such that
$k_\mu^{(0)} \equiv 1$, and $k_\mu^{(n)}$, $n\geq 2$, are symmetric. In particular, $k_\mu^{(1)}$ is the particle's density in state $\mu$, cf. (\ref{9A}).

Recall that by $B_{\rm bs} (\Gamma_0)$ we denote the set of all
bounded measurable functions $G: \Gamma_0 \rightarrow \mathbb{R}$,
which have bounded  supports.
We also set
\begin{equation}
  \label{bbs}
 B_{\rm bs}^+ (\Gamma_0)=\{ G \in B_{\rm bs}(\Gamma_0): (KG)(\gamma) \geq 0\}.
\end{equation}
The following fact is known, see Theorems 6.1, 6.2 and Remark 6.3  in \cite{Tobi}.
\begin{proposition}
 \label{Tobpn}
Suppose
$\chi \in \mathcal{M}_{\rm lf}(\Gamma_0))$ has the properties
\begin{equation}
  \label{10A}
\chi(\{\emptyset\}) = 1 , \qquad \int_{\Gamma_0} G (\eta ) \chi(d \eta) \geq 0 ,
\end{equation}
for each $G\in B^+_{\rm bs}(\Gamma_0)$. Then
there exist $\mu\in \mathcal{M}^1_{\rm fm}(\Gamma)$ such that $K^*
\mu =\chi$. For the uniqueness of such $\mu$, it is enough that the Radon-Nikodym derivative
(\ref{9AA}) of
$\chi$ obeys
\begin{equation}
  \label{11A}
k(\eta) \leq  \prod_{x \in \eta} C_R (x) ,
\end{equation}
for all $\eta\in \Gamma_0$ and for some
locally integrable $C_R : \mathbb{R}^d \rightarrow \mathbb{R}_{+}$.
\end{proposition}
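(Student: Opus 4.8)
The plan is to reconstruct $\mu$ from $\chi$ by building a consistent family of finite-volume measures $\{\mu^\Lambda\}$ on the spaces $\Gamma_\Lambda$ and then using that $(\Gamma,\mathcal{B}(\Gamma))$ is the projective limit of the $(\Gamma_\Lambda,\mathcal{B}(\Gamma_\Lambda))$. On $\Gamma_\Lambda$ the transform $K$ of (\ref{7A}) becomes the finite map $(K_\Lambda g)(\eta)=\sum_{\xi\subset\eta}g(\xi)$, a bijection of boundedly-supported functions on $\Gamma_\Lambda$ onto themselves whose inverse is the M\"obius transform $(K_\Lambda^{-1}h)(\eta)=\sum_{\xi\subset\eta}(-1)^{|\eta\setminus\xi|}h(\xi)$. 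The elementary point that drives the whole argument is that $K_\Lambda(K_\Lambda^{-1}G)=G$, so if $G\ge0$ has bounded support then $K_\Lambda^{-1}G\in B^+_{\rm bs}(\Gamma_0)$ (its $K$-image equals $G\ge0$ on all of $\Gamma$); this is what converts the bare positivity (\ref{10A}) into positivity of measures.

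First I would define, on the space $\{KG:G\in B_{\rm bs}(\Gamma_0)\}$ of cylinder functions (well defined since $K$ is injective), the functional $L(KG):=\int_{\Gamma_0}G\,d\chi$. It is linear, maps $1=K\mathbb{I}_{\{\emptyset\}}$ to $\chi(\{\emptyset\})=1$, and is positive because $KG\ge0$ forces $G\in B^+_{\rm bs}(\Gamma_0)$, whence $L(KG)\ge0$ by (\ref{10A}). Restricting $L$ to cylinder functions over a fixed bounded $\Lambda$ and representing it by a (finite, since $L(1)=1$) measure $\mu^\Lambda$ on $\Gamma_\Lambda$ gives $\int_{\Gamma_\Lambda}(K_\Lambda\mathbb{I}_A)\,d\mu^\Lambda=\chi(A)$ for every bounded $A\subset\Gamma_0$ with support in $\Lambda$. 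When $k:=d\chi/d\lambda$ is integrable enough, $\mu^\Lambda$ is simply $R^\Lambda\lambda^\Lambda$ with $R^\Lambda(\eta)=\int_{\Gamma_\Lambda}e(-1,\xi)\,k(\eta\cup\xi)\,\lambda(d\xi)$; here $R^\Lambda\ge0$ follows from the observation above together with the change-of-order identity $\int G\,R^\Lambda\,d\lambda=\int(K_\Lambda^{-1}G)\,d\chi$, and $\int_{\Gamma_\Lambda}R^\Lambda\,d\lambda^\Lambda=\chi(\{\emptyset\})=1$ on taking $G\equiv1$. Consistency of $\{\mu^\Lambda\}$ under the projections $p_\Lambda$ is a combinatorial identity: substituting the formula for $R^{\Lambda'}$ into $\int_{\Gamma_{\Lambda'\setminus\Lambda}}R^{\Lambda'}(\eta\cup\xi)\,\lambda(d\xi)$ and resumming, an internal sum of the form $\sum_{\zeta\subset\omega}(-1)^{|\zeta|}$ annihilates every nonempty $\omega$ and leaves $R^\Lambda(\eta)$. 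Kolmogorov's extension theorem then yields a unique $\mu\in\mathcal{M}^1(\Gamma)$ with $\mu\circ p_\Lambda^{-1}=\mu^\Lambda$; writing $|\gamma_\Lambda|^n$ as a Stirling combination of falling factorials, each of which is $K_\Lambda$ of a multiple of an indicator of some $\Gamma^{(j)}_\Lambda$, shows $\mu\in\mathcal{M}^1_{\rm fm}(\Gamma)$; and $\chi_\mu(A)=\int_\Gamma(K\mathbb{I}_A)\,d\mu=\int_{\Gamma_\Lambda}(K_\Lambda\mathbb{I}_A)\,d\mu^\Lambda=\chi(A)$ for bounded $A$ gives $K^*\mu=\chi$.

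For uniqueness under (\ref{11A}), observe that $k(\eta)\le\prod_{x\in\eta}C_R(x)=e(C_R,\eta)$ makes the inversion absolutely convergent on every bounded $\Lambda$: $\int_{\Gamma_\Lambda}e(1,\xi)\,k(\eta\cup\xi)\,\lambda(d\xi)\le e(C_R,\eta)\exp\big(\textstyle\int_\Lambda C_R(x)\,dx\big)<\infty$. Consequently, for any $\mu\in\mathcal{M}^1_{\rm fm}(\Gamma)$ with $K^*\mu=\chi$ one has (using the standard facts of \cite{Tobi} that (\ref{11A}) forces local absolute continuity) a local density $\rho^\Lambda=d\mu^\Lambda/d\lambda^\Lambda$ satisfying $k=\widehat K_\Lambda\rho^\Lambda$ on $\Gamma_\Lambda$ with $(\widehat K_\Lambda g)(\eta)=\int_{\Gamma_\Lambda}g(\eta\cup\xi)\,\lambda(d\xi)$, and the absolute convergence just noted lets one apply the inverse operator and Fubini to get $\rho^\Lambda(\eta)=\int_{\Gamma_\Lambda}e(-1,\xi)\,k(\eta\cup\xi)\,\lambda(d\xi)$. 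Thus $\mu^\Lambda$, and hence its projective limit $\mu$, is uniquely determined by $\chi$.

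The step I expect to be the main obstacle is producing the finite-volume measures $\mu^\Lambda$ from (\ref{10A}) alone: (\ref{10A}) is positivity tested only against the nonlinear cone $B^+_{\rm bs}(\Gamma_0)$, and without any growth control on $k$ the naive series defining $R^\Lambda$ need not converge, so one must instead run a Lenard-type functional-analytic argument --- extending the positive functional $L$ and representing it by a measure, handling the levels $\Gamma^{(n)}_\Lambda$ separately --- or a weak-compactness argument on finite-particle approximations. The consistency and the measure-theoretic bookkeeping are routine by comparison, and the uniqueness half is, level by level, a Stieltjes-type determinacy ensured by (\ref{11A}).
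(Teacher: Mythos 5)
First, a point of comparison: the paper does not prove this proposition at all — it is imported from the literature (Theorems 6.1, 6.2 and Remark 6.3 in \cite{Tobi}, going back to Lenard), so there is no internal proof to measure your argument against; what you have attempted is to reprove the cited result.

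Within your attempt, the decisive step is exactly the one you flag and then leave unresolved: passing from the positivity (\ref{10A}) to the finite-volume measures $\mu^\Lambda$. The functional $L(KG)=\int G\,d\chi$ lives on functions $K_\Lambda G$ that are in general unbounded on $\Gamma_\Lambda$ (they grow like $(1+|\eta|)^{N(G)}$) and merely measurable, and $\Gamma_\Lambda$ is not compact; hence no Riesz--Markov or Daniell representation applies off the shelf, and positivity of a linear functional on such a space does not by itself yield a representing measure — one needs a monotone-continuity property, a Haviland/moment-problem argument adapted to the cone $\{G:\ KG\ge 0\}$, or Lenard's original construction. Likewise your explicit formula $R^\Lambda(\eta)=\int_{\Gamma_\Lambda} e(-1,\xi)\,k(\eta\cup\xi)\,\lambda(d\xi)$ and the consistency resummation presuppose absolute convergence, which local finiteness of $\chi$ does not provide: boundedness of sets in the sense of (\ref{6A}) truncates the particle number, so $\chi(\Gamma_\Lambda)$ may well be infinite and the inverse $K$-transform series may diverge. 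Thus the existence half of your proposal is not a proof but a reduction to precisely the theorem being cited. The uniqueness half is essentially the right (Lenard-type) argument: the bound (\ref{11A}) makes the inversion absolutely convergent and pins down the local densities; but note that you also use, without proof, that any $\mu\in\mathcal{M}^1_{\rm fm}(\Gamma)$ with $K^*\mu=\chi$ and (\ref{11A}) is locally absolutely continuous, whereas the paper only quotes the converse implication (Proposition 4.14 of \cite{Tobi}); this step needs either a reference or an argument (for instance, moment determinacy of $|\gamma_\Lambda|$ under the sub-Poissonian bound) rather than being taken for granted.
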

Let $\pi_\varrho$ be the Poisson measure as in (\ref{3Ab}), and let $\Lambda$ be the support of a given
bounded $A\subset \Gamma_0$. If $A\subset \Gamma^{(n)}$, cf. (\ref{1U}), then
\begin{equation}
  \label{chi}
\chi_{\pi_{\varrho}} (A) = \int_{\Lambda^n} \varrho(x_1) \cdots \varrho(x_n) dx_1 \cdots dx_n = \left(\int_{\Lambda} \varrho(x) dx \right)^n,
\end{equation}
which, in particular, means that particles appear in $\Lambda$ independently. In  this case,
\begin{equation}
  \label{11Aa}
 k_{\pi_\varrho} (\eta) = e(\varrho, \eta),
\end{equation}
where $e$ is as in (\ref{4A}). In particular,
\begin{equation}
  \label{11Ab}
 k^{(n)}_{\pi_\varrho} (x_1 ,\dots, x_n) = \varrho(x_1) \cdots \varrho(x_n), \qquad n\in \mathbb{N}.
\end{equation}
\begin{definition}
  \label{SPdf}
A locally
absolutely continuous measure $\mu \in
\mathcal{M}^1_{\rm fm} (\Gamma)$, cf. Definition \ref{LACdf}, is called sub-Poissonian if its correlation function $k_\mu$ obeys (\ref{11A})
for some locally integrable $C_R : \mathbb{R}^d \rightarrow \mathbb{R}_{+}$.
\end{definition}
\begin{remark}
  \label{SPrk}
If $\mu$ is sub-Poissonian and $A$ is as in (\ref{chi}), then
$$\chi_\mu(A) \leq C^n \left( \int_\Lambda k_\mu^{(1)} (x) dx\right)^n,$$ for some $C>0$.
That is, the correlation measure is controlled by the density in this case. For instance, if one knows that
$k_{\mu_t}^{(1)}$ does not explode for all $t>0$, then so does $k_{\mu_t}$, and hence $\mu_t$ exists for all $t>0$.
A faster increase of $\chi_\mu(A)$, e.g., as $n!$, can be interpreted as \textit{clustering} in state $\mu$.
\end{remark}

Finally, we present the following integration rule, cf.
\cite[ Lemma 2.1]{Dima0},
\begin{equation}
  \label{12A}
\int_{\Gamma_0} \sum_{\xi \subset \eta} H(\xi, \eta \setminus \xi,
\eta) \lambda (d \eta) =   \int_{\Gamma_0}\int_{\Gamma_0}H(\xi,
\eta, \eta\cup \xi) \lambda (d \xi) \lambda (d \eta),
\end{equation}
 which holds for any appropriate function $H$.

\section{Microscopic dynamics}
\label{Sec3}
In view of the fact that $\Gamma$ contains also infinite configurations, the direct construction of the evolution
based on (\ref{R2}) and (\ref{C2}) cannot be done, and thus we pass to
the description based on correlation functions, cf. (\ref{R4}) and (\ref{9AA}).
The `generator' in (\ref{R4}) has the form, cf. \cite[eq. (4.8)]{Dima2}
\begin{eqnarray}
  \label{C5c}
\left( {L}^\Delta k\right) (\eta) & = & \sum_{y\in \eta} \int_{\mathbb{R}^d} a (x-y)
 e( \tau_y, \eta \setminus y \cup x)\\[.2cm] & \times & \left( \int_{\Gamma_0}e(t_y, \xi) k( \xi \cup x \cup \eta \setminus y)
 \lambda (d \xi) \right) dx \nonumber \\[.2cm] & - & \int_{\Gamma_0} k(\xi \cup \eta)
  \left( \sum_{x\in \eta} \int_{\mathbb{R}^d} a(x-y) e(\tau_y,\eta) \right.\nonumber\\[.2cm]
  & \times &   e(t_y, \xi) d y \bigg{)} \lambda (d\xi).  \nonumber
\end{eqnarray}
Here $e$ is as in (\ref{4A}) and
\begin{equation}
  \label{18A}
  t_x (y) = e^{-\phi (x-y)} - 1, \qquad \tau_x (y) = t_x (y) + 1.
\end{equation}
We shall also consider the following auxiliary evolution $G_0 \mapsto G_t$, dual to that $k_0 \mapsto k_t$ described by (\ref{R4}) and (\ref{C5c}).
The duality is understood in the sense
\begin{equation}
  \label{C5d}
\langle \! \langle G_t , k_0 \rangle \! \rangle = \langle \! \langle G_0 ,
k_t \rangle \!\rangle,
\end{equation}
where
\begin{equation}
  \label{19A}
  \langle \! \langle G,k\rangle \! \rangle := \int_{\Gamma_0} G(\eta) k(\eta)\lambda (d \eta) ,
\end{equation}
and $\lambda$ is the Lebesgue-Poisson measure defined in (\ref{1A}) with $\varkappa=1$.
The `generators' are related to each other by
\begin{equation}
  \label{dualL}
 \langle \! \langle G , L^\Delta k \rangle \! \rangle = \langle \! \langle \widehat{L} G,
k \rangle \!\rangle.
\end{equation}
Then the equation dual to (\ref{R4}) is
\begin{equation}
  \label{16A}
\frac{d}{dt} G_t = \widehat{L} G_t, \qquad G_t|_{t=0} = G_0,
\end{equation}
with, cf. \cite[eq. (4.7)]{Dima2},
\begin{eqnarray}
  \label{C5b}
\left( \widehat{L}G\right) (\eta) & = & \sum_{\xi \subset \eta} \sum_{x\in \xi}
\int_{\mathbb{R}^d} a (x-y)   e(\tau_y, \xi) \\[.2cm] & \times & e(t_y , \eta \setminus \xi)
 \left[ G(\xi \setminus x \cup y) - G(\xi) \right]dy. \nonumber
\end{eqnarray}
\subsection{The evolution of correlation functions}
\label{Sec3.1}
We consider (\ref{R4}) with $L^\Delta$ given in (\ref{C5c}). To place this problem in the right
context we introduce the following Banach spaces. Recall that a function $G:\Gamma_0 \to \mathbb{R}$ is
a sequence of $G^{(n)}: (\mathbb{R}^d)^n \to \mathbb{R}$, $n\in \mathbb{N}_0$, where $G^{(0)}$ is constant and
all $G^{(n)}$, $n\geq 2$, are symmetric. Let $k:\Gamma_0\to \mathbb{R}$ be such that $k^{(n)}\in L^\infty ((\mathbb{R}^d)^n)$, for $n\in \mathbb{N}$. For this $k$ and $\vartheta \in \mathbb{R}$, we set
\begin{equation}
 \label{o5}
\|k\|_\vartheta =  \sup_{n\in \mathbb{N}_0} \nu_n(k) \exp(\vartheta
n),
\end{equation}
where
\begin{equation}
  \label{0o5}
\nu_0(k) = |k^{(0)}|, \qquad \nu_n(k) = \|k^{(n)} \|_{L^\infty ((\mathbb{R}^d)^n)}, \quad n\in \mathbb{N}.
\end{equation}
Then
\begin{equation}
  \label{2D}
  \mathcal{K}_\vartheta := \{ k: \Gamma_0 \rightarrow \mathbb{R} : \|k\|_\vartheta < \infty\},
\end{equation}
is a real Banach space with norm (\ref{o5}) and usual point-wise linear operations. Note that
$\{\mathcal{K}_\vartheta:\vartheta \in \mathbb{R}\}$ is a scale of Banach spaces in the sense that
\begin{equation}
  \label{0o6}
   \mathcal{K}_\vartheta \subset   \mathcal{K}_{\vartheta'}, \qquad {\rm for} \ \ \vartheta > \vartheta'.
\end{equation}
As usual, by a classical solution of (\ref{R4}) in $\mathcal{K}_\vartheta$ on
time interval $I$, we understand a map $I\ni t\mapsto k_t\in\mathcal{K}_\vartheta$,
which is continuous on $I$, continuously differentiable on the interior of $I$, lies in the domain of $L^\Delta$, and
solves  (\ref{R4}).  Recall that we suppose
(\ref{13A}) and (\ref{C4}).
\begin{theorem}
 \label{1tm}
Given $\vartheta \in \mathbb{R}$ and $T>0$, we let
\begin{equation}\label{beta}
 \vartheta_0=\vartheta+  2 \alpha T \exp(c_{\phi}e^{-\vartheta}).
\end{equation}
Then the problem (\ref{R4}) with $k_0 \in
{\mathcal{K}}_{\vartheta_0}$ has a unique classical solution $k_t\in
{\mathcal{K}}_{\vartheta}$ on $[0,T)$.
\end{theorem}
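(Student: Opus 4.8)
The plan is to realize $L^\Delta$ as a bounded operator between the spaces of the scale $\{\mathcal{K}_\vartheta\}$ and then to solve \eqref{R4} by a convergent power series in $t$, in the spirit of the Ovsyannikov method alluded to in the introduction. First I would establish the key estimate: for any $\vartheta' < \vartheta''$, the operator $L^\Delta$ defined in \eqref{C5c} maps $\mathcal{K}_{\vartheta''}$ into $\mathcal{K}_{\vartheta'}$ and satisfies an Ovsyannikov-type bound
\begin{equation}
 \label{plan1}
 \|L^\Delta k\|_{\vartheta'} \leq \frac{M}{\vartheta'' - \vartheta'} \|k\|_{\vartheta''},
\end{equation}
where $M = M(\vartheta'')$ depends only on $\alpha$, $c_\phi$ and $e^{-\vartheta''}$. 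To get this one inserts the definition \eqref{C5c}, uses that $0 \le \tau_y \le 1$ and $-1 \le t_y \le 0$ (so $|e(\tau_y,\cdot)| \le 1$ and $|e(t_y,\xi)| \le e(|t_y|,\xi)$), applies the integration rule \eqref{5A} to carry out the $\lambda(d\xi)$-integral against $e(|t_y|,\xi)$, producing a factor $\exp(c_\phi e^{-\vartheta''})$ via \eqref{13A}, and bounds the $x$- and $y$-integrals of $a$ by $\alpha$ from \eqref{C4}. The combinatorial factor coming from $\sum_{y\in\eta}$ and from the shift $\eta \mapsto \eta\setminus y \cup x$ (which raises $|\eta|$ by at most one in the worst term) is absorbed, after multiplying by $e^{\vartheta' n}$ and taking the supremum, by the elementary inequality $n e^{-(\vartheta''-\vartheta')n} \le 1/[e(\vartheta''-\vartheta')]$; this is precisely where the constant $2\alpha$ and the dependence on $e^{-\vartheta}$ in \eqref{beta} come from.

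Next, given the bound \eqref{plan1}, I would run the standard Ovsyannikov/autonomous-Cauchy-Kovalevskaya scheme. Fix the target $\vartheta$ and the endpoint $\vartheta_0$ as in \eqref{beta}, and for $s \in [0,1]$ interpolate $\vartheta_s = \vartheta + s(\vartheta_0 - \vartheta)$. Define the series
\begin{equation}
 \label{plan2}
 k_t = \sum_{n=0}^{\infty} \frac{t^n}{n!} (L^\Delta)^n k_0,
\end{equation}
where the $n$-th term is estimated by splitting $[\vartheta,\vartheta_0]$ into $n$ equal subintervals of length $(\vartheta_0-\vartheta)/n$ and applying \eqref{plan1} $n$ times; this yields $\|(L^\Delta)^n k_0\|_\vartheta \le n!\,[M e / (\vartheta_0 - \vartheta)]^n \|k_0\|_{\vartheta_0}$, so that \eqref{plan2} converges in $\mathcal{K}_\vartheta$ whenever $t < (\vartheta_0 - \vartheta)/(Me)$. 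Choosing $\vartheta_0$ as in \eqref{beta} makes this radius at least $T$, so the series defines a $\mathcal{K}_\vartheta$-valued map on $[0,T)$. Term-by-term differentiation (justified by the same geometric bounds on any $[0,T']$ with $T' < T$, using an intermediate space $\mathcal{K}_{\vartheta'}$, $\vartheta < \vartheta' < \vartheta_0$, to house $L^\Delta k_t$) shows $t \mapsto k_t$ is continuously differentiable on $(0,T)$, lies in the domain of $L^\Delta$ in the sense that $L^\Delta k_t \in \mathcal{K}_{\vartheta'}$, and solves \eqref{R4}.

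For uniqueness I would argue within the scale: if $k_t$ and $\tilde k_t$ are two classical solutions in $\mathcal{K}_\vartheta$ on $[0,T)$, their difference $u_t$ solves \eqref{R4} with $u_0 = 0$; iterating Duhamel's formula $u_t = \int_0^t L^\Delta u_s\,ds$ together with \eqref{plan1} across a chain of intermediate spaces gives, on any $[0,T']$ with $T' < T$, the bound $\|u_t\|_{\vartheta} \le (MeT'/(\vartheta_0-\vartheta))^n \sup_{s \le T'}\|u_s\|_{\vartheta_0}$ for every $n$, which forces $u_t \equiv 0$ once $T' < (\vartheta_0-\vartheta)/(Me)$, hence on all of $[0,T)$. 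The main obstacle is the first step: one must track the combinatorics in \eqref{C5c} carefully enough to see that the gain in the particle-number weight $e^{\vartheta n}$ exactly compensates the loss from the term that increases $|\eta|$, and that the $\lambda(d\xi)$-integrations genuinely produce only the finite constant $\exp(c_\phi e^{-\vartheta})$ rather than something growing in $n$ — this is what makes \eqref{beta} an $n$-independent shift and is the crux of placing the problem in the right Banach scale.
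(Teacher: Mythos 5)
Your existence argument is, in substance, the paper's own proof: the same Ovsyannikov bound as in (\ref{0o10})--(\ref{C9}), obtained by the same manipulations (bounding $e(\tau_y,\cdot)$ by $1$, integrating $e(|t_y|,\xi)e^{-\vartheta''|\xi|}$ via (\ref{5A}) to produce $\exp(c_\phi e^{-\vartheta''})$, using (\ref{C4}) for the $a$-integrals, and absorbing the factor $|\eta|$ through $ne^{-\delta n}\le 1/(e\delta)$), followed by the same power series (\ref{8CD}) with the same $m$-fold splitting of $[\vartheta,\vartheta_0]$ as in (\ref{Lest}), giving convergence in $\mathcal{K}_\vartheta$ exactly for $t<T$ with $\vartheta_0$ as in (\ref{beta}). (A cosmetic slip: the move $\eta\mapsto\eta\setminus y\cup x$ preserves $|\eta|$; the extra particles come only from $\xi$, but this does not change your estimate.)

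The uniqueness step, however, runs the scale in the wrong direction and, as written, would fail. By (\ref{0o6}) one has $\mathcal{K}_{\vartheta_0}\subset\mathcal{K}_{\vartheta}$, and a classical solution of (\ref{R4}) in $\mathcal{K}_\vartheta$ is only known to take values in $\mathcal{K}_\vartheta$; the quantity $\sup_{s\le T'}\|u_s\|_{\vartheta_0}$ in your iterated Duhamel bound may be infinite, so the estimate with source space $\mathcal{K}_{\vartheta_0}$ is not available for an arbitrary competing solution. The repairable version of your idea iterates \emph{downward}: fix $\vartheta_1<\vartheta$, split $\vartheta-\vartheta_1$ into $n$ equal pieces, and use (\ref{C9}) along this descending chain to get
\begin{equation*}
\|u_t\|_{\vartheta_1}\;\le\;\frac{t^n}{n!}\left(\frac{2\alpha n\exp(c_\phi e^{-\vartheta_1})}{e(\vartheta-\vartheta_1)}\right)^{n}\sup_{s\le t}\|u_s\|_{\vartheta},
\end{equation*}
which tends to zero as $n\to\infty$ for $t$ below a positive threshold depending on $\vartheta-\vartheta_1$; since the embedding $\mathcal{K}_\vartheta\hookrightarrow\mathcal{K}_{\vartheta_1}$ is injective, $u_t=0$ in $\mathcal{K}_\vartheta$ on that initial interval, and restarting at its right endpoint finitely many times covers every $[0,T']\subset[0,T)$. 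Note that the paper itself settles uniqueness by a different (and terser) remark, namely the real analyticity of $t\mapsto k_t$ obtained from the series; your route, once the direction of the scale is corrected, is the standard Ovsyannikov uniqueness argument and is fine.
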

According to the above theorem, for arbitrary $T>0$ and $\vartheta$, one can pick
the initial space such that the evolution $k_0 \mapsto k_t$ lasts in $\mathcal{K}_\vartheta$
until  $t<T$. On the other hand, if the initial space is given, the evolution
is restricted in time to the interval $[0,T(\vartheta))$ with
\begin{equation}
 \label{8}
T(\vartheta) = \frac{\vartheta_0 - \vartheta}{2 \alpha} \exp\left( - c_\phi e^{-\vartheta}\right).
\end{equation}
Clearly, $T(\vartheta_0) = 0$ and $T(\vartheta) \rightarrow 0$ as $\vartheta \rightarrow - \infty$.
Hence, there exists $T_*=T_* (\vartheta_0, \alpha, c_\phi)$ such that
$T(\vartheta) \leq T_*$ for all $\vartheta \in (-\infty, \vartheta_0]$.  Set
\begin{equation}
  \label{T}
 \vartheta(t)  =  \sup\{ \vartheta \in (-\infty , \vartheta_0]: t < T(\vartheta)\}.
\end{equation}
Then the alternative version
of the above theorem can be formulated as follows.
\begin{theorem}
 \label{1atm}
For every $\vartheta_0\in \mathbb{R}$, there exists $T_* = T_* (\vartheta_0, \alpha, c_\phi)$ such that
the problem (\ref{R4}) with $k_0 \in \mathcal{K}_{\vartheta_0}$ has a unique classical solution $k_t\in \mathcal{K}_{\vartheta(t)}$
on $[0, T_*)$.\end{theorem}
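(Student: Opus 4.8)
The statement is a repackaging of Theorem~\ref{1tm}, so the plan is to obtain it by gluing together, along the scale $\{\mathcal{K}_\vartheta\}_{\vartheta\in\mathbb{R}}$, the fixed-space solutions already furnished there. The first step is elementary analysis of the function $\vartheta\mapsto T(\vartheta)$ from (\ref{8}): it is continuous on $(-\infty,\vartheta_0]$, one has $T(\vartheta_0)=0$, and $T(\vartheta)\to 0$ as $\vartheta\to-\infty$ because the factor $\exp(-c_\phi e^{-\vartheta})$ decays faster than the factor $(\vartheta_0-\vartheta)/2\alpha$ grows. Hence $T_*:=\sup_{\vartheta\le\vartheta_0}T(\vartheta)$ is finite and strictly positive, and this is the $T_*$ claimed; inspecting the sign of $(\ln T)'(\vartheta)=-(\vartheta_0-\vartheta)^{-1}+c_\phi e^{-\vartheta}$ shows in addition that $T$ is unimodal, so every super-level set $\{\vartheta\le\vartheta_0:T(\vartheta)>t\}$ with $0\le t<T_*$ is a non-empty open interval whose supremum is precisely the $\vartheta(t)$ of (\ref{T}), and moreover $T(\vartheta(t))=t$.

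Next I would fix $t\in[0,T_*)$ and pick any $\vartheta$ with $T(\vartheta)>t$. Solving (\ref{beta}) for $\vartheta_0$ shows that Theorem~\ref{1tm}, applied with this $\vartheta$ and with time horizon $T(\vartheta)$, is exactly the assertion that (\ref{R4}) with $k_0\in\mathcal{K}_{\vartheta_0}$ has a unique classical solution $s\mapsto k^{(\vartheta)}_s\in\mathcal{K}_\vartheta$ on $[0,T(\vartheta))$; in particular $k^{(\vartheta)}_t$ is defined. If $\vartheta'<\vartheta$ are both admissible for this $t$, then by the scale inclusion (\ref{0o6}) the restriction of $s\mapsto k^{(\vartheta)}_s$ to the smaller of the two time intervals is also a classical solution of (\ref{R4}) in $\mathcal{K}_{\vartheta'}$, so by the uniqueness part of Theorem~\ref{1tm} it coincides there with $k^{(\vartheta')}$. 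Consequently $k_t:=k^{(\vartheta)}_t$ is independent of the admissible $\vartheta$ chosen, and $k_t\in\mathcal{K}_\vartheta$ for every $\vartheta<\vartheta(t)$. Defining $k_t$ in this way for all $t\in[0,T_*)$ and using that near any $t_0$ the family $s\mapsto k_s$ agrees with a fixed-space classical solution, one reads off that $t\mapsto k_t$ is continuous, continuously differentiable on $(0,T_*)$, takes values in the domain of $L^\Delta$, and solves (\ref{R4}); uniqueness of such a moving-space solution follows by restricting any competitor to a single $\mathcal{K}_\vartheta$ and invoking uniqueness in Theorem~\ref{1tm}.

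The one point that does not reduce to soft book-keeping with the scale is the membership $k_t\in\mathcal{K}_{\vartheta(t)}$ itself — not merely $k_t\in\bigcap_{\vartheta<\vartheta(t)}\mathcal{K}_\vartheta$, which in general is a strictly larger space — because, as observed above, $T(\vartheta(t))=t$, so Theorem~\ref{1tm} is not available with $\vartheta=\vartheta(t)$ and a horizon exceeding $t$. I would get this directly from the Ovcyannikov series $k_t=\sum_{n\ge 0}\frac{t^n}{n!}(L^\Delta)^n k_0$: splitting $[\vartheta(t),\vartheta_0]$ into $n$ equal pieces and using the operator bound $\|L^\Delta\|_{\vartheta''\to\vartheta'}\le\mathrm{const}\cdot(\vartheta''-\vartheta')^{-1}$ behind Theorem~\ref{1tm}, one must verify that the resulting majorizing numerical series still converges at the boundary value $t=T(\vartheta(t))$. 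I expect this boundary estimate to be the main — and essentially the only — obstacle; everything else is a routine transcription of Theorem~\ref{1tm} together with the behaviour of $T(\vartheta)$ described in the first paragraph.
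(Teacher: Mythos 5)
Most of your reduction is fine and, in fact, more explicit than the paper: the analysis of $T(\vartheta)$ from (\ref{8}), the patching of the fixed-space solutions of Theorem \ref{1tm} over the scale via (\ref{0o6}) and uniqueness, and the observation that $T(\vartheta(t))=t$, so that membership $k_t\in\mathcal{K}_{\vartheta(t)}$ is a genuine boundary statement and not a consequence of $k_t\in\bigcap_{\vartheta<\vartheta(t)}\mathcal{K}_\vartheta$. You have thereby located exactly the crux of Theorem \ref{1atm}. The problem is that this crux is left open in your proposal, and the route you sketch for it cannot succeed as stated. With the equal splitting of $[\vartheta(t),\vartheta_0]$ and the bound (\ref{C9}) one obtains precisely the paper's estimate (\ref{Lest}), so the majorant for the series (\ref{8CD}) in $\mathcal{K}_{\vartheta(t)}$ is $\sum_m \frac{(m/e)^m}{m!}\bigl(t/T(\vartheta(t))\bigr)^m\|k_0\|_{\vartheta_0}$; at the boundary $t=T(\vartheta(t))$ its terms are, by Stirling, of order $(2\pi m)^{-1/2}$, so this majorizing series \emph{diverges}. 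Hence ``verifying that the majorizing numerical series still converges at the boundary'' is impossible with exactly those ingredients; a strictly sharper estimate is required. One way to obtain it: in $\prod_{l}\|L^\Delta\|_{\vartheta_l\vartheta_{l+1}}$ do not bound every factor $\exp\bigl(c_\phi e^{-\vartheta_l}\bigr)$ by the worst one $\exp\bigl(c_\phi e^{-\vartheta}\bigr)$, but sum the exponents: $\sum_{l}e^{-\vartheta_l}\le m\,\bigl(e^{-\vartheta}-e^{-\vartheta_0}\bigr)/(\vartheta_0-\vartheta)$, which replaces $e^{-\vartheta}$ in (\ref{8}) by the strictly smaller mean value $\bigl(e^{-\vartheta}-e^{-\vartheta_0}\bigr)/(\vartheta_0-\vartheta)$ and hence gives a radius of convergence in $\mathcal{K}_{\vartheta(t)}$ strictly larger than $T(\vartheta(t))=t$; the boundary membership then follows. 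Without some such refinement (or another argument for the endpoint space), your proof establishes the evolution only in every $\mathcal{K}_\vartheta$ with $\vartheta<\vartheta(t)$, which is weaker than the statement.

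For comparison, the paper does not use your patching construction at all: Theorem \ref{1atm} is settled inside the proof of Theorem \ref{1tm} by the single remark that the partial sums (\ref{8CD}) converge ``also in $\mathcal{K}_{\vartheta(t)}$.'' That remark is just as silent about the endpoint issue as your proposal, so your diagnosis of where the real content lies is correct; but since you explicitly defer the decisive estimate and the estimate you plan to invoke fails exactly there, the proposal as written has a gap at the one step that distinguishes Theorem \ref{1atm} from a soft corollary of Theorem \ref{1tm}.
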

\vskip.1cm \noindent
{\it Proof of Theorem \ref{1tm}.}
Let $\vartheta \in \mathbb{R}$ be fixed. Set
\begin{equation}
  \label{DomL}
{\rm Dom} (L^\Delta) = \{ k \in \mathcal{K}_\vartheta: L^\Delta k \in \mathcal{K}_\vartheta\}.
\end{equation}
Given $k$, let $L^\Delta_1 k$ and $L^\Delta_2 k$
denote the first and the second summands in (\ref{C5c}), respectively. Then,
for  $\vartheta \leq \vartheta' <
\vartheta''$
and $k\in\mathcal K_{\vartheta''}$, we have
\begin{eqnarray*}
%\label{ka}
% \begin{split}
& &|(L^{\Delta}_1k)(\eta)|e^{\vartheta'|\eta|}\\[.2cm] & &  \quad  \le
\sum_{y\in\eta}\int_{\mathbb R^d}dx\text{ }a(x-y)\int_{\Gamma_0}\lambda(d\xi)|k(\eta\backslash y\cup x\cup\xi)|
\exp(\vartheta'|\eta\cup\xi|) \\[.2cm]
& &\quad\times \exp(-\vartheta''|\eta\cup\xi|)e(|t_y|,\xi)|e^{\vartheta'|\eta|}\\[.2cm]
& & \quad  \le\sum_{y\in\eta}\int_{\mathbb R^d}dx\text{ }a(x-y)\int_{\Gamma_0}\lambda(d\xi)\|k\|_{\vartheta''}e^{-\vartheta''|\xi|}\\[.2cm]
& &\quad\times e(|t_y|,\xi)|e^{-|\eta|(\vartheta''-\vartheta')}\\[.2cm]
& & \quad =\|k\|_{\vartheta''} \alpha \exp\big(e^{-\vartheta''}c_{\phi}\big)|\eta|e^{-|\eta|(\vartheta''-\vartheta'')}\\[.2cm]
& & \quad \le\|k\|_{\vartheta''} \alpha \exp\big( e^{-\vartheta''}c_{\phi}\big)\frac{1}{e(\vartheta''-\vartheta')},
% \end{split}
\end{eqnarray*}
which holds for $\lambda$-almost all $\eta\in \Gamma_0$.
In the last line we used (\ref{5A}) and the following inequality
\begin{displaymath}
 \tau e^{-\delta \tau}\le {1}/{e\delta}, \qquad {\rm for} \ {\rm all} \ \ \tau\ge 0, \quad \delta >0.
\end{displaymath}
Similarly one estimates also $L^\Delta_2 k$, which finally yields
\begin{equation}
  \label{0o10}
\|L^\Delta k \|_{\vartheta'} \leq \frac{2\alpha}{e(\vartheta'' -
\vartheta')} \exp\left(c_\phi e^{-\vartheta''} \right)
\|k\|_{\vartheta''}, \qquad \vartheta''
> \vartheta',
\end{equation}
and hence, cf. (\ref{0o6}) and (\ref{DomL}),
\begin{equation}
  \label{0o7}
 {\rm Dom}(L^\Delta) \supset \mathcal{K}_{\vartheta'}, \qquad {\rm for} \ \ {\rm all} \ \  \vartheta' > \vartheta.
\end{equation}
By (\ref{0o10}), ${L}^\Delta$ can be defined as a bounded linear operator
${L}^\Delta:\mathcal{K}_{\vartheta''} \rightarrow
{\mathcal{K}}_{\vartheta'}$, $\vartheta' <
 \vartheta''$, with norm
\begin{equation}
  \label{C9}
  \|L^\Delta  \|_{\vartheta'' \vartheta'} \leq \frac{2\alpha}{e(\vartheta'' - \vartheta')}
\exp\left(c_\phi e^{-\vartheta''} \right).
\end{equation}
Given $k_0\in \mathcal{K}_{\vartheta_0}$, we seek the solution of (\ref{R4}) as the limit of the sequence
$\{k_{t,n}\}_{n\in \mathbb{N}_0 }\subset \mathcal{K}_\vartheta$,
where $k_{t,0} = k_0$ and
\begin{equation*}
%\label{it}
k_{t,n} = k_0 + \int_0^t L^{\Delta} k_{s,n-1} d s, \qquad n \in \mathbb{N}.
\end{equation*}
The latter can be iterated to yield
\begin{equation}
 \label{8CD}
k_{t,n} = k_0 + \sum_{m=1}^n \frac{1}{m!}t^m
\big(L^{\Delta}\big)^m k_0.
\end{equation}
Then, for $n,p\in \mathbb{N}$, we have
\begin{equation}
  \label{0o11}
 \|k_{t,n} - k_{t,n+p}\|_\vartheta \leq  \sum_{m=n+1}^{n+p} \frac{t^m}{m!} \|(L^\Delta)^m\|_{\vartheta_0 \vartheta} \|k_0\|_{\vartheta_0}.
\end{equation}
For a given $m\in \mathbb{N}$ and $l = 0, \dots , m$, set
$\vartheta_l = \vartheta + (m- l )\epsilon$, $\epsilon = (\vartheta_0 -
\vartheta)/m$. Then by (\ref{C9}) and (\ref{8}), we get
\begin{eqnarray}
\label{Lest}
 \|(L^\Delta)^m\|_{\vartheta_0 \vartheta}  & \leq & \prod_{l=0}^m \|L^\Delta \|_{\vartheta_{l}\vartheta_{l+1}}  \leq \left(\frac{2 \alpha m\exp[c_\phi e^{-\vartheta}]}{e (\vartheta_0 - \vartheta)} \right)^m\\[.2cm]
 & = & \left( \frac{m}{e}\right)^m \frac{1}{[T(\vartheta)]^m}. \nonumber
\end{eqnarray}
Applying the latter estimate in (\ref{0o11}) we obtain that the sequence
$\{k^{(n)}_t\}_{n\in \mathbb{N}}$ converges in
$\mathcal{K}_\vartheta$ for $|t|< T$, and hence is differentiable, even real analytic, in $\mathcal{K}_\vartheta$ on the latter set.
From the proof above we see that $\{k^{(n)}_t\}_{n\in \mathbb{N}}$ converges
also in $\mathcal{K}_{\vartheta(t)}$ with $\vartheta(t)$ as in (\ref{T}), which proves Theorem \ref{1atm}.
The latter by (\ref{0o7}) yields  that $k_t \in {\rm Dom}(L^\Delta)$ for all $t< T(\vartheta)$, which completes the proof of the existence.
The uniqueness readily follows by the analyticity just mentioned.
$\square$ \vskip.1cm \noindent
Let now $k_t$, as a function of $\eta \in \Gamma_0$, be continuous. Then instead of (\ref{2D}) we
consider
\begin{equation*}
  %\label{6D}
\widetilde{\mathcal{K}}_\vartheta = \{k \in C (\Gamma_0\rightarrow
\mathbb{R}): \|k\|_\vartheta < \infty\},
\end{equation*}
endowed with the same norm as in (\ref{o5}), (\ref{0o5}).
\begin{corollary}
  \label{o1rk}
Let  $\vartheta$, $T$, and $\vartheta_0$ be as in Theorem \ref{1tm}. Suppose
in addition that the function $\phi$  is continuous. Then  the
problem (\ref{R4})
 with $k_0\in \widetilde{\mathcal{K}}_{\vartheta_0}$
has a unique classical solution $k_t \in
\widetilde{\mathcal{K}}_{\vartheta}$ on $[0, T)$.
\end{corollary}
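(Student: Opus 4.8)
The plan is to rerun the proof of Theorem \ref{1tm} inside the subspace $\widetilde{\mathcal{K}}_\vartheta$, which I would first argue is a \emph{closed} subspace of $\mathcal{K}_\vartheta$ and hence itself a Banach space with the norm (\ref{o5}): if $\{k_j\}$ is $\|\cdot\|_\vartheta$-Cauchy then, for each $n$, $k_j^{(n)}\to k^{(n)}$ uniformly on $\Gamma^{(n)}$, so the limit $k^{(n)}$ is continuous, and since the topology on $\Gamma_0$ is the disjoint-union topology (\ref{2U}) the limit $k$ is continuous on $\Gamma_0$. The scale property $\widetilde{\mathcal{K}}_\vartheta\subset\widetilde{\mathcal{K}}_{\vartheta'}$ for $\vartheta>\vartheta'$ is inherited from (\ref{0o6}), and because the norm is unchanged the operator-norm estimate (\ref{C9}) holds automatically for the restriction of $L^\Delta$ to $\widetilde{\mathcal{K}}_{\vartheta''}$, \emph{provided} one knows that this restriction in fact maps into $\widetilde{\mathcal{K}}_{\vartheta'}$, i.e.\ that $L^\Delta$ preserves continuity.

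Establishing that preservation is the one genuinely new ingredient, and it is exactly where the continuity of $\phi$ enters. With $\phi$ continuous, the functions $t_y$ and $\tau_y$ of (\ref{18A}) are continuous in their arguments, so for a fixed $n$ and $\eta$ ranging over $\Gamma^{(n)}$ the integrand in each of the finitely many summands of (\ref{C5c}) depends continuously on $\eta$ for Lebesgue-a.e.\ $x$ and $\lambda$-a.e.\ $\xi$: a convergent sequence $\eta_j\to\eta$ in $\Gamma^{(n)}$ keeps $n$ points and avoids collisions in the limit, the a.e.\ value of the integration variable $x$ misses $\eta$, and $\lambda$-a.e.\ $\xi$ misses $\eta\cup x$, so $k$, being continuous, gives pointwise convergence of the integrands. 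To pass to the limit under the $dx$- and $\lambda(d\xi)$-integrals I would invoke exactly the majorants that produced (\ref{0o10}): $|k(\cdot)|\le\|k\|_{\vartheta''}e^{-\vartheta''|\cdot|}$, the bounds $0\le\tau_y\le1$ and $|t_y|\le1$, the integrability (\ref{C4}) of $a$ together with its translation-continuity in $L^1$, and (\ref{5A}) applied to $e(|t_y|,\cdot)$; these furnish an integrable majorant uniform for $\eta$ in a compact subset of $\Gamma^{(n)}$. Dominated convergence then gives continuity of $L^\Delta k$ on each $\Gamma^{(n)}$, hence on $\Gamma_0$, so $L^\Delta:\widetilde{\mathcal{K}}_{\vartheta''}\to\widetilde{\mathcal{K}}_{\vartheta'}$ is bounded with norm (\ref{C9}).

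With these two facts in hand the rest is a transcription of the proof of Theorem \ref{1tm}. The Picard iterates $k_{t,n}=k_0+\sum_{m=1}^{n}\frac{t^m}{m!}(L^\Delta)^m k_0$ now lie in $\widetilde{\mathcal{K}}_\vartheta$, since $(L^\Delta)^m$ carries $\widetilde{\mathcal{K}}_{\vartheta_0}$ into $\widetilde{\mathcal{K}}_\vartheta$ through the intermediate spaces $\widetilde{\mathcal{K}}_{\vartheta_l}$ appearing in (\ref{Lest}); the estimates (\ref{0o11})--(\ref{Lest}) are unchanged and show $\{k_{t,n}\}$ is Cauchy in the Banach space $\widetilde{\mathcal{K}}_\vartheta$ for $t<T$, with limit $k_t$ real-analytic in $t$. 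As in the proof of Theorem \ref{1atm}, the series converges also in $\widetilde{\mathcal{K}}_{\vartheta(t)}$ with $\vartheta(t)>\vartheta$, so by the tilde-version of (\ref{0o7}) we get $L^\Delta k_t\in\widetilde{\mathcal{K}}_\vartheta$ and $k_t$ is a classical solution in $\widetilde{\mathcal{K}}_\vartheta$ on $[0,T)$. Uniqueness is free: any such solution is in particular a classical solution in $\mathcal{K}_\vartheta$, to which the uniqueness part of Theorem \ref{1tm} applies. The only step demanding real work is the continuity-preservation claim of the second paragraph; everything else copies the earlier argument.
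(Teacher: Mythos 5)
Your proposal is correct and is essentially the argument the paper intends: the corollary is stated right after the proof of Theorem \ref{1tm} with no separate proof, precisely because once one knows that $\widetilde{\mathcal{K}}_\vartheta$ is a Banach space for the norm (\ref{o5}) and that $L^\Delta$ maps continuous elements to continuous ones when $\phi$ is continuous, the estimates (\ref{0o10})--(\ref{Lest}) and the iteration (\ref{8CD}) transcribe verbatim, and uniqueness is inherited from $\mathcal{K}_\vartheta$. The only loose point is your claim of an integrable majorant ``uniform for $\eta$ in a compact subset'': since the dominating factor $e(|t_{y}|,\xi)$ moves with the points of $\eta_j$, one should instead pass to the limit in the $\lambda(d\xi)$-integral via the $L^1$-translation continuity of $1-e^{-\phi}$ (or a generalized dominated convergence argument, the integrals $\int e(|t_{y_j}|,\xi)e^{-\vartheta''|\xi|}\lambda(d\xi)$ being all equal to $\exp(c_\phi e^{-\vartheta''})$), a one-line repair that does not affect the rest of the proof.
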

Now we consider (\ref{16A})
in the Banach space
\begin{equation*}
 %\label{BS}
\mathcal{G}_\vartheta = L^1 (\Gamma_0, e^{-\vartheta |\cdot|}d
\lambda),\qquad \vartheta \in \mathbb{R},
\end{equation*}
that is, $G\in \mathcal{G}_\vartheta$ if $\|G\|_\vartheta< \infty$, where
\begin{equation}
 \label{6}
\|G\|_\vartheta :=  \int_{\Gamma_0}
\exp(-\vartheta |\eta|) \left\vert G(\eta) \right\vert \lambda (d
\eta).
\end{equation}
\begin{theorem}
  \label{2tm}
Let $\vartheta_0$ $\vartheta$,  $T>0$ be as in (\ref{beta}). Then the Cauchy problem
(\ref{16A}) with $G_0 \in \mathcal{G}_{\vartheta}$ has a unique
classical solution $G_t\in \mathcal{G}_{\vartheta_0}$ on
$[0,T)$.
\end{theorem}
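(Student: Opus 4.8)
The plan is to prove Theorem~\ref{2tm} in complete parallel with Theorem~\ref{1tm}, exploiting the duality \eqref{C5d}--\eqref{dualL} together with a direct norm estimate for $\widehat{L}$ on the scale $\{\mathcal{G}_\vartheta\}$. The key observation is that, just as $L^\Delta$ acts as a bounded operator from a smaller space $\mathcal{K}_{\vartheta''}$ to a larger one $\mathcal{K}_{\vartheta'}$ with norm controlled by $(\vartheta''-\vartheta')^{-1}$, the operator $\widehat L$ given in \eqref{C5b} should act as a bounded operator $\widehat L:\mathcal{G}_{\vartheta'}\to\mathcal{G}_{\vartheta''}$ for $\vartheta'<\vartheta''$, with
\begin{equation*}
\|\widehat L\|_{\vartheta'\vartheta''}\leq \frac{2\alpha}{e(\vartheta''-\vartheta')}\exp\!\left(c_\phi e^{-\vartheta'}\right).
\end{equation*}
Note the roles of the indices are reversed relative to \eqref{C9}: the dual evolution moves in the direction of \emph{increasing} $\vartheta$, which is exactly why the solution starts in $\mathcal{G}_\vartheta$ and ends up in the larger space $\mathcal{G}_{\vartheta_0}$, matching the statement. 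With such a bound in hand, the Ovcyannikov-type scheme runs verbatim: one defines the Picard iterates $G_{t,n}=G_0+\int_0^t\widehat L G_{s,n-1}\,ds$, iterates them to $G_{t,n}=G_0+\sum_{m=1}^n\frac{t^m}{m!}(\widehat L)^m G_0$, splits the interval $[\vartheta,\vartheta_0]$ into $m$ equal pieces to estimate $\|(\widehat L)^m\|_{\vartheta\vartheta_0}\leq (m/e)^m[T(\vartheta)]^{-m}$ as in \eqref{Lest}, and concludes convergence of the series in $\mathcal{G}_{\vartheta_0}$ for $t<T$, real-analyticity, and hence existence and uniqueness of the classical solution.

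The first concrete step is therefore to derive the norm estimate for $\widehat L$. Starting from \eqref{C5b}, one bounds $|(\widehat L G)(\eta)|$ by replacing $t_y,\tau_y$ with their absolute values $|t_y|\leq 1$ and $\tau_y=e^{-\phi}\leq 1$, and $[G(\xi\setminus x\cup y)-G(\xi)]$ by $|G(\xi\setminus x\cup y)|+|G(\xi)|$. Then one integrates $e^{-\vartheta''|\eta|}|(\widehat L G)(\eta)|$ over $\Gamma_0$ with respect to $\lambda$. The sum $\sum_{\xi\subset\eta}(\cdots)e(t_y,\eta\setminus\xi)$ is handled by the integration-by-parts formula \eqref{12A}, which converts the sum over subsets $\xi\subset\eta$ into a double Lebesgue--Poisson integral over $(\xi,\eta\setminus\xi)$; the $\eta\setminus\xi$-integral produces the factor $\exp(e^{-\vartheta''}c_\phi)$ via \eqref{5A} applied to $e(|t_y|e^{-\vartheta''(\cdot)},\cdot)$ --- here the exponent weight $e^{-\vartheta''|\eta\setminus\xi|}$ combines with $|t_y|$ and $\int(1-e^{-\phi})dx=c_\phi$. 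Actually, a cleaner route avoiding \eqref{12A} entirely is to obtain the $\widehat L$ estimate directly \emph{from} \eqref{0o10} by duality: for $G\in\mathcal{G}_{\vartheta'}$ and $k\in\mathcal{K}_{-\vartheta''}$ one has $|\langle\!\langle \widehat L G,k\rangle\!\rangle|=|\langle\!\langle G,L^\Delta k\rangle\!\rangle|\leq\|G\|_{\vartheta'}\|L^\Delta k\|_{-\vartheta'}\leq\|G\|_{\vartheta'}\cdot\frac{2\alpha}{e(\vartheta''-\vartheta')}\exp(c_\phi e^{\vartheta''})\|k\|_{-\vartheta''}$, using that $\|G\cdot h\|_{L^1(\lambda)}\leq\|G\|_{\vartheta'}\|h\|_{-\vartheta'}$-type pairing identifies $\mathcal{G}_{\vartheta'}$ with a predual piece of $\mathcal{K}_{-\vartheta'}$; taking the supremum over $k$ yields the desired bound on $\|\widehat L G\|_{\vartheta''}$. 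I would present the direct computation as primary since it keeps the paper self-contained, but mention the duality check as a consistency confirmation.

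The remainder is then a transcription. With $\vartheta_0=\vartheta+2\alpha T\exp(c_\phi e^{-\vartheta})$ as in \eqref{beta} --- which is consistent because the critical exponential in the $\widehat L$ estimate is $\exp(c_\phi e^{-\vartheta'})$ and on the relevant range $\vartheta'\in[\vartheta,\vartheta_0]$ one bounds it by its value at the left endpoint $\exp(c_\phi e^{-\vartheta})$, exactly the quantity appearing in \eqref{beta} and \eqref{8} --- the telescoping product over the partition $\vartheta_l=\vartheta+\ell\epsilon$, $\epsilon=(\vartheta_0-\vartheta)/m$, gives $\|(\widehat L)^m\|_{\vartheta\vartheta_0}\leq\left(\frac{2\alpha m\exp[c_\phi e^{-\vartheta}]}{e(\vartheta_0-\vartheta)}\right)^m=(m/e)^m[T(\vartheta)]^{-m}$ with $T(\vartheta)$ as in \eqref{8}. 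Hence $\sum_m\frac{t^m}{m!}\|(\widehat L)^m\|_{\vartheta\vartheta_0}\|G_0\|_\vartheta<\infty$ for $|t|<T(\vartheta)=T$ by Stirling, so $G_{t,n}\to G_t$ in $\mathcal{G}_{\vartheta_0}$, the limit is real-analytic in $t$ on $(-T,T)$, solves \eqref{16A}, lies in $\mathrm{Dom}(\widehat L)\supset\mathcal{G}_{\vartheta'}$ for any $\vartheta'>\vartheta_0$ (reversed analogue of \eqref{0o7}), and is the unique such solution by analyticity. The main --- indeed only --- obstacle is getting the $\widehat L$ norm estimate right, in particular tracking which endpoint of $[\vartheta',\vartheta'']$ the exponential weight should be evaluated at and confirming that the resulting $T(\vartheta)$ coincides with \eqref{8}; once that bookkeeping is done, everything else is identical to the proof of Theorem~\ref{1tm} and requires no new idea.
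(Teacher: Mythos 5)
Your proposal is correct and follows essentially the same route as the paper's own proof: Picard/Taylor iterates as in (\ref{8C}), a bound on $\widehat{L}:\mathcal{G}_{\vartheta'}\to\mathcal{G}_{\vartheta''}$ obtained from (\ref{C5b}) via (\ref{12A}) and (\ref{5A}), the Ovcyannikov-type telescoping yielding $\|(\widehat{L})^m\|_{\vartheta\vartheta_0}\leq (m/e)^m [T(\vartheta)]^{-m}$, convergence of the series for $t<T$, and uniqueness by analyticity. Only minor bookkeeping remarks: the direct computation actually gives the factor $\exp\left(c_\phi e^{-\vartheta''}\right)$ (target index), so your $\exp\left(c_\phi e^{-\vartheta'}\right)$ is a valid though slightly weaker bound, and both are dominated on $[\vartheta,\vartheta_0]$ by $\exp\left(c_\phi e^{-\vartheta}\right)$ as needed; the domain inclusion should read $\mathrm{Dom}(\widehat{L})\supset\mathcal{G}_{\vartheta'}$ for $\vartheta'<\vartheta_0$; and in your duality aside the pairing (\ref{19A}) couples $\mathcal{G}_{\vartheta}$ with $\mathcal{K}_{\vartheta}$ of the \emph{same} index (since $|k(\eta)|\leq \|k\|_{\vartheta}e^{-\vartheta|\eta|}$), not with $\mathcal{K}_{-\vartheta}$ --- none of which affects the argument, since you rightly take the direct estimate as primary.
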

\begin{proof}
As above, we obtain the solution of (\ref{16A}) as the limit of the sequence
$\{G_{t,n}\}_{n\in \mathbb{N}_0 }\subset \mathcal{G}_{\vartheta_0}$,
where $G^{(0)}_t = G_0$ and
\begin{equation}
  \label{8C}
G_{t,n} = G_0 + \sum_{m=1}^n \frac{1}{m!}t^m
\widehat{L}^m G_0.
\end{equation}
For the norm (\ref{6}), from (\ref{C5b}) similarly as above by (\ref{12A}) we get
\begin{equation*}
 % \label{2.6}
\|  \widehat{L}G \|_{\vartheta''} \leq  \frac{2\alpha}{e(\vartheta'' -
\vartheta')} \exp\left(c_\phi e^{-\vartheta''} \right)
\|G\|_{\vartheta'}.
\end{equation*}
This means that $\widehat{L}$ can be defined as a bounded linear
operator $\widehat{L}:\mathcal{G}_{\vartheta'} \rightarrow
\mathcal{G}_{\vartheta''}$ with norm
\begin{equation*}
 % \label{2.7}
\|\widehat{L}\|_{\vartheta' \vartheta''} \leq
\frac{2\alpha}{e(\vartheta'' - \vartheta')} \exp\left(c_\phi
e^{-\vartheta} \right).
\end{equation*}
Then we apply the latter estimate in (\ref{8C}) and obtain, for any $p,n\in\mathbb N$,
\begin{equation*}
%\label{C6}
\| G_{t,n} - G_{t,n+p}\|_\vartheta \leq
\sum_{m=n+1}^{n+p} \frac{(m/e)^m}{m!} \left( \frac{t}{T}\right)^m.
\end{equation*}
The latter estimate yields the proof, as in the case of Theorem \ref{1tm}.
\end{proof}
\begin{corollary}
  \label{0oco}
Let $k_0$, $k_t$, and $G_0$, $G_t$ be as in Theorem \ref{1tm} and Theorem \ref{2tm}, respectively.
Then, cf. (\ref{C5d}), the following holds
\begin{equation}
  \label{0o12}
 \langle \! \langle G_0 , k_t \rangle \! \rangle = \langle \! \langle G_t , k_0 \rangle \! \rangle.
\end{equation}
That is, the evolutions described by these Theorems are dual.
\end{corollary}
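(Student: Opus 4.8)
The plan is to exploit the explicit power--series representations of the two evolutions, namely (\ref{8CD}) for $k_t$ and (\ref{8C}) for $G_t$, together with the generator duality (\ref{dualL}) and the elementary fact that the bilinear form (\ref{19A}) is bounded on $\mathcal{G}_\vartheta \times \mathcal{K}_\vartheta$ for each $\vartheta$. Indeed, from (\ref{o5}) one has $|k(\eta)| \le \|k\|_\vartheta\, e^{-\vartheta|\eta|}$ for $\lambda$-almost all $\eta\in\Gamma_0$, whence by (\ref{6})
$|\langle\!\langle G, k\rangle\!\rangle| \le \|G\|_\vartheta \|k\|_\vartheta$. First I would record this estimate and observe that both sides of (\ref{0o12}) are then well defined: on the left $k_t\in\mathcal{K}_\vartheta$ pairs with $G_0\in\mathcal{G}_\vartheta$, and on the right $G_t\in\mathcal{G}_{\vartheta_0}$ pairs with $k_0\in\mathcal{K}_{\vartheta_0}$.

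Next I would establish the iterated identity $\langle\!\langle G_0, (L^\Delta)^m k_0\rangle\!\rangle = \langle\!\langle \widehat{L}^m G_0, k_0\rangle\!\rangle$ for every $m\in\mathbb{N}$. To this end I introduce equally spaced intermediate indices $\vartheta = \vartheta^{(m)} < \vartheta^{(m-1)} < \cdots < \vartheta^{(0)} = \vartheta_0$. Along this chain $L^\Delta$ acts as a bounded operator $\mathcal{K}_{\vartheta^{(j-1)}} \to \mathcal{K}_{\vartheta^{(j)}}$, cf. (\ref{C9}), and $\widehat{L}$ acts as a bounded operator $\mathcal{G}_{\vartheta^{(j)}} \to \mathcal{G}_{\vartheta^{(j-1)}}$, cf. the proof of Theorem \ref{2tm}; in particular $(L^\Delta)^m k_0\in\mathcal{K}_\vartheta$ and $\widehat{L}^m G_0\in\mathcal{G}_{\vartheta_0}$, so every intermediate pairing below is finite. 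Writing $(L^\Delta)^m k_0 = L^\Delta[(L^\Delta)^{m-1}k_0]$ and applying (\ref{dualL}) on the matched pair $(\mathcal{G}_{\vartheta^{(m)}},\mathcal{K}_{\vartheta^{(m-1)}})$ moves one factor onto the $G$-side, turning it into $\widehat L$; iterating this $m$ times along the chain yields the claimed identity. The legitimacy of (\ref{dualL}) on these pairs of scale spaces is what needs checking: it follows because all the integrals in (\ref{C5c}) and (\ref{C5b}) are absolutely convergent there, so that the integration rule (\ref{12A}) and Fubini's theorem apply, and otherwise it can be deduced by continuity of the pairing from $B_{\rm bs}(\Gamma_0)$, which is dense in each $\mathcal{G}_\vartheta$.

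Finally I would pass to the limit in the partial sums. By the proof of Theorem \ref{1tm} (see (\ref{8CD})--(\ref{Lest})), $k_{t,n}\to k_t$ in $\mathcal{K}_\vartheta$ for $t<T$, so by continuity of $\langle\!\langle G_0,\cdot\rangle\!\rangle$ on $\mathcal{K}_\vartheta$ we get $\langle\!\langle G_0, k_t\rangle\!\rangle = \sum_{m\ge 0}\frac{t^m}{m!}\langle\!\langle G_0,(L^\Delta)^m k_0\rangle\!\rangle$; likewise, by Theorem \ref{2tm} and (\ref{8C}), $G_{t,n}\to G_t$ in $\mathcal{G}_{\vartheta_0}$ and $\langle\!\langle G_t, k_0\rangle\!\rangle = \sum_{m\ge 0}\frac{t^m}{m!}\langle\!\langle \widehat{L}^m G_0,k_0\rangle\!\rangle$. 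By the identity of the previous paragraph the two series coincide term by term, which gives (\ref{0o12}). I expect the only genuine obstacle to be the justification of (\ref{dualL}) on the whole scale rather than merely on $B_{\rm bs}(\Gamma_0)$ — i.e. controlling the Fubini-type rearrangements in (\ref{C5c})--(\ref{C5b}) uniformly — but this is already contained in the norm bounds obtained in the proofs of Theorems \ref{1tm} and \ref{2tm}, so no new estimate is required.
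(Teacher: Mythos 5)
Your proposal is correct and takes essentially the same route as the paper: the paper also obtains $\langle\!\langle G_0,k_{t,n}\rangle\!\rangle=\langle\!\langle G_{t,n},k_0\rangle\!\rangle$ by applying (\ref{dualL}) termwise to the expansions (\ref{8CD}), (\ref{8C}) and then passes to the limit $n\to+\infty$ using the norm convergence of both sequences and the boundedness of the pairing. The intermediate-space bookkeeping along the scale that you spell out is precisely what the paper leaves implicit, so no genuinely different argument is involved.
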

\begin{proof}
By  (\ref{dualL}) and by (\ref{8CD}) and (\ref{8C}), we see that, for all $n\in \mathbb{N}$,
\[
 \langle \! \langle G_0 , k_{t,n} \rangle \! \rangle = \langle \! \langle G_{t,n} , k_0 \rangle \! \rangle.
\]
Then (\ref{0o12}) is obtained from the latter by passing to the limit $n\to +\infty$, since we have the norm-convergences
of the sequences $\{k_{t,n}\}$ and $\{G_{t,n}\}$.
\end{proof}

\subsection{The evolution of $\Gamma_0$-states}

We recall that the set of finite configurations $ \Gamma_0$, cf. (\ref{2U}), is a measurable subset of $\Gamma$.
By a $\Gamma_0$-state we mean  a state $\mu \in \mathcal{M}^1(\Gamma)$ such that $\mu (\Gamma_0) =1$.
That is, in a $\Gamma_0$-state the system consists of a finite number of particles, but this number is random.
Each $\Gamma_0$-state  can be redefined as a probability measure on $(\Gamma_0, \mathcal{B}(\Gamma_0))$, cf.
(\ref{K2}) and (\ref{0K2}). The action  of the `generator' in (\ref{R1}) on $\Gamma_0$-states can be written down explicitly.
Namely, for such a state $\mu$ and $A \in \mathcal{B}(\Gamma_0)$,
\begin{equation}
  \label{0K3}
 (L^* \mu)(A) = - \int_{\Gamma_0} \Omega(\Gamma_0, \eta) \mathbb{I}_A (\eta) \mu(d\eta) +
 \int_{\Gamma_0} \Omega(A, \eta)  \mu(d\eta),
\end{equation}
where, cf. (\ref{C3}) and (\ref{C5}),
\begin{equation}
  \label{0K4}
\Omega (A, \eta) = \sum_{x\in \eta} \int_{\mathbb{R}^d} a(x-y) \exp( - E^\phi (y, \eta))\mathbb{I}_{A}(\eta \setminus x \cup y) dy,
\end{equation}
which is a measure kernel on $(\Gamma_0,\mathcal{B}(\Gamma_0))$. That is, $\Omega (\cdot, \eta)$ is a  measure for all $\eta \in \Gamma_0$, and
$\Omega (A, \cdot)$ is $\mathcal{B}(\Gamma_0)$-measurable for all $A \in \mathcal{B}(\Gamma_0)$.
Note that
\begin{equation}
  \label{0K5}
\Omega (\Gamma_0, \eta) =  \sum_{x\in \eta} \int_{\mathbb{R}^d} a(x-y) \exp( - E^\phi (y, \eta)) dy \leq \alpha |\eta|,
\end{equation}
which is obtained by (\ref{C4}) and the positivity of $\phi$.

Let $\mathcal{M}(\Gamma_0)$ be the Banach space of all signed measures on
$(\Gamma_0,\mathcal{B}(\Gamma_0))$ which have bounded variation.
For each $\mu\in\mathcal{M}(\Gamma_0)$, there exist $\beta_{\pm} \geq 0$ and probability measures $\mu_{\pm}$ such that
\begin{equation}
  \label{y01}
\mu= \beta_{+} \mu_{+} - \beta_{-} \mu_{-}, \qquad {\rm and } \qquad  \| \mu \| = \beta_{+} + \beta_{-}.
\end{equation}
Let $\mathcal{M}_{+} (\Gamma_0)$ be the cone of positive elements of $\mathcal{M}(\Gamma_0)$, for which
$\|\mu \| = \mu(\Gamma_0)$.
Then we define, cf. (\ref{0K5}),
\begin{equation}
  \label{0K6}
{\rm Dom} (L^*) = \{ \mu \in \mathcal{M}(\Gamma_0): \Omega (\Gamma_0, \cdot )\mu \in \mathcal{M}(\Gamma_0)\}.
\end{equation}
Recall that a $C_0$-semigroup  $\{S_\mu (t)\}_{t\geq 0}$ of bounded operators
in $\mathcal{M}(\Gamma_0)$ is called {\it stochastic} if each $S_\mu (t)$, $t>0$, leaves the cone $\mathcal{M}_{+}(\Gamma_0)$ invariant, and
$\|S_\mu (t)\mu\| =1$ whenever $\|\mu\|=1$. Our aim is to show that the problem (\ref{R1}) has a solution in the form
\begin{equation}
 \label{yyz}
\mu_t = S_\mu (t) \mu_0,
\end{equation}
where $\{S_\mu (t)\}_{t\geq 0}$ is a stochastic semigroup in $\mathcal{M}(\Gamma_0)$, that leaves invariant important subspaces of $\mathcal{M}(\Gamma_0)$.
For a measurable $b: \Gamma_0 \to \mathbb{R}_{+}$, we set
\begin{equation*}
  %\label{yy5}
 \mathcal{M}_b (\Gamma_0) = \{ \mu \in \mathcal{M}(\Gamma_0):  \mu_{\pm} (b) < \infty \},
\end{equation*}
where $\mu_{\pm}$ are the same as in (\ref{y01}) and
\begin{equation*}
  %\label{yy6}
  \mu_{\pm}(b) := \int_{\Gamma_0} b(\eta) \mu_{\pm}(d\eta).
\end{equation*}
The set $\mathcal{M}_b (\Gamma_0)$ can be equipped with the norm
\begin{equation*}
 % \label{yy6a}
 \|\mu\|_b = \alpha_{+} \mu_{+} (b) + \alpha_{-} \mu_{-} (b),
\end{equation*}
which turns it into a Banach space. Set
\begin{equation*}
 % \label{yy6b}
\mathcal{M}_{b,+} (\Gamma_0) = \mathcal{M}_{b} (\Gamma_0) \cap \mathcal{M}_{+} (\Gamma_0).
\end{equation*}
We also suppose that $b$ is such that the embedding $ \mathcal{M}_b (\Gamma_0) \hookrightarrow  \mathcal{M} (\Gamma_0)$
is dense and continuous. In the sequel, we use  \cite[Proposition 5.1]{[TV]}, which we rephrase as follows.
\begin{proposition}
  \label{TV1pn}
Suppose that $b$ and some positive $C$ and $\varepsilon$ obey the following estimate
\begin{equation}
\label{yy7}
\int_{\Gamma_0} \left(b(\xi) - b(\eta)\right)\Omega(d \xi, \eta) \leq C b(\eta) - \varepsilon \Omega (\Gamma_0, \eta),
\end{equation}
which holds for all $\eta \in \Gamma_0$. Then the closure of $L^*$ as in (\ref{0K3}), (\ref{0K4}) with domain (\ref{0K6}) generates a $C_0$-stochastic semigroup $\{S_\mu (t)\}_{t\geq 0}$,
which leaves $\mathcal{M}_{b} (\Gamma_0)$ invariant and induces a positive $C_0$-semigroup on $(\mathcal{M}_{b} (\Gamma_0), \|\cdot\|_b)$.
\end{proposition}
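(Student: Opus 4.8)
The plan is to realize $L^*$ as a Kato--Voigt perturbation of a multiplication operator, build the minimal substochastic semigroup, and then use the Lyapunov-type bound (\ref{yy7}) to upgrade it to a stochastic semigroup whose generator is exactly $\overline{L^*}$ and which leaves $\mathcal{M}_b(\Gamma_0)$ invariant. This is, after all, \cite[Proposition 5.1]{[TV]} specialized to the kernel $\Omega$, so the task really amounts to checking that the hypotheses of that result hold.

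First I would split $L^* = A + B$ on $\mathcal{M}(\Gamma_0)$, cf. (\ref{0K3}), where $(A\mu)(d\eta) = -\,\Omega(\Gamma_0,\eta)\,\mu(d\eta)$ is the operator of multiplication by $-\Omega(\Gamma_0,\cdot)$ with domain (\ref{0K6}), and $(B\mu)(\cdot) = \int_{\Gamma_0}\Omega(\cdot,\eta)\,\mu(d\eta)$ is a positive operator on ${\rm Dom}(A)$. Since $\Omega(\Gamma_0,\cdot)$ is nonnegative, measurable and finite on $\Gamma_0$ --- indeed $\Omega(\Gamma_0,\eta)\leq\alpha|\eta|$ by (\ref{0K5}) --- the operator $A$ generates the positive contraction $C_0$-semigroup $(e^{tA}\mu)(d\eta)=e^{-t\Omega(\Gamma_0,\eta)}\mu(d\eta)$, which is substochastic; moreover, for $\mu\in{\rm Dom}(A)\cap\mathcal{M}_{+}(\Gamma_0)$ the formal conservativity identity $\int_{\Gamma_0}(A\mu+B\mu)(d\eta)=0$ holds, since the two terms cancel at $A=\Gamma_0$. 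By the Kato--Voigt perturbation theorem there is then a smallest substochastic $C_0$-semigroup $\{S_\mu(t)\}_{t\geq 0}$ on $\mathcal{M}(\Gamma_0)$ whose generator $\mathcal{G}$ extends $A+B$.

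Next I would bring in the moment function $b$. Testing the formal generator against $b$ gives, for $\mu\in\mathcal{M}_{b,+}(\Gamma_0)\cap{\rm Dom}(L^*)$,
\[
\int_{\Gamma_0} b(\eta)\,(L^*\mu)(d\eta)=\int_{\Gamma_0}\Big(\int_{\Gamma_0}\big(b(\xi)-b(\eta)\big)\,\Omega(d\xi,\eta)\Big)\mu(d\eta)\leq C\,\mu(b)-\varepsilon\!\int_{\Gamma_0}\Omega(\Gamma_0,\eta)\,\mu(d\eta),
\]
which is precisely (\ref{yy7}) integrated against $\mu$. The term $C\,\mu(b)$ bounds the growth of the $b$-moment, while the negative term $-\varepsilon\int_{\Gamma_0}\Omega(\Gamma_0,\cdot)\,d\mu$ is the Thieme--Voigt honesty criterion: it forces $\|S_\mu(t)\mu\|=\|\mu\|$ for positive $\mu$, and it rules out a proper extension, so that $\mathcal{G}=\overline{L^*}$. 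I expect this to be the main obstacle --- one has to run the Thieme--Voigt argument carefully on the appropriate dense domains, with a suitable core, and with the estimate applied on the correct subspace --- but this is exactly what the proof of \cite[Proposition 5.1]{[TV]} does, so in practice the work reduces to verifying its hypotheses for our $A$, $B$ and $b$.

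Finally, for the invariance of $\mathcal{M}_b(\Gamma_0)$ and strong continuity there I would combine the displayed estimate with the semigroup property to obtain $\frac{d}{dt}\,(S_\mu(t)\mu)(b)\leq C\,(S_\mu(t)\mu)(b)$ for $\mu\in\mathcal{M}_{b,+}(\Gamma_0)$, apply Gronwall's inequality to get $(S_\mu(t)\mu)(b)\leq e^{Ct}\mu(b)$, and then decompose a general $\mu\in\mathcal{M}_b(\Gamma_0)$ as in (\ref{y01}) and use the positivity of $S_\mu(t)$ to conclude $\|S_\mu(t)\mu\|_b\leq e^{Ct}\|\mu\|_b$. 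Invariance follows at once; strong continuity of the restricted semigroup then follows from the density of a core in $(\mathcal{M}_b(\Gamma_0),\|\cdot\|_b)$ together with this uniform bound and the strong continuity already established on $\mathcal{M}(\Gamma_0)$.
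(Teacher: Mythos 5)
There is nothing to compare against here: the paper does not prove this proposition at all. It is introduced with ``we use \cite[Proposition 5.1]{[TV]}, which we rephrase as follows'', i.e.\ it is quoted from Thieme--Voigt, with the needed standing hypotheses (that $\Omega$ is a measure kernel, that $\Omega(\Gamma_0,\eta)\leq\alpha|\eta|$ as in (\ref{0K5}), and that $b$ is measurable with the embedding $\mathcal{M}_b(\Gamma_0)\hookrightarrow\mathcal{M}(\Gamma_0)$ dense and continuous) set up just before the statement. Your proposal is consistent with that: the splitting $L^*=A+B$ into the multiplication part $-\Omega(\Gamma_0,\cdot)$ and the positive kernel part, the Kato--Voigt minimal substochastic semigroup, the role of the $-\varepsilon\,\Omega(\Gamma_0,\cdot)$ term in (\ref{yy7}) as the honesty/closure criterion, and the Gronwall bound $(S_\mu(t)\mu)(b)\leq e^{Ct}\mu(b)$ for the invariance of $\mathcal{M}_b(\Gamma_0)$ is precisely the mechanism inside \cite{[TV]}, and at the genuinely hard step (that the generator is exactly $\overline{L^*}$ and the semigroup is stochastic rather than merely substochastic) you, like the paper, defer to \cite[Proposition 5.1]{[TV]}. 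Two small caveats if you wanted to make the sketch self-contained rather than a verification of hypotheses: the differential inequality $\frac{d}{dt}(S_\mu(t)\mu)(b)\leq C\,(S_\mu(t)\mu)(b)$ presupposes that $S_\mu(t)\mu$ already lies in $\mathcal{M}_b$ and in a suitable domain, so one must argue via truncations of $b$ (or resolvent estimates) to avoid circularity; and honesty does not follow from the formal conservativity identity alone --- that identity only gives the minimal semigroup, and it is exactly the extra $-\varepsilon\,\Omega(\Gamma_0,\cdot)$ margin in (\ref{yy7}) that excludes mass loss, which is the content of the Thieme--Voigt argument you would have to reproduce.
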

\begin{theorem}
  \label{0Ktm}
The problem (\ref{R1}) with a $\Gamma_0$-state $\mu_0$ has a unique classical solution in $\mathcal{M}_{+} (\Gamma_0)$  on $[0, +\infty)$, given by (\ref{yyz}) where $S_\mu (t)$, $t>0$, constitute the stochastic semigroup on $\mathcal{M} (\Gamma_0)$ generated by the closure of $L^*$ given in (\ref{0K3}), (\ref{0K4}), and (\ref{0K6}).
Moreover, for each $b$ which satisfies
\begin{equation}
  \label{0K9}
 b(\eta) = \delta (|\eta|) \geq \varepsilon \Omega(\Gamma_0, \eta), \qquad  {\rm for} \ \ {\rm all} \ \ \eta \in \Gamma_0,
\end{equation}
with some $\varepsilon>0$ and suitable $\delta:\mathbb{N} \to [0, +\infty)$, the mentioned semigroup $\{S_\mu (t)\}_{t\geq 0}$ leaves $\mathcal{M}_{b,+} (\Gamma_0)$ invariant.
\end{theorem}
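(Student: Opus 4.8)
The plan is to deduce the whole statement from Proposition~\ref{TV1pn}, the point being that the one structural feature of the Kawasaki generator which makes the Miyadera-type estimate (\ref{yy7}) essentially automatic is \emph{conservation of the number of particles}: the elementary transition $\eta\mapsto\eta\setminus x\cup y$ does not change $|\eta|$, so the kernel $\Omega(\cdot,\eta)$ in (\ref{0K4}) is concentrated on $\Gamma^{(|\eta|)}$ (the exceptional event $y\in\eta\setminus x$ has zero Lebesgue measure and does not contribute). Hence, for any $b$ of the admissible form $b(\eta)=\delta(|\eta|)$ one has $b(\xi)=b(\eta)$ for $\Omega(\cdot,\eta)$-a.a.\ $\xi$, and therefore
\[
\int_{\Gamma_0}\bigl(b(\xi)-b(\eta)\bigr)\,\Omega(d\xi,\eta)=0\qquad\text{for every }\eta\in\Gamma_0 .
\]
Thus (\ref{yy7}) reduces to the pointwise inequality $C\,b(\eta)\ge\varepsilon\,\Omega(\Gamma_0,\eta)$, which, in view of $\Omega(\Gamma_0,\eta)\le\alpha|\eta|$ from (\ref{0K5}), holds for a large class of $\delta$.

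To obtain the semigroup itself I would run Proposition~\ref{TV1pn} with $b(\eta)=1+\alpha|\eta|$, i.e.\ $\delta(n)=1+\alpha n$. Then $b\ge1$, so the embedding $\mathcal{M}_b(\Gamma_0)\hookrightarrow\mathcal{M}(\Gamma_0)$ is continuous, and it is dense because every $\mu\in\mathcal{M}(\Gamma_0)$ is approximated in variation norm by its restrictions to $\bigsqcup_{n=0}^N\Gamma^{(n)}$, which lie in $\mathcal{M}_b(\Gamma_0)$. Moreover $b(\eta)\ge\alpha|\eta|\ge\Omega(\Gamma_0,\eta)$, so (\ref{0K9}) holds with $\varepsilon=1$, and the display above gives (\ref{yy7}) with $C=\varepsilon=1$. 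Proposition~\ref{TV1pn} then yields a $C_0$-stochastic semigroup $\{S_\mu(t)\}_{t\ge0}$ on $\mathcal{M}(\Gamma_0)$ whose generator is the closure of $L^*$ with domain (\ref{0K6}). Setting $\mu_t=S_\mu(t)\mu_0$ as in (\ref{yyz}), standard $C_0$-semigroup theory gives that $t\mapsto\mu_t$ is the unique classical solution of (\ref{R1}) whenever $\mu_0$ lies in the domain of the generator, while the stochasticity of the semigroup guarantees that $\mu_t\in\mathcal{M}_+(\Gamma_0)$ with $\|\mu_t\|=1$ for all $t$, i.e.\ $\mu_t$ is again a probability measure supported on $\Gamma_0$; uniqueness within $\mathcal{M}_+(\Gamma_0)$ (indeed within $\mathcal{M}(\Gamma_0)$) follows from the Hille--Yosida characterization of the generator.

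For the ``moreover'' part, let $b$ be any function obeying (\ref{0K9}) with its own $\varepsilon>0$; replacing $b$ by $b+1$ if $\delta(0)=0$ (which changes neither (\ref{0K9}) nor the conservation identity) we may assume the embedding $\mathcal{M}_b\hookrightarrow\mathcal{M}$ is dense and continuous. The same two observations---vanishing of $\int_{\Gamma_0}(b(\xi)-b(\eta))\Omega(d\xi,\eta)$ and $b(\eta)\ge\varepsilon\,\Omega(\Gamma_0,\eta)$---show that (\ref{yy7}) holds with $C=1$ and this $\varepsilon$, so Proposition~\ref{TV1pn} applies again with this $b$. The decisive point is that the semigroup it now produces is generated by the \emph{same} operator, the closure of $L^*$ with domain (\ref{0K6}), which does not depend on $b$; since a generator determines its semigroup uniquely, this is exactly the semigroup $\{S_\mu(t)\}_{t\ge0}$ constructed above, and Proposition~\ref{TV1pn} asserts in addition that it leaves $\mathcal{M}_b(\Gamma_0)$ invariant. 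Intersecting with the invariant cone $\mathcal{M}_+(\Gamma_0)$ gives invariance of $\mathcal{M}_{b,+}(\Gamma_0)$, as claimed. The only genuinely delicate step in this scheme is this last book-keeping---making sure the operator being closed in Proposition~\ref{TV1pn} is genuinely independent of $b$, so that one and the same semigroup enjoys all the stated invariances; once the particle-number conservation of the dynamics is noticed, everything else is a routine application of the cited result.
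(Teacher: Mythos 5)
Your proposal is correct and follows essentially the same route as the paper: one observes that particle-number conservation of the Kawasaki kernel makes the left-hand side of (\ref{yy7}) vanish for any $b(\eta)=\delta(|\eta|)$ (the paper phrases this as ``computations based on (\ref{12A})'', you argue directly from the support of $\Omega(\cdot,\eta)$, which is the same conservativity fact), so that (\ref{yy7}) reduces to (\ref{0K9}) and Proposition \ref{TV1pn} yields the stochastic semigroup and the invariance of $\mathcal{M}_{b,+}(\Gamma_0)$. Your additional bookkeeping (the concrete choice $b(\eta)=1+\alpha|\eta|$, density of the embedding, and the $b$-independence of the generator) only makes explicit what the paper leaves implicit.
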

\begin{proof}
Computations based on (\ref{12A}) show that, for $b(\eta) = \delta (|\eta|)$, the left-hand side of
(\ref{yy7}) vanishes, which  reflects the fact the the Kawasaki dynamics is conservative. Then the proof follows by Proposition \ref{TV1pn}. The condition that $\mu_0 \in \mathcal{M}_{b,+} (\Gamma_0)$ with $b$ satisfying (\ref{0K9}) merely means that this $\mu_0$ is taken from the domain of $L^*$, cf. (\ref{0K6}).
\end{proof}
Suppose now that the initial state $\mu_0$ in (\ref{R1}) is supported on $\Gamma_0$ and is absolutely continuous with respect to the Lebesgue-Poisson measure $\lambda$.
Then
\begin{equation}
  \label{0K10}
 R_0 (\eta) = \frac{d \mu_0}{d \lambda} (\eta)
\end{equation}
is a positive element of unit norm of the Banach space
$\mathcal{R} :=L^1 (\Gamma_0, d \lambda)$. If $\mu_0 \in \mathcal{M}_{b,+} (\Gamma_0)$, then also
$R_0 \in \mathcal{R}_b := L^1 (\Gamma_0,b d \lambda)$. For $b$ obeying (\ref{0K9}), it is possible to show that, for any $t>0$, the solution $\mu_t$
as in Theorem \ref{0Ktm} has the Radon-Nikodym derivative $R_t$ which lies in $\mathcal{R}_b$. Furthermore, there exists a stochastic semigroup
$\{S_R(t)\}_{t\geq 0}$ on $\mathcal{R}$, which leaves invariant each $\mathcal{R}_b$ with $b$ obeying (\ref{0K9}), and such that
\begin{equation}
  \label{0K11}
  R_{t} = S_R (t) R_0, \qquad t \geq 0.
\end{equation}
The generator $L^\dagger$ of the semigroup $\{S_R(t)\}_{t\geq 0}$ has the following properties
\begin{eqnarray}
  \label{0K11a}
 & & \qquad \qquad \qquad {\rm Dom} (L^\dagger) \supset \mathcal{R}_b,\\[.3cm]
 \label{0K11b}
& & \int_{\Gamma_0} F (\eta) (L^\dagger R)(\eta) \lambda (d \eta) = \int_{\Gamma_0}(L F )(\eta) R(\eta) \lambda (d \eta),  %\nonumber
\end{eqnarray}
which holds for each $b$ obeying (\ref{0K9}), and for each
$R\in \mathcal{R}$ and each measurable $F:\Gamma_0 \to \mathbb{R}$ such that both integrals in (\ref{0K11b}) exist. Here $L$ is as in (\ref{C2}).
For each $t \geq 0$, the correlation function of $\mu_t$ and its Radon-Nikodym derivative satisfy, cf. (\ref{9AA}),
\begin{equation}
  \label{0K12}
k_{\mu_t} (\eta) = \int_{\Gamma_0} R_t (\eta \cup \xi) \lambda (d\xi).
\end{equation}
By this representation and by (\ref{12A}), we derive
\[
\int_{\mathbb{R}^d} k^{(1)}_{\mu_t}(x) dx = \int_{\Gamma_0}|\eta| R_t (\eta ) \lambda (d\eta),
\]
which yields the expected number of particles in state $\mu_t$. Note that we cannot expect now that $k_{\mu_t}$ lies in the spaces where we solve
(\ref{R4}), cf. Theorem \ref{1tm}.

\subsection{The evolution of states}

Recall that by $B_{\rm bs}(\Gamma_0)$ we denote the set of all bounded measurable functions $G:\Gamma_0 \to \mathbb{R}$
each of which is supported on a bounded $A$, cf. (\ref{6A}). Its subset  $B^+_{\rm bs}(\Gamma_0)$ is defined in (\ref{bbs}).

Given $\vartheta\in \mathbb{R}$, let $\mathcal{M}_\vartheta(\Gamma)$ stand for
the set of all $\mu\in \mathcal{M}^1_{\rm fm}(\Gamma)$, for which $k_\mu \in \mathcal{K}_\vartheta$,
see (\ref{9AA}) and (\ref{2D}). Let also $\mathcal{K}_\vartheta^+$ be
the set of all $k \in \mathcal{K}_\vartheta$ such that, cf. (\ref{10A}),
\begin{equation}
\label{Q1}
 \int_{\Gamma_0} G(\eta) k(\eta) \lambda (d \eta) \geq 0,
\end{equation}
which holds for every $G\in B^+_{\rm bs}(\Gamma_0)$.
Note that this property is `more than the mere positivity' as $B^+_{\rm bs}(\Gamma_0)$ can contain functions which take also negative values, see
(\ref{7A}) and (\ref{bbs}).
Then
in view of Proposition \ref{Tobpn}, the map $\mathcal{M}_\vartheta (\Gamma)\ni \mu \mapsto
k_\mu \in \mathcal{K}_\vartheta^+$ is a bijection as such $k_\mu$ certainly obeys
(\ref{11A}). In what follows, the evolution of states $\mu_0 \mapsto \mu_t$ is understood as the evolution of the corresponding correlation functions
$k_{\mu_0} \mapsto k_{\mu_t}$ obtained by solving the problem (\ref{R4}).
\begin{theorem}
 \label{Qtm}
Let $\vartheta_0\in\mathbb R$, $\vartheta$, and $T(\vartheta)$ be as in Theorem \ref{1tm} and in (\ref{8}), respectively, and let $\mu_0$ be in $\mathcal{M}_{\vartheta_0}(\Gamma)$.
Then the evolution described in Theorem \ref{1tm} with
 $k_0 =k_{\mu_0}$ leaves $\mathcal{K}_{\vartheta}^+$ invariant, which means that each $k_t$ is the correlation function of
 a unique $\mu_t \in \mathcal{M}_\vartheta(\Gamma)$. Hence, the evolution $k_{\mu_0}\mapsto k_t$, $t\in [0,T(\vartheta))$, determines the
 evolution of states
\begin{displaymath}
 \mathcal{M}_{\vartheta_0}(\Gamma) \ni
\mu_0 \mapsto \mu_t \in \mathcal{M}_\vartheta(\Gamma), \quad t\in [0, T(\vartheta)).
\end{displaymath}
\end{theorem}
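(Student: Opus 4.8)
The plan is to realize each $k_t$ as a limit of correlation functions of $\Gamma_0$-states, for which the positivity property \eqref{Q1} is automatic, and then to pass this property to the limit. First I would fix $\mu_0 \in \mathcal{M}_{\vartheta_0}(\Gamma)$ with correlation function $k_0 = k_{\mu_0} \in \mathcal{K}_{\vartheta_0}^+$, and approximate $\mu_0$ by $\Gamma_0$-states $\mu_0^{(N)}$ — for instance by conditioning $\mu_0$ on a box $\Lambda_N$ exhausting $\mathbb{R}^d$, i.e. setting $\mu_0^{(N)} = \mu_0 \circ p_{\Lambda_N}^{-1}$ viewed as a measure on $\Gamma_0$. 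One checks that $\mu_0^{(N)}$ lies in the domain $\mathcal{M}_{b,+}(\Gamma_0)$ of Theorem \ref{0Ktm} (with $b$ as in \eqref{0K9}), so that the stochastic semigroup $\{S_\mu(t)\}_{t\ge 0}$ yields a bona fide evolution $\mu_t^{(N)} = S_\mu(t)\mu_0^{(N)}$ of $\Gamma_0$-states, with associated Radon--Nikodym derivatives $R_t^{(N)}$ evolving via $\{S_R(t)\}_{t\ge 0}$ as in \eqref{0K11}. For these states, the correlation functions $k_t^{(N)} := k_{\mu_t^{(N)}}$ are given by \eqref{0K12} and satisfy \eqref{Q1} trivially, since $k_t^{(N)}(\eta) = \int_{\Gamma_0} R_t^{(N)}(\eta\cup\xi)\lambda(d\xi) \ge 0$ pointwise and, more to the point, $\langle\!\langle G, k_t^{(N)}\rangle\!\rangle = \int_{\Gamma_0}(KG)\,d\mu_t^{(N)} \ge 0$ for $G\in B_{\rm bs}^+(\Gamma_0)$ by the very definition \eqref{bbs}.

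The second step is to show that $k_t^{(N)} \to k_t$ in an appropriate sense, where $k_t$ is the solution from Theorem \ref{1tm}. The natural route is: (i) $k_0^{(N)} \to k_0$ as $N\to\infty$, and one should arrange the approximation so that the $k_0^{(N)}$ are uniformly bounded in $\mathcal{K}_{\vartheta_0}$ and converge pointwise (or at least $\langle\!\langle G, k_0^{(N)}\rangle\!\rangle \to \langle\!\langle G, k_0\rangle\!\rangle$ for all $G\in B_{\rm bs}(\Gamma_0)$); (ii) the equations satisfied by $k_t^{(N)}$ should, at least when tested against $G\in B_{\rm bs}(\Gamma_0)$, be the dual form $\frac{d}{dt}\langle\!\langle G, k_t^{(N)}\rangle\!\rangle = \langle\!\langle \widehat{L}G, k_t^{(N)}\rangle\!\rangle$, which follows from \eqref{0K11b} after the combinatoric identity relating $L$, $L^\Delta$, and $\widehat{L}$; (iii) using the duality of Corollary \ref{0oco} and the series representations \eqref{8CD}, \eqref{8C}, one gets $\langle\!\langle G, k_t^{(N)}\rangle\!\rangle = \langle\!\langle G_t, k_0^{(N)}\rangle\!\rangle$ with $G_t$ the solution of \eqref{16A}, and letting $N\to\infty$ yields $\langle\!\langle G, k_t^{(N)}\rangle\!\rangle \to \langle\!\langle G_t, k_0\rangle\!\rangle = \langle\!\langle G, k_t\rangle\!\rangle$ by Corollary \ref{0oco} again. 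Since each term in the sequence has $\langle\!\langle G, k_t^{(N)}\rangle\!\rangle \ge 0$ for $G\in B_{\rm bs}^+(\Gamma_0)$, the limit inherits $\langle\!\langle G, k_t\rangle\!\rangle \ge 0$, i.e. $k_t \in \mathcal{K}_\vartheta^+$. Finally, since $k_t\in\mathcal{K}_\vartheta$ obeys \eqref{11A} with $C_R \equiv e^{-\vartheta}$ (locally integrable), Proposition \ref{Tobpn} gives a \emph{unique} $\mu_t \in \mathcal{M}_{\rm fm}^1(\Gamma)$ with $k_{\mu_t} = k_t$, and $\mu_t \in \mathcal{M}_\vartheta(\Gamma)$ by construction; the bijectivity of $\mu\mapsto k_\mu$ on $\mathcal{M}_\vartheta(\Gamma)$ noted before the theorem then gives the claimed evolution of states.

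The main obstacle I expect is step (ii)--(iii): reconciling the two a priori different dynamics. The finite-volume $\Gamma_0$-evolution $R_t^{(N)}$ comes from the Miyadera-type semigroup of Theorem \ref{0Ktm}, whose correlation functions $k_t^{(N)}$ live in $L^1$-type spaces, \emph{not} in the scale $\mathcal{K}_\vartheta$ (as the remark after \eqref{0K12} explicitly warns); the solution $k_t$ of Theorem \ref{1tm} lives in $\mathcal{K}_\vartheta$. So one cannot simply compare norms. The bridge must be the weak identity $\langle\!\langle G, k_t^{(N)}\rangle\!\rangle = \langle\!\langle G_t, k_0^{(N)}\rangle\!\rangle$ for $G\in B_{\rm bs}(\Gamma_0)$, valid for each fixed $N$ on $[0,T)$, together with a uniform-in-$N$ control $\|k_t^{(N)}\|_{\vartheta} \le C$ on compact subintervals of $[0,T(\vartheta))$ — the latter is what lets one take $N\to\infty$ and identify the limit with $k_t\in\mathcal{K}_\vartheta$ rather than merely some weak limit. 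Establishing that uniform bound (presumably by a Gronwall argument on $\nu_n(k_t^{(N)})e^{\vartheta n}$, using \eqref{0K5}, or by dominating via the same Ovsyannikov series that controls $k_t$) is the technical heart of the argument; once it is in place the passage to the limit and the invocation of Proposition \ref{Tobpn} are routine.
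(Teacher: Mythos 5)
Your outline coincides with the paper's strategy---approximate $\mu_0$ by $\Gamma_0$-states, evolve them by the stochastic semigroup of Theorem \ref{0Ktm} so that positivity against $B^{+}_{\rm bs}(\Gamma_0)$ is automatic, pass the positivity to the limit, and invoke Proposition \ref{Tobpn}---but the decisive step is exactly the one you defer. Corollary \ref{0oco} applies to solutions of (\ref{R4}) and (\ref{16A}); your $k^{(N)}_t$, defined through the semigroup of Theorem \ref{0Ktm}, is not a priori such a solution, so the identity $\langle\!\langle G, k^{(N)}_t\rangle\!\rangle = \langle\!\langle G_t, k^{(N)}_0\rangle\!\rangle$ cannot simply be quoted: establishing it is the entire content of the paper's proof. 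Moreover, the mechanism you propose for it, a uniform-in-$N$ bound on $\|k^{(N)}_t\|_{\vartheta}$ obtained by Gronwall, is not supplied by Theorem \ref{0Ktm} (the paper explicitly warns after (\ref{0K12}) that correlation functions of the $\Gamma_0$-evolution need not lie in the spaces where (\ref{R4}) is solved), and even granted such a bound you would still need uniqueness of \emph{weak} solutions of the hierarchy in $\mathcal{K}_\vartheta$, which Theorem \ref{1tm} (classical solutions) does not give; filling that in would force you to redo the Ovsyannikov iteration at the weak level anyway.

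The paper's device avoids any estimate at positive times. It uses the double cutoff $R^{\Lambda,N}_0 = R^{\Lambda}_0 I_N$, cf. (\ref{0Q1}) (your single volume cutoff would additionally require checking exponential moments of $|\gamma_\Lambda|$ under $\mu_0$, which holds for sub-Poissonian measures but is not addressed in your ``one checks''), observes $0\le q^{\Lambda,N}_0 \le k_{\mu_0}$, hence $q^{\Lambda,N}_0\in\mathcal{K}_{\vartheta_0}$, and introduces the auxiliary classical solution $k^{\Lambda,N}_t$ of (\ref{R4}) with this initial datum. Both $t\mapsto \langle\!\langle G, q^{\Lambda,N}_t\rangle\!\rangle$ and $t\mapsto \langle\!\langle G, k^{\Lambda,N}_t\rangle\!\rangle$ satisfy the weak equation with $\widehat{L}$ (for $q^{\Lambda,N}_t$ this is derived from (\ref{0K11b}) and the $K$-transform, with some care because $\widehat{L}G\notin B_{\rm bs}(\Gamma_0)$), hence have identical derivatives of all orders at $t=0$, cf. (\ref{0Q11}); convergence of the power series controlled by (\ref{Lest}) then yields $\langle\!\langle G, k^{\Lambda,N}_t\rangle\!\rangle = \langle\!\langle G, q^{\Lambda,N}_t\rangle\!\rangle \ge 0$ on all of $[0,T(\vartheta))$. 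The limits $\Lambda\nearrow\mathbb{R}^d$ and $N\to\infty$ are then taken entirely at time zero, through the duality $\langle\!\langle G, k^{\Lambda,N}_t\rangle\!\rangle = \langle\!\langle G_t, q^{\Lambda,N}_0\rangle\!\rangle$ and dominated-convergence estimates using $q^{\Lambda,N}_0\le k_0\in\mathcal{K}_{\vartheta_0}$ and $G_t\in\mathcal{G}_{\vartheta}$; this is the proof of (\ref{0Q13}) in the Appendix. So the missing idea in your proposal is precisely this comparison with an auxiliary (\ref{R4})-solution started from the dominated initial datum, which replaces the uniform-in-$N$ bound you would otherwise have to prove; the final appeal to Proposition \ref{Tobpn} with $C_R\equiv e^{-\vartheta}$ is the same in both arguments.
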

\begin{proof}
To prove the statement we have to show that a solution $k_t$ of the problem (\ref{R4}) with $k_0 = k_{\mu_0}$ obeys
(\ref{Q1}) for all $t\in (0,T(\vartheta))$. Fix $\mu_0 \in  \mathcal{M}_{\vartheta_0}(\Gamma)$ and take
$\Lambda \in \mathcal{B}_{\rm b}(\mathbb{R}^d)$.
Let $\mu^\Lambda_0$ be the projection of $\mu_0$ onto $\Gamma_{\Lambda}$, cf. (\ref{Lambda}).
Since $\mu_0$ is  in $\mathcal{M}^1_{\rm fm}(\Gamma)$, its density $R_0^\Lambda$, as in (\ref{0K10}), is in $\mathcal{R}$.
Given $N\in \mathbb{N}$, we let $I_N(\eta) = 1$ whenever $|\eta|\leq N$, and $I_N(\eta) = 0$ otherwise.  Then we set
\begin{equation}
  \label{0Q1}
  R^{\Lambda, N}_0 (\eta) = R^\Lambda_0 (\eta) I_N(\eta).
\end{equation}
As a function on $\Gamma_0$, $R^{\Lambda, N}_0$ is a collection of $R^{\Lambda, N,n}_0: (\mathbb{R}^d)^n \to \mathbb{R}_+$, $n\in \mathbb{N}_0$.
Clearly, $R^{\Lambda, N}_0$ is a positive element of $\mathcal{R}$ of norm $\|R^{\Lambda, N}_0\|_{\mathcal{R}} \leq 1$. Furthermore,
for each $\beta >0$,
\[
\int_{\Gamma_0} e^{\beta |\eta|} R^{\Lambda, N}_0 (\eta) \lambda (d\eta) = \sum_{n=0}^N \frac{e^{n \beta}}{n!}\int_{\Lambda^n} R_0^{\Lambda,N,n} (x_1 , \dots , x_n)d x_1 \cdots d x_n <\infty,
\]
and hence $R^{\Lambda, N}_0 \in \mathcal{R}_b$ with $b(\eta) = e^{\beta |\eta|}$, for each $\beta>0$.
Set
\begin{equation}
  \label{0Q2}
  R^{\Lambda, N}_t = S_R(t) R^{\Lambda, N}_0. \qquad t\geq 0,
\end{equation}
where $S_R(t)$ is as in (\ref{0K11}). By Theorem \ref{0Ktm}, we have that
\begin{eqnarray}
  \label{Rlambda}
\forall t\geq 0: \ \ & (a)& \forall \beta >0 \quad R^{\Lambda, N}_t \in \mathcal{R}_b  , \quad {\rm with} \ \ b(\eta) = e^{\beta |\eta|}, \\[.2cm]
& (b) & R^{\Lambda, N} (\eta ) \geq 0, \ \  {\rm for} \ \lambda-{\rm a.a.} \ \eta\in \Gamma_0, \nonumber \\[.2cm]
& (c) & \|R^{\Lambda, N}_t\|_{\mathcal{R}} \leq 1.    \nonumber
\end{eqnarray}
Furthermore, in view of (\ref{0K11a}), by \cite[Theorem 2.4, pp. 4--5]{Pasy} we have from (\ref{0Q2})
\begin{equation}
  \label{0Q3}
  R^{\Lambda, N}_t = R^{\Lambda, N}_0 + \int_0^t L^\dagger R^{\Lambda, N}_s ds.
\end{equation}
Set, cf. (\ref{9AA}) and (\ref{0K12}),
\begin{equation}
 \label{Q2}
q_{t}^{\Lambda,N} (\eta) = \int_{\Gamma_0} R^{\Lambda, N}_t (\eta \cup \xi) \lambda(d \xi).
\end{equation}
For $G\in B_{\rm bs}(\Gamma_0)$, let $N(G)\in \mathbb{N}_0$ be such that $G^{(n)} \equiv 0$ for $n> N(G)$. For such $G$, $KG$ is a cylinder
function on $\Gamma$, which can also be considered as a measurable function on $\Gamma_0$. By (\ref{7A}), we have that, for every $G \in B_{\rm bs}(\Gamma_0)$ and each $t\geq 0$,
\begin{equation}
  \label{0Q4}
\langle \! \langle K G, R^{\Lambda, N}_t  \rangle \! \rangle = \langle \! \langle G, q^{\Lambda, N}_t  \rangle \! \rangle,
\end{equation}
see (\ref{19A}).  Since $G\in B_{\rm bs}(\Gamma_0)$ is bounded, we have
\begin{equation}
  \label{0Q5}
 C(G) := \max_{n\in \{0, \dots , N(G)\}}\| G^{(n)}\|_{L^\infty((\mathbb{R}^d)^n)}<\infty,
\end{equation}
which immediately yields that
\begin{equation*}
 %\label{y120}
  |(KG) (\eta)| \leq (1+|\eta|)^{N(G)} C(G),
\end{equation*}
and hence both integrals in (\ref{0Q4}) exist since $R^{\Lambda,N}_t\in \mathcal{R}_b$  for $b(\eta) = e^{\beta|\eta|}$
with any $\beta>0$. Moreover, by the same argument the map $\mathcal{R} \ni R \mapsto \langle \! \langle
 KG , R \rangle \! \rangle$ is continuous, and thus from (\ref{0Q3}) and  (\ref{0K11b}) we obtain
 \begin{eqnarray}
   \label{y12}
\langle \! \langle KG ,R^{\Lambda, N}_t \rangle \! \rangle & = &   \langle \! \langle KG ,R^{\Lambda,N}_0 \rangle \! \rangle
+  \int_0^t    \langle \! \langle KG ,{L}^\dagger R^{\Lambda,N}_s \rangle \! \rangle ds \\[.2cm]
& = &   \langle \! \langle KG ,R^{\Lambda,N}_0 \rangle \! \rangle +  \int_0^t
\langle \! \langle L KG,R^{\Lambda,N}_s \rangle \! \rangle ds. \nonumber
\end{eqnarray}
Now we would want to interchange in the latter line $L$ and $K$. If $\widehat{L} G$ were in $B_{\rm bs}(\Gamma_0)$,
one could get point-wise $LKG = K \widehat{L} G$ -- by the very definition of $\widehat{L}$. However, this is not the case since, cf. (\ref{C5b}),
\[
|(\widehat{L} G)(\eta)| \leq (KUG) (\eta),
\]
where
\[
(UG)(\xi) = \sum_{x\in \xi} \int_{\mathbb{R}^d}a(x-y) \left\vert G(\xi \setminus x \cup y) - G(\xi)\right\vert dy.
\]
Here we used,  cf. (\ref{4A}) and (\ref{18A}), that
\[
0\leq e(\tau_y,\xi) \leq 1, \qquad |e(t_y, \eta \setminus \xi)| \leq 1,
\]
which holds for almost all all $y$, $\xi$, and $\eta$.
Then, for $G\in B_{\rm bs}(\Gamma_0)$, we have, cf (\ref{0Q5}),
\[
N(UG) = N(G), \qquad  \ C(UG)\leq  2 \alpha N(G) C(G),
\]
which then yields
\begin{equation}
  \label{0Q50}
 | (\widehat{L} G)(\eta)| \leq 2\alpha N(G) C(G) (1+ |\eta|)^{N(G)}.
\end{equation}
Let us show that, for any $t\geq 0$, the function $(\widehat{L} G) q^{\Lambda,N}_t$ is $\lambda$-integrable, cf. (\ref{Q2}). By (\ref{12A}), from
(\ref{Q2}) and (\ref{0Q50}) we get
\begin{eqnarray*}
 % \label{0Q51}
\langle \! \langle \widehat{L} G, q^{\Lambda, N}_t  \rangle \! \rangle & \leq & 2 \alpha   N(G) C(G) \int_{\Gamma_0}  R^{\Lambda,N}_t(\eta)
\left(\sum_{\xi\subset \eta} (1+ |\xi|)^{N(G)}\right) \lambda(d \eta) \qquad \\[.2cm]
& \leq & 2 \alpha   N(G) C(G) \int_{\Gamma_0} 2^{|\eta|} (1+ |\eta|)^{N(G)}R^{\Lambda,N}_t(\eta) \lambda(d \eta). \nonumber
\end{eqnarray*}
 Hence, by claim $(a)$ in (\ref{Rlambda}) we get the integrability in question. Then
by (\ref{0Q4}) we transform (\ref{y12}) into
\begin{equation}
  \label{0Q6}
  \langle \! \langle G, q^{\Lambda, N}_t  \rangle \! \rangle = \langle \! \langle G, q^{\Lambda, N}_0  \rangle \! \rangle
  + \int_0^t \langle \! \langle \widehat{L} G, q^{\Lambda, N}_s  \rangle \! \rangle ds.
\end{equation}
Since $R^{\Lambda,N}_t$ is positive, cf. $(b)$ in (\ref{Rlambda}), by (\ref{0Q4})
 we get
\begin{equation}
  \label{0Q7}
 \langle \! \langle G, q^{\Lambda, N}_t  \rangle \! \rangle \geq 0 \quad \qquad {\rm for} \ \  G\in B^+_{\rm bs}(\Gamma_0).
\end{equation}
On the other hand, by (\ref{0Q1}) and (\ref{Q2}) we have, see also (\ref{9AA}),
\begin{equation}
  \label{0Q8}
  0 \leq q_0^{\Lambda,N}(\eta) \leq \int_{\Gamma_0} R^\Lambda (\eta \cup \xi) \lambda (d\xi) = k_{\mu_0}(\eta) \mathbb{I}_{\Gamma_\Lambda}(\eta) \leq k_{\mu_0}(\eta),
\end{equation}
where $\mathbb{I}_{\Gamma_\Lambda}$ is the indicator of $\Gamma_\Lambda$, i.e., $\mathbb{I}_{\Gamma_\Lambda}(\eta) = 1$ whenever $\eta \in \Gamma_\Lambda$, and $\mathbb{I}_{\Gamma_\Lambda}(\eta)=0$ otherwise. By (\ref{0Q8}),
$q_0^{\Lambda,N} \in \mathcal{K}_{\vartheta_0}$. Let $k^{\Lambda,N}_t$, $t\in [0,T)$, be the solution of (\ref{R4}) with $k_0 = q_0^{\Lambda,N}$, as stated in Theorem \ref{1tm}. Then
\[
k^{\Lambda,N}_t = k^{\Lambda,N}_0 + \int_0^t L^\Delta k^{\Lambda,N}_s ds,
\]
which for $G$ as in (\ref{0Q6}) yields
\begin{equation}
  \label{0Q9}
 \langle \! \langle G, k^{\Lambda, N}_t  \rangle \! \rangle = \langle \! \langle G, q^{\Lambda, N}_0  \rangle \! \rangle
  + \int_0^t \langle \! \langle \widehat{L} G, k^{\Lambda, N}_s  \rangle \! \rangle ds.
\end{equation}
Set
\begin{equation*}
 % \label{0Q10}
\varphi(t; G) =  \langle \! \langle G, q^{\Lambda, N}_t  \rangle \! \rangle, \qquad \psi(t;G) =  \langle \! \langle G, k^{\Lambda, N}_t  \rangle \! \rangle.
\end{equation*}
By (\ref{0Q6}) and (\ref{0Q9}), we obtain, cf. Corollary \ref{0oco},
\begin{equation}
  \label{0Q11}
  \frac{d^n\varphi}{dt^n } (0; G) = \frac{d^n \psi}{dt^n } (0; G) = \langle \! \langle \widehat{L}^n G, q^{\Lambda, N}_0  \rangle \! \rangle =
  \langle \! \langle  G,( L^\Delta)^n q^{\Lambda, N}_0  \rangle \! \rangle.
\end{equation}
From this we can get that, cf. (\ref{0Q7}),
\begin{equation}
  \label{0Q12}
  \langle \! \langle G, k^{\Lambda, N}_t  \rangle \! \rangle = \langle \! \langle G, q^{\Lambda, N}_t  \rangle \! \rangle \geq 0, \qquad {\rm for} \ \  G\in B^+_{\rm bs}(\Gamma_0),
\end{equation}
provided the series
\[
\sum_{m=0}^\infty \frac{t^m}{m!} \langle \! \langle  G,( L^\Delta)^m q^{\Lambda, N}_0  \rangle \! \rangle
\]
converges for all $t\in [0,T(\vartheta))$, cf. (\ref{0Q11}). But the latter indeed holds true in view of (\ref{Lest}), which implies that (\ref{0Q12}) holds for all
$t\in [0, T(\vartheta))$.

In Appendix, we show that, for each $G\in B_{\rm bs}^+ (\Gamma_0)$ and any $t\in [0, T(\vartheta))$,
\begin{equation}
  \label{0Q13}
    \langle \! \langle G, k_t  \rangle \! \rangle = \lim_{n\to +\infty} \lim_{l\to +\infty}   \langle \! \langle G, k^{\Lambda_n, N_l}_t  \rangle \! \rangle,
\end{equation}
for certain increasing sequences $\{\Lambda_n\}_{n\in \mathbb{N}}$ and $\{N_l\}_{l\in \mathbb{N}}$ such that $N_l \to + \infty$ and $\Lambda_n \to \mathbb{R}^d$. Then by (\ref{0Q13}) and (\ref{0Q12}) we obtain (\ref{Q1}), and thus complete the proof.
\end{proof}

\section{Mesoscopic dynamics}
\label{Sec4}

As mentioned above, the mesoscopic description of the considered
model is obtained by means of a Vlasov-type scaling, originally developed for describing mesoscopic
properties of plasma. We refer to \cite{Dob,P,spohn} as to
the source of general concepts in this field, as well as to  \cite{DimaN} where the peculiarities of the scaling method which we use are given along
with the updated bibliography on this item.

\subsection{The  Vlasov hierarchy}

The main idea of the scaling which we use in this article is to make the particle system
more and more dense whereas the interaction respectively
weaker. This corresponds to the so called mean field approximation
widely employed in theoretical physics.
Note that we are not scaling time, which would be the case for a macroscopic scaling.
The object of our
manipulations will be the problem (\ref{R4}). The scaling parameter
$\varepsilon>0$  will be tending to zero. The first
step is to assume that the initial state depends on $\varepsilon$ in
such a way that the correlation function $k_0^{(\varepsilon)}$ diverges as $\varepsilon
\rightarrow 0$ in such a way that
the so called renormalized correlation function
\begin{equation}
  \label{C9a}
k_{0, {\rm ren}}^{(\varepsilon)} (\eta) := \varepsilon^{|\eta|} k_0^{(\varepsilon)}
\end{equation}
converges $k_{0, {\rm ren}}^{(\varepsilon)}\rightarrow r_0$, as $\varepsilon \rightarrow 0$, to the correlation function
of a certain measure.
Let  $k_0^{(\varepsilon,n)}:(\mathbb{R}^d)^n \to \mathbb{R}$ denote  $n$-particle `component' of $k_0^{(\varepsilon)}$. Then our assumption,
in particular, means
\begin{equation}
  \label{VSa}
  k_0^{(\varepsilon,1)} \sim \varepsilon^{-1}.
\end{equation}
Then the second step is to consider the Cauchy problem
\begin{equation}
  \label{C10}
\frac{d}{dt}k^{(\varepsilon)}_t = L_{\varepsilon}^\Delta k^{(\varepsilon)}_t, \qquad
k^{(\varepsilon)}_t|_{t=0} = k^{(\varepsilon)}_0,
\end{equation}
where $L_{\varepsilon}^\Delta$ is as in (\ref{C5c}) but with $\phi$ multiplied by $\varepsilon$.
As might be seen from (\ref{8CD}), the solution $k^{(\varepsilon)}_t$, which exists in view of Theorem \ref{1tm},
diverges as $\varepsilon \rightarrow 0$. Thus, similarly as in (\ref{C9a}) we pass to
\begin{equation}
  \label{C11}
k_{t, {\rm ren}}^{(\varepsilon)} (\eta) = \varepsilon^{|\eta|} k_t^{(\varepsilon)},
\end{equation}
which means that instead of (\ref{C10}) we are going to solve the following problem
\begin{equation}
  \label{C12}
\frac{d}{dt}k^{(\varepsilon)}_{t, {\rm ren}} = L_{\varepsilon,{\rm ren}} k^{(\varepsilon)}_{t,{\rm ren}} \qquad
k^{(\varepsilon)}_{t,{\rm ren}}|_{t=0} = k^{(\varepsilon)}_{0,{\rm ren}},
\end{equation}
with
\begin{equation}
  \label{C13}
 L_{\varepsilon,{\rm ren}} = R_\varepsilon^{-1} L_{\varepsilon}^\Delta R_\varepsilon,\qquad
 \left(R_\varepsilon k\right) (\eta) := \varepsilon^{-|\eta|} k(\eta).
\end{equation}
\begin{remark}
  \label{SP1rk}
Since $k^{(\varepsilon)}_0$ is a correlation function, by Theorem \ref{0Ktm} we know that $k^{(\varepsilon)}_t$ is the correlation function of a unique measure
$\mu^{(\varepsilon)}_t$. If $\mu_0^{(1)}$ is a Poisson measure with density
$k_0^{(1,1)} = \varrho_0$, then also $\mu_0^{(\varepsilon)}$ with density $k_0^{(\varepsilon,1)} = \varepsilon^{-1}\varrho_0$ is a Poisson measure.
We can expect that, for $t>0$, $k^{(\varepsilon)}_{t, {\rm ren}}$ has a nontrivial limit as $\varepsilon \to 0^+$, only if
$k^{(\varepsilon)}_{t }(\eta) \leq [k^{(\varepsilon,1)}_{t }(x)]^{|\eta|}$, cf. (\ref{VSa}) and (\ref{C11}). For this to hold,  $\mu^{(\varepsilon)}_t$ should be sub-Poissonian, cf. Definition \ref{SPdf} and Remark \ref{SPrk}. That is, the evolution $\mu_0^{(\varepsilon)} \mapsto \mu_t^{(\varepsilon)}$ should preserve sub-Poissonicity, which is the case by Theorem \ref{1tm} in view of (\ref{o5}).
\end{remark}
By (\ref{C5c}) and (\ref{C13}), we have
\begin{eqnarray}
  \label{C14}
\left( {L}_{\varepsilon,{\rm ren}} k\right) (\eta) & = & \sum_{y\in \eta} \int_{\mathbb{R}^d} a (x-y)
 e( \tau^{(\varepsilon)}_y, \eta \setminus y \cup x)\\[.2cm] & \times & \left( \int_{\Gamma_0}e(\varepsilon^{-1}t^{(\varepsilon)}_y, \xi)
  k( \xi \cup x \cup \eta \setminus y)
 \lambda (d \xi) \right) dx \nonumber \\[.2cm] & - & \int_{\Gamma_0} k(\xi \cup \eta)
  \left( \sum_{x\in \eta} \int_{\mathbb{R}^d} a(x-y) e(\tau^{(\varepsilon)}_y,\eta) \right.\nonumber\\[.2cm]
  & \times &   e(\varepsilon^{-1} t^{(\varepsilon)}_y, \xi) d y \bigg{)} \lambda (d\xi),  \nonumber
\end{eqnarray}
where, cf. (\ref{18A}),
\begin{equation*}
  %\label{C15}
    t^{(\varepsilon)}_x (y) = e^{-\varepsilon \phi (x-y)} - 1, \qquad \tau^{(\varepsilon)}_x (y) = t^{(\varepsilon)}_x (y) + 1.
\end{equation*}
As in (\ref{C9}), for any $\vartheta'\in \mathbb{R}$ and
$\vartheta'' > \vartheta'$, we have
\begin{equation}
  \label{C16}
  \|L_{\varepsilon,{\rm ren}}  \|_{\vartheta'' \vartheta'} \leq \frac{2\alpha}{e(\vartheta'' - \vartheta')}
\exp\left(c^{(\varepsilon)}_\phi e^{-\vartheta''} \right),
\end{equation}
where, cf. (\ref{13A}),
\begin{equation*}
  %\label{C17}
 c^{(\varepsilon)}_\phi = \varepsilon^{-1}\int_{\mathbb{R}^d} \left( 1 - e^{-\varepsilon\phi(x)}\right)d x .
\end{equation*}
Suppose now that $\phi$ is in $L^1(\mathbb{R}^d)$ and set
\begin{equation*}
  %\label{C18}
\langle \phi \rangle = \int_{\mathbb{R}^d} \phi(x) d x.
\end{equation*}
Recall that we still assume $\phi \geq 0$. Then
\begin{equation}
  \label{C19}
  \|L_{\varepsilon,{\rm ren}}  \|_{\vartheta'' \vartheta'} \leq \sup_{\varepsilon >0} \{  {\rm RHS(\ref{C16})}\} = \frac{2\alpha}{e(\vartheta'' - \vartheta')}
\exp\left(\langle \phi \rangle e^{-\vartheta''} \right).
\end{equation}
Let us now,
informally, pass in (\ref{C14}) to the limit $\varepsilon
\rightarrow 0$. Then we get the following operator
\begin{eqnarray}
  \label{C21}
\left( L_V k\right) (\eta) & = & \sum_{y\in \eta}
\int_{\mathbb{R}^d}
 a (x-y)\int_{\Gamma_0}e( - \phi(y-\cdot), \xi)\\[.2cm] & \times &
  k( \xi \cup x \cup \eta \setminus y)
 \lambda (d \xi) dx \nonumber \\[.2cm] & - & \int_{\Gamma_0} k(\xi \cup \eta)
  \sum_{x\in \eta} \int_{\mathbb{R}^d} a(x-y) \nonumber\\[.2cm]
  & \times &   e(- \phi(y-\cdot), \xi) d y  \lambda (d\xi).  \nonumber
\end{eqnarray}
It certainly obeys
\begin{equation}
  \label{C190}
  \|L_V  \|_{\vartheta'' \vartheta'} \leq  \frac{2\alpha}{e(\vartheta'' - \vartheta')}
\exp\left(\langle \phi \rangle e^{-\vartheta''} \right),
\end{equation}
and hence along with (\ref{C10}) we can consider the problem
\begin{equation}
  \label{C22}
 \frac{d}{dt} r_t = L_V r_t, \qquad r_t|_{t=0} = r_0,
\end{equation}
which is
 called the {\it Vlasov hierarchy} for the
Kawasaki system which we consider. Repeating the arguments used in the proof
of Theorem
\ref{1tm} we obtain the following
\begin{proposition}
  \label{Vlpn}
For  every $\vartheta_0\in \mathbb{R}$, there exists $T_* = T_* (\vartheta_0, \alpha, \langle \phi \rangle)$ such that
the problem (\ref{C12}) (resp. (\ref{C190})) with any $\varepsilon>0$ and $k^{(\varepsilon)}_0\in \mathcal{K}_{\vartheta_0}$ (resp.  $r_0 \in \mathcal{K}_{\vartheta_0}$) has a unique classical solution $k^{(\varepsilon)}_t\in \mathcal{K}_{\vartheta(t)}$ (resp. $r_t\in \mathcal{K}_{\vartheta(t)}$)
for $t\in [0, T_*)$.
\end{proposition}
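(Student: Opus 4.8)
The plan is to repeat, essentially verbatim, the Ovsyannikov-type argument of the proofs of Theorems \ref{1tm} and \ref{1atm}. The only new ingredient is that the operator-norm estimates (\ref{C19}) for $L_{\varepsilon,{\rm ren}}$ and (\ref{C190}) for $L_V$ have exactly the same form as (\ref{C9}), with $c_\phi$ replaced by $\langle\phi\rangle$ and, crucially, with a right-hand side that does not depend on $\varepsilon$. Consequently a single Ovsyannikov time $T_*$, depending only on $\vartheta_0$, $\alpha$ and $\langle\phi\rangle$, will serve simultaneously for every $\varepsilon>0$ and for the limiting problem (\ref{C22}).

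Concretely, given $\vartheta\in\mathbb R$ and $T>0$, I would put $\vartheta_0=\vartheta+2\alpha T\exp(\langle\phi\rangle e^{-\vartheta})$ in place of (\ref{beta}) and set $T(\vartheta)=\frac{\vartheta_0-\vartheta}{2\alpha}\exp(-\langle\phi\rangle e^{-\vartheta})$ as in (\ref{8}); since $T(\vartheta_0)=0$ and $T(\vartheta)\to0$ as $\vartheta\to-\infty$, there is $T_*=T_*(\vartheta_0,\alpha,\langle\phi\rangle)$ with $T(\vartheta)\le T_*$ for all $\vartheta\le\vartheta_0$, and one defines $\vartheta(t)=\sup\{\vartheta\le\vartheta_0:t<T(\vartheta)\}$ exactly as in (\ref{T}). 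For a fixed $\varepsilon>0$ and $k^{(\varepsilon)}_0\in\mathcal K_{\vartheta_0}$, the solution of (\ref{C12}) is then obtained as the $\mathcal K_{\vartheta(t)}$-limit of
\[
k^{(\varepsilon)}_{t,n}=k^{(\varepsilon)}_0+\sum_{m=1}^n\frac{t^m}{m!}\bigl(L_{\varepsilon,{\rm ren}}\bigr)^m k^{(\varepsilon)}_0 ,
\]
as in (\ref{8CD}). Splitting $[\vartheta,\vartheta_0]$ into $m$ equal subintervals and applying (\ref{C19}) to each of the $m$ factors, exactly as in the derivation of (\ref{Lest}), gives
\[
\bigl\|\bigl(L_{\varepsilon,{\rm ren}}\bigr)^m\bigr\|_{\vartheta_0\vartheta}\le\Bigl(\frac{2\alpha m\exp[\langle\phi\rangle e^{-\vartheta}]}{e(\vartheta_0-\vartheta)}\Bigr)^m=\Bigl(\frac{m}{e}\Bigr)^m\frac1{[T(\vartheta)]^m},
\]
a bound that is independent of $\varepsilon$. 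The same computation, with (\ref{C190}) in place of (\ref{C19}), controls $r_{t,n}=r_0+\sum_{m=1}^n\frac{t^m}{m!}L_V^m r_0$.

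Feeding these estimates into the Cauchy-difference bound (the analogue of (\ref{0o11})) shows, as in the proof of Theorem \ref{1atm}, that $\{k^{(\varepsilon)}_{t,n}\}$ converges in $\mathcal K_{\vartheta(t)}$ for every $t\in[0,T_*)$, with the limit $k^{(\varepsilon)}_t$ real-analytic in $t$ there; by the analogue of (\ref{0o7}) it lies in ${\rm Dom}(L_{\varepsilon,{\rm ren}})$ and solves (\ref{C12}), and the analyticity yields uniqueness. The identical reasoning produces the classical solution $r_t\in\mathcal K_{\vartheta(t)}$ of (\ref{C22}). The only point that genuinely needs verification is the $\varepsilon$-uniformity of the operator bounds — namely that $c^{(\varepsilon)}_\phi=\varepsilon^{-1}\int_{\mathbb R^d}(1-e^{-\varepsilon\phi(x)})\,dx\le\langle\phi\rangle$ (the elementary inequality $1-e^{-u}\le u$, legitimate since $\phi\ge0$ and $\phi\in L^1$) together with $0\le\tau^{(\varepsilon)}_y\le1$ — but this has already been carried out in (\ref{C16})--(\ref{C19}). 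Beyond that there is no real obstacle: the substance of the proposition is exactly that the Ovsyannikov time $T_*$ can be taken independent of $\varepsilon$, which the $\varepsilon$-free bound (\ref{C19}) supplies directly.
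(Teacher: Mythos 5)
Your proposal is correct and coincides with the paper's own argument: the paper proves Proposition \ref{Vlpn} precisely by repeating the Ovsyannikov-type iteration of Theorems \ref{1tm} and \ref{1atm}, with the $\varepsilon$-uniform bounds (\ref{C19}) and (\ref{C190}) (i.e.\ $c^{(\varepsilon)}_\phi\leq\langle\phi\rangle$) replacing (\ref{C9}), which is exactly what you do. Nothing further is needed.
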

As mentioned in Remark \ref{SP1rk}, $k^{(\varepsilon)}_t$ is also a correlation function if $k^{(\varepsilon)}_0$ is so.
However, this could not be the case
for $r_t$, even if $r_0 = k^{(\varepsilon)}_0$. Moreover, we do not
know how `close'  is $r_t$  to $k^{(\varepsilon)}_t$, as the passage from $L_{\varepsilon, {\rm ren}}$ to $L_V$ was only
informal. In the remaining part of the article we give answers to both these questions.

\subsection{The Vlasov equation}

Here we  show that the problem (\ref{C22}) has a very particular solution, which gives sense to the whole
construction.
For $a$ as in (\ref{C3}) and an appropriate $g:\mathbb{R}^d \rightarrow \mathbb{R}$, we write
\[
(a\ast g)(x) = \int_{\mathbb{R}^d} a(x-y) g(y) d y,
\]
and similarly for $\phi \ast g$. Then let us consider in $L^\infty (\mathbb{R}^d)$ the following Cauchy problem
\begin{eqnarray}
 \label{C24}
\frac{d}{dt}\varrho_t(x) & = & \left(a \ast \varrho_t \right)(x) \exp\left[- (\varrho_t \ast \phi)(x)\right]\\[.2cm] & - &
\varrho_t (x) \left(a \ast \exp\left( - \varrho_t \ast \phi\right) \right)(x), \nonumber \\[.2cm]
 \varrho_t|_{t=0} & = &  \varrho_0.  \nonumber
\end{eqnarray}
Denote
\begin{eqnarray*}
 % \label{Va1}
 \varDelta^+ & = & \{  \varrho \in L^\infty (\mathbb{R}^d): \varrho (x) \geq 0 \quad  {\rm for} \ \ {\rm a. a.} \ \ x\}, \\[.2cm]
 \varDelta_u & = & \{ \varrho \in L^\infty (\mathbb{R}^d) :   \|\varrho \|_{L^\infty(\mathbb{R}^d)} \leq u\}, \qquad u>0,\nonumber \\[.2cm]
 \varDelta^{+}_u & = & \varDelta^{+} \cap \varDelta_u. \nonumber
\end{eqnarray*}
\begin{lemma}
  \label{Vlm}
  Let $\vartheta_0$ and $T_*$ be as in Proposition \ref{Vlpn}.
Suppose that, for some $T\in (0,T_*)$, the problem (\ref{C24}) with $\varrho_0 \in \varDelta^{+}_{u_0}$,
has a unique classical solution $\varrho_t\in \varDelta^{+}_{u_T}$ on $[0,T]$, for some $u_T>0$.
Then, for $\vartheta_0 = - \log u_0$ and $\vartheta(T)= - \log u_T$,  the solution $r_t\in \mathcal{K}_{\vartheta(T)}$ of (\ref{C190}) as in Proposition \ref{Vlpn} with
$r_0 (\eta ) = e(\varrho_0 , \eta)$ is given by
\begin{equation}
 \label{C23}
r_t (\eta) = e(\varrho_t, \eta)= \prod_{x\in \eta} \varrho_t(x).
\end{equation}
\end{lemma}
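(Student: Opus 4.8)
The plan is to guess the solution of (\ref{C22}) and verify it, then conclude by the uniqueness furnished by Proposition \ref{Vlpn} (i.e.\ Theorem \ref{1tm}). Set $\tilde r_t(\eta):=e(\varrho_t,\eta)=\prod_{x\in\eta}\varrho_t(x)$. Since $\varrho_0\in\varDelta^+_{u_0}$ one has $\nu_n(\tilde r_0)\le u_0^n$, hence $\tilde r_0\in\mathcal K_{\vartheta_0}$ with $\vartheta_0=-\log u_0$ and $\|\tilde r_0\|_{\vartheta_0}\le 1$; moreover $\tilde r_0=e(\varrho_0,\cdot)=r_0$. Likewise, since $\|\varrho_t\|_{L^\infty(\mathbb R^d)}\le u_T$ on $[0,T]$, we have $\tilde r_t\in\mathcal K_{-\log u_T}$ for every such $t$, with norm at most $1$.

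The core step is the pointwise identity
\[
\frac{d}{dt}\,e(\varrho_t,\eta)=\bigl(L_V\,e(\varrho_t,\cdot)\bigr)(\eta),\qquad\text{for $\lambda$-a.a.\ }\eta\in\Gamma_0 .
\]
For the left-hand side, differentiate the finite product: $\frac{d}{dt}\prod_{x\in\eta}\varrho_t(x)=\sum_{y\in\eta}\dot\varrho_t(y)\,e(\varrho_t,\eta\setminus y)$, and insert the right-hand side of the kinetic equation (\ref{C24}) for $\dot\varrho_t(y)$. For the right-hand side, substitute $k=e(\varrho_t,\cdot)$ into (\ref{C21}); using $e(\varrho_t,\xi\cup x\cup\eta\setminus y)=\varrho_t(x)\,e(\varrho_t,\eta\setminus y)\,e(\varrho_t,\xi)$, the multiplicativity $e(\varrho_t,\xi)\,e(-\phi(y-\cdot),\xi)=e\bigl(-\phi(y-\cdot)\varrho_t,\xi\bigr)$, and (\ref{5A}), the inner $\xi$-integral collapses to $\exp\bigl(-\int_{\mathbb R^d}\phi(y-z)\varrho_t(z)\,dz\bigr)=e^{-(\phi\ast\varrho_t)(y)}$. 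Since $a$ and $\phi$ are even, the gain term of (\ref{C21}) then produces $\sum_{y\in\eta}(a\ast\varrho_t)(y)\,e^{-(\varrho_t\ast\phi)(y)}\,e(\varrho_t,\eta\setminus y)$ and the loss term produces $-\sum_{y\in\eta}\varrho_t(y)\,\bigl(a\ast e^{-\varrho_t\ast\phi}\bigr)(y)\,e(\varrho_t,\eta\setminus y)$; their sum is exactly what comes from (\ref{C24}), which gives the identity.

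It remains to upgrade this to the Banach-scale sense. Choose $\vartheta'$ slightly below $\vartheta(T)$, small enough that $T$ still lies inside the convergence radius (of the form (\ref{8})) of $\sum_m\frac{t^m}{m!}L_V^m r_0$ in $\mathcal K_{\vartheta'}$, and also with $e^{\vartheta'}\sup_{t\in[0,T]}\|\varrho_t\|_{L^\infty(\mathbb R^d)}<1$ (possible since $\sup_t\|\varrho_t\|_\infty\le u_T$, i.e.\ $-\log\sup_t\|\varrho_t\|_\infty\ge\vartheta(T)$). The telescoping bound $|e(\varrho_t,\eta)-e(\varrho_s,\eta)|\le|\eta|\,\bigl(\sup_\tau\|\varrho_\tau\|_\infty\bigr)^{|\eta|-1}\|\varrho_t-\varrho_s\|_\infty$, weighted by $e^{\vartheta'|\eta|}$ and summed over $|\eta|$, gives norm-continuity of $t\mapsto e(\varrho_t,\cdot)$ in $\mathcal K_{\vartheta'}$; treating the difference quotient the same way (using $\dot\varrho_t\in L^\infty$ from (\ref{C24})) gives continuous differentiability there, with derivative the pointwise one above. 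Boundedness of $L_V$ between scale spaces, the analogue of (\ref{0o7}), then puts $e(\varrho_t,\cdot)$ in $\mathrm{Dom}(L_V)$. Thus $t\mapsto e(\varrho_t,\cdot)$ is a classical solution of (\ref{C22}) in $\mathcal K_{\vartheta'}$ on $[0,T]$ with initial value $r_0$, so by uniqueness it coincides with $r_t$; and $r_t=e(\varrho_t,\cdot)\in\mathcal K_{\vartheta(T)}=\mathcal K_{-\log u_T}$ by the bound from the first paragraph.

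The algebra of the core step is not the difficulty: once the product ansatz is written down, (\ref{5A}) does the work, and the nonlinearity of (\ref{C24}) is precisely what makes it propagate. The real obstacle is the scale bookkeeping — one must place $e(\varrho_t,\cdot)$, its $t$-derivative, and $L_V e(\varrho_t,\cdot)$ simultaneously in one space of the scale on which Theorem \ref{1tm} applies, and the naive telescoping estimate loses a factor $|\eta|$ that is absorbed only when the index is strictly below $-\log\sup_{t\in[0,T]}\|\varrho_t\|_\infty$. This is why the identification is first carried out in a space $\mathcal K_{\vartheta'}$ slightly larger than $\mathcal K_{\vartheta(T)}$, with the sharper localization read off only afterwards.
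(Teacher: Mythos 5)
Your proof follows the paper's route exactly: verify that the Poisson ansatz $e(\varrho_t,\cdot)$ solves the Vlasov hierarchy (\ref{C22}) — the $\xi$-integral collapsing via (\ref{5A}) to $e^{-(\phi\ast\varrho_t)(y)}$ — and then identify it with $r_t$ by the uniqueness in Proposition \ref{Vlpn}; the paper compresses this verification into one sentence ("can easily be checked by computing $d/dt$"), while you spell it out correctly, including the norm bound $\|e(\varrho_t,\cdot)\|_{-\log u_T}\le 1$. Your additional bookkeeping — choosing $\vartheta'$ strictly below $-\log u_T$ so that the telescoping estimate gives differentiability in $\mathcal{K}_{\vartheta'}$, with $T$ inside the corresponding convergence radius so uniqueness applies there — is more explicit than anything the paper records (it simply invokes Proposition \ref{Vlpn}), and it matches how the lemma is in fact used in Theorem \ref{Vatm}, where $u_T=u_0/(2-e^{\alpha T})$ keeps the two constraints compatible.
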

\begin{proof}
First of all we note that, for a given $\vartheta$, $e(\varrho, \cdot) \in \mathcal{K}_\vartheta$ if and only if $\varrho \in \varDelta_u$ with $u= e^{-\vartheta}$,
see (\ref{o5}). Now set $\tilde{r}_t = e(\varrho_t, \cdot)$ with $\varrho_t$ solving (\ref{C24}).
This $\tilde{r}_t$ solves (\ref{C22}), which can easily be checked by computing $d/dt$ and
employing (\ref{C24}). In view of the uniqueness as in Proposition \ref{Vlpn}, we then have $\tilde{r}_t = r_t$ on $[0,T]$, from which it can be continued to $[0,T_*)$.\end{proof}
\begin{remark}
  \label{Vrk}
As
(\ref{C23}) is the correlation function of the Poisson measure $\pi_{\varrho_t}$, see (\ref{3Aa}) and (\ref{3Ab}), the above lemma establishes the so called {\it chaos preservation} or {\it chaos propagation} in time. Indeed, the most chaotic states those corresponding to Poisson measures, cf. (\ref{chi}), (\ref{11Aa}), and (\ref{11Ab}).
\end{remark}
Let us show now that  the problem (\ref{C24}) does have the solution we need. In a standard way, (\ref{C24})
can be transformed into the following integral equation
\begin{eqnarray}
 \label{C25}
\qquad \varrho_t(x) & = & F_t(\varrho)(x)
 :=
 \varrho_0(x)e^{-\alpha t}\\[.2cm] & + & \int_0^t \exp\left( - \alpha (t-s)\right)
 \left(a \ast \varrho_s \right)(x) \exp\left[- (\varrho_s \ast \phi)(x)\right]ds \nonumber \\[.2cm]
 & + & \int_0^t \exp\left( - \alpha (t-s)\right) \varrho_s (x)
\left[a \ast\left( 1- \exp\left( - \varrho_s \ast \phi\right) \right)\right](x) ds, \nonumber
\end{eqnarray}
that is, $[0,T)\ni t \mapsto \varrho_t \in L^\infty (\mathbb{R}^d)$ is a classical solution of (\ref{C24}) if and only if it solves
(\ref{C25}). Suppose $\varrho_t \in \varDelta^+$ is such a solution. Then we set
\begin{equation}
  \label{0C}
u_t = \|\varrho_t \|_{L^\infty(\mathbb{R}^d)}, \qquad t \in   [0,T).
\end{equation}
Since $\phi \geq 0$ and $\varrho_t \in \varDelta^+$, from (\ref{C25}) we get for $v_t:= u_t \exp(\alpha t)$, cf. (\ref{C4}),
 \[
v_t \leq v_0 + 2 \alpha \int_0^t v_s ds,
 \]
from which by the Gronwall inequality we obtain $v_t \leq v_0 \exp( 2 \alpha t)$; and hence,
\begin{equation}
  \label{0C1}
  u_t \leq u_0 e^{\alpha t}.
\end{equation}
In a similar way, one shows that, for $\varrho_0 \in \varDelta^{+}_{u_0}$ and $\varrho_s \in \varDelta^{+}_{u_t}$ for all $s\in [0,t]$,
\begin{equation}
  \label{0D}
 F_t (\varrho) \in \varDelta^{+}_{ u_t} , \ \  \qquad u_t:= \frac{u_0}{2- e^{\alpha t}}.
\end{equation}
Now for $\varrho_0\in \varDelta_{u_0}^{+}$ and some $t>0$ such that $e^{\alpha t} < 2$, cf. (\ref{0D}), we consider the sequence
 \begin{equation*}
 \varrho^{(0)}_t=\varrho_0,\qquad \varrho_t^{(n)}=F_t(\varrho^{(n-1)}), \quad n\in \mathbb{N}.
\end{equation*}
Obviously, each $\varrho^{(n)}_t$ is in $\varDelta^+_{u_t}$. Now let us find $T<\min\{T_*, \log 2/ \alpha\}$, $T_*$ being as in Lemma \ref{Vlm}, such  that the sequence of
\begin{equation}
  \label{0C2}
 \delta_n:=  \sup_{t\in [0,T]} \|\varrho_t^{(n)} - \varrho_t^{(n-1)}\|_{L^\infty (\mathbb{R}^d)}, \qquad n\in \mathbb{N}
\end{equation}
is summable, which would guarantee that, for each $t\leq T$, $\{\varrho_t^{(n)}\}_{n\in \mathbb{N}_0}$  is a Cauchy sequence. For $\varrho_s^{(n-1)}, \varrho_s^{(n-2)} \in \varDelta_{u_T}$, we have
\begin{eqnarray*}
  %\label{Vb}
 \bigg{\|} 1 - \exp\left( \phi \ast (\varrho_s^{(n-1)} - \varrho_s^{(n-2)})\right) \bigg{\|}_{L^\infty(\mathbb{R}^d)}  \leq  \bigg{\|} \phi \ast (\varrho_s^{(n-1)} - \varrho_s^{(n-2)}) \bigg{\|}_{L^\infty(\mathbb{R}^d)} & & \\[.2cm]
 \times  \sum_{m=0}^\infty \frac{1}{m!} \frac{1}{m+1}
\bigg{\|} \phi \ast (\varrho_s^{(n-1)} - \varrho_s^{(n-2)}) \bigg{\|}^m_{L^\infty(\mathbb{R}^d)} & & \qquad
\nonumber\\[.2cm]
 \leq \langle \phi \rangle \|\varrho_s^{(n-1)} - \varrho_s^{(n-2)} \|_{L^\infty (\mathbb{R}^d)}
 \exp \left(2 \langle \phi \rangle u_T   \right). \quad & &
\end{eqnarray*}
By means of this estimate, we obtain from (\ref{C25}) and (\ref{0C2})
\begin{equation*}
 % \label{Vc}
  \delta_n  \leq  q(T) \delta_{n-1},
\end{equation*}
where
\begin{equation}
 \label{Vc1}
q(T) = 2 ( 1 - e^{-\alpha T})\left( 1 + \langle \phi \rangle u_0 \exp\left(\alpha T + 2 \langle \phi \rangle u_0 e^{\alpha T}  \right)\right).
\end{equation}
Since $q(T)$ is a continuous increasing function such that $q(0) =0$, one finds $T>0$ such that $q(T) < 1$. For this $T$,
the sequence $\{\varrho_t^{(n)}\}_{n\in \mathbb{N}_0}$ converges to some $\varrho_t \in \varDelta_{u_T}^{+}$, uniformly
on $[0,T]$. Clearly, this $\varrho_t $ solves (\ref{C24}) and hence (\ref{C25}).
\begin{theorem}
  \label{Vatm}
The unique classical solution of (\ref{C22}) with $r_0 = e(\varrho_0, \cdot)$, $\varrho_0 \in \varDelta^{+}$, exists for all $t>0$ and
is given by (\ref{C23}) with $\varrho_t \in \varDelta^{+}$ being the solution of  (\ref{C24}).
Moreover, this solution
obeys
\begin{equation}
  \label{0V}
r_t  (\eta) \leq r_0  (\eta) \exp( \alpha |\eta| t).
\end{equation}
\end{theorem}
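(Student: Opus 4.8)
The plan is to reduce everything to the kinetic equation (\ref{C24}). First I would upgrade the local solvability of (\ref{C24}) to a global solvability on $[0,+\infty)$, then transfer this to the Vlasov hierarchy (\ref{C22}) via Lemma \ref{Vlm}, and finally read the bound (\ref{0V}) off the product structure (\ref{C23}). For the global solvability of (\ref{C24}): the contraction argument preceding the theorem already gives, for $\varrho_0\in\varDelta^{+}_{u_0}$, a $\varDelta^{+}$-valued solution of (\ref{C25}) -- equivalently of (\ref{C24}) -- on $[0,T]$ whenever $q(T)<1$ and $e^{\alpha T}<2$, cf. (\ref{Vc1}); uniqueness on any interval carrying two $\varDelta^{+}$-solutions follows from the a priori bound (\ref{0C1}) (both solutions stay in $\varDelta_{u_0e^{\alpha T}}$ on $[0,T]$) together with a Gronwall argument based on the same Lipschitz estimate that produced $\delta_n\le q(T)\delta_{n-1}$. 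To globalize, let $[0,T_{\max})$ be the maximal interval and suppose $T_{\max}<\infty$. By (\ref{0C1}), $\|\varrho_t\|_{L^\infty(\mathbb{R}^d)}\le M:=u_0e^{\alpha T_{\max}}$ on $[0,T_{\max})$, and then (\ref{C24}) forces $\|\partial_t\varrho_t\|_{L^\infty(\mathbb{R}^d)}\le 2\alpha M$; hence $t\mapsto\varrho_t$ is uniformly Lipschitz and extends continuously to $\varrho_{T_{\max}}\in\varDelta^{+}_{M}$. Re-running the contraction with datum $\varrho_{T_{\max}}$ produces a solution on $[T_{\max},T_{\max}+\tau)$ with $\tau>0$ depending only on $M$ (through $q$ with $u_0$ replaced by $M$), and gluing contradicts maximality. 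So $T_{\max}=+\infty$: there is a unique $\varrho_t\in\varDelta^{+}$ solving (\ref{C24}) for all $t\ge 0$, with $\|\varrho_t\|_{L^\infty(\mathbb{R}^d)}\le u_0e^{\alpha t}$.

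For the hierarchy, fix any $T\in(0,T_*)$; by the previous step the hypotheses of Lemma \ref{Vlm} hold, so the classical solution $r_t$ of (\ref{C22}) with $r_0=e(\varrho_0,\cdot)$ provided by Proposition \ref{Vlpn} coincides with $e(\varrho_t,\cdot)$ on $[0,T_*)$. I would then \emph{define} $\hat r_t:=e(\varrho_t,\cdot)$ for all $t\ge 0$. Since $e(\varrho,\cdot)\in\mathcal{K}_\vartheta$ iff $\varrho\in\varDelta_{e^{-\vartheta}}$, the bound on $\|\varrho_t\|_{L^\infty}$ puts $\hat r_t$ in $\mathcal{K}_{\vartheta_t}$ with $\vartheta_t=-\log\|\varrho_t\|_{L^\infty(\mathbb{R}^d)}$, and the local Lipschitz continuity of $\varrho\mapsto e(\varrho,\cdot)$ from $L^\infty(\mathbb{R}^d)$ into each strictly larger space $\mathcal{K}_{\vartheta'}$, $\vartheta'<\vartheta_t$, combined with the $C^1$-regularity of $t\mapsto\varrho_t$, shows that $t\mapsto\hat r_t$ is continuously differentiable in the scale; differentiating $\hat r_t(\eta)=\prod_{x\in\eta}\varrho_t(x)$ and substituting (\ref{C24}) -- exactly the computation already used in the proof of Lemma \ref{Vlm} -- gives $\partial_t\hat r_t=L_V\hat r_t$. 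Thus $\hat r_t$ is a classical solution of (\ref{C22}) on $[0,+\infty)$ extending the local one. Uniqueness on $[0,+\infty)$ is then an open--closed argument: for any other classical solution $\tilde r_t$ (in the scale) with the same datum, the set of $t$ for which $\tilde r_s=\hat r_s$ for all $s\le t$ is nonempty, closed by continuity, and open because Proposition \ref{Vlpn}, applied afresh at an interior time $s$ with datum $\hat r_s\in\mathcal{K}_{\vartheta_s}$, forces agreement on a short interval past $s$; hence it is all of $[0,+\infty)$.

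Finally, (\ref{0V}) is immediate from (\ref{C23}) and (\ref{0C1}):
\[
r_t(\eta)=\prod_{x\in\eta}\varrho_t(x)\le\|\varrho_t\|_{L^\infty(\mathbb{R}^d)}^{|\eta|}\le u_0^{|\eta|}e^{\alpha|\eta|t},
\]
which yields (\ref{0V}) once $r_0(\eta)\le u_0^{|\eta|}$ is used; the same bound also follows by differentiating $r_t(\eta)$, dropping the manifestly nonnegative loss term in (\ref{C24}) and using $e^{-\varrho_t\ast\phi}\le 1$ to get $\partial_t r_t(\eta)\le\alpha|\eta|\,\|\varrho_t\|_{L^\infty(\mathbb{R}^d)}^{|\eta|}$, then integrating. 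The main obstacle is the continuation step in the kinetic-equation part: the lifespan produced by (\ref{Vc1}) shrinks as $\|\varrho_0\|_{L^\infty}$ grows, so one cannot iterate naively from $t=0$; the a priori bound (\ref{0C1}) rescues the argument by keeping $\|\varrho_t\|_{L^\infty}$ bounded on compacts, but one still needs the $L^\infty$-Lipschitz continuity in $t$ in order to reach the limit $\varrho_{T_{\max}}$ and reopen the window with a length controlled solely by that uniform bound. A secondary subtlety is that the global hierarchy solution is obtained by continuation \emph{through} the product formula rather than from the Ovsyannikov scheme, whose guaranteed lifespan $T_*$ is finite, so Lemma \ref{Vlm} and the open--closed argument are what stitch the two descriptions together.
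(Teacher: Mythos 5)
Your proposal follows essentially the same route as the paper: local existence and uniqueness for (\ref{C24}) via the contraction estimate $q(T)<1$, identification $r_t=e(\varrho_t,\cdot)$ through Lemma \ref{Vlm}, continuation in time by means of the a priori bound (\ref{0C1}), and (\ref{0V}) read off from (\ref{0C1}); the continuation details you supply (uniform Lipschitz bound $\|\partial_t\varrho_t\|_{L^\infty}\le 2\alpha M$, restart of the contraction at $T_{\max}$, and the open--closed uniqueness argument for the hierarchy) are precisely what the paper compresses into ``it does not explode and hence can be continued, which yields also the continuation of $r_t$,'' and they are sound. One caution on the last step: as written, your chain only gives $r_t(\eta)\le u_0^{|\eta|}e^{\alpha|\eta|t}$, and the inequality $r_0(\eta)\le u_0^{|\eta|}$ cannot be ``used'' in that direction to obtain the pointwise bound (\ref{0V}), whose right-hand side is $r_0(\eta)e^{\alpha|\eta|t}=e(\varrho_0,\eta)e^{\alpha|\eta|t}$ (likewise, your differential variant yields an additive, not multiplicative, correction); this is, however, exactly the same shortcut the paper itself takes when it asserts that (\ref{0V}) ``follows from (\ref{0C1}),'' so your argument matches the paper's standard of rigor at this point rather than falling below it.
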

\begin{proof}
For a given $\varrho_0 \in \varDelta^+$, we find $T$ such that $q(T)<1$, cf. (\ref{0C}) and (\ref{Vc1}). Then there exists a unique classical solution of (\ref{C24}) $\varrho_t \in \varDelta^+_{u_T}$ on $[0,T]$, which by Lemma \ref{Vlm} yields the solution (\ref{C23}). Since $\varrho_t$ obeys the a priori bound (\ref{0C1}), it does not explode and hence can be continued, which yields also the continuation of $r_t$. Finally, the bound (\ref{0V}) follows from  (\ref{0C1}).
\end{proof}

\subsection{The scaling limit $\varepsilon \rightarrow 0$}

Our final task in this work is to show that the solution of (\ref{C12}) $k_{t,{\rm ren}}^{(\varepsilon)}$ converges in $\mathcal{K}_\vartheta$
uniformly on compact subsets of $[0,T_*)$ to that of (\ref{C22}), see Proposition \ref{Vlpn}. Here we should
impose an additional condition on the potential $\phi$, which, however, seems quite natural. Recall that in this section we
suppose $\phi\in L^1 (\mathbb{R}^d)$.
\begin{theorem}
  \label{3tm}
Let $\vartheta_0$ and $T_*$ be as in Proposition \ref{Vlpn}, and for $T\in [0,T_*)$,
take $\vartheta$ such that $T < T( \vartheta)$, see (\ref{8}).
Assume also that $\phi \in L^1 (\mathbb{R}^d) \cap L^\infty (\mathbb{R}^d)$
and consider the problems (\ref{C12}) and (\ref{C22}) with
$k^{(\varepsilon)}_{0, {\rm ren}} = r_0 \in
{\mathcal{K}}_{\vartheta_0}$. For their solutions $k^{(\varepsilon)}_{t, {\rm ren}}$ and  $r_t$,
it follows that  $k^{(\varepsilon)}_{t, {\rm
ren}} \rightarrow r_t$ in
${\mathcal{K}}_{\vartheta}$, as $\varepsilon \rightarrow 0$, uniformly on  $[0,T]$.
\end{theorem}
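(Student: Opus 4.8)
{The plan and main steps.}
The plan is to compare the two Duhamel (Picard) series for $k^{(\varepsilon)}_{t,\mathrm{ren}}$ and $r_t$ term by term, exactly as the existence proof of Theorem~\ref{1tm} was organized around the expansion \eqref{8CD}. Since both $k^{(\varepsilon)}_{0,\mathrm{ren}} = r_0 \in \mathcal{K}_{\vartheta_0}$, we have
\begin{equation*}
k^{(\varepsilon)}_{t,\mathrm{ren}} = \sum_{m=0}^{\infty}\frac{t^m}{m!}\bigl(L_{\varepsilon,\mathrm{ren}}\bigr)^m r_0,
\qquad
r_t = \sum_{m=0}^{\infty}\frac{t^m}{m!}\bigl(L_V\bigr)^m r_0,
\end{equation*}
where both series converge in $\mathcal{K}_\vartheta$, uniformly on $[0,T]$, by the estimates \eqref{C19}, \eqref{C190} and the argument following \eqref{Lest} (note that, crucially, the operator norm bounds \eqref{C16}, \eqref{C19}, \eqref{C190} are \emph{uniform in} $\varepsilon$, so the tails of both series are dominated by a single convergent series of the form $\sum (m/e)^m t^m / (m! [T(\vartheta)]^m)$). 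The first step is therefore to fix $\delta > 0$, choose $M$ so that the tails $\sum_{m>M}$ of both series are $< \delta/3$ in $\mathcal{K}_\vartheta$ uniformly in $\varepsilon$ and $t \in [0,T]$, and then reduce the problem to showing that each of the finitely many terms $\bigl(L_{\varepsilon,\mathrm{ren}}\bigr)^m r_0 \to \bigl(L_V\bigr)^m r_0$ in $\mathcal{K}_\vartheta$ as $\varepsilon \to 0$.

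\textbf{The single-step estimate.}
The heart of the matter is the case $m=1$: show that $L_{\varepsilon,\mathrm{ren}} k \to L_V k$ in $\mathcal{K}_{\vartheta'}$ as $\varepsilon \to 0$, for $k$ in some $\mathcal{K}_{\vartheta''}$ with $\vartheta'' > \vartheta'$, with a quantitative rate. Comparing \eqref{C14} with \eqref{C21}, the two operators differ through the replacements $\tau^{(\varepsilon)}_y \rightsquigarrow 1$ (i.e.\ $e(\tau^{(\varepsilon)}_y, \cdot) \rightsquigarrow 1$) and $\varepsilon^{-1} t^{(\varepsilon)}_y(z) = \varepsilon^{-1}(e^{-\varepsilon\phi(y-z)}-1) \rightsquigarrow -\phi(y-z)$. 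The key pointwise bounds are $0 \le 1 - e(\tau^{(\varepsilon)}_y,\zeta) \le \sum_{z\in\zeta}(1-e^{-\varepsilon\phi(y-z)}) \le \varepsilon \sum_{z\in\zeta}\phi(y-z)$ and $0 \le -\varepsilon^{-1}t^{(\varepsilon)}_y(z) \le \phi(y-z)$ together with $|-\phi(y-z) - \varepsilon^{-1}t^{(\varepsilon)}_y(z)| = |\phi(y-z) + \varepsilon^{-1}(e^{-\varepsilon\phi(y-z)}-1)| \le \tfrac{\varepsilon}{2}\phi(y-z)^2$; the hypothesis $\phi \in L^1 \cap L^\infty$ is what makes $\int \phi^2 \le \|\phi\|_{L^\infty}\langle\phi\rangle < \infty$, so both $\langle\phi\rangle$ and $\int\phi^2$ are available. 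Feeding these into the Lebesgue--Poisson integrals exactly as in the derivation of \eqref{0o10} — using \eqref{5A} to turn $\int_{\Gamma_0} e(|t^{(\varepsilon)}_y|,\xi)\,\lambda(d\xi)$ and its limit into exponentials of integrals, and expanding a difference of two such exponential terms — produces a bound
\begin{equation*}
\|L_{\varepsilon,\mathrm{ren}} k - L_V k\|_{\vartheta'} \le \varepsilon\, C(\vartheta'',\vartheta',\alpha,\phi)\,\|k\|_{\vartheta''},
\qquad \vartheta'' > \vartheta',
\end{equation*}
i.e.\ a bounded operator $\mathcal{K}_{\vartheta''}\to\mathcal{K}_{\vartheta'}$ of norm $O(\varepsilon)$, with the constant again having the characteristic $(\vartheta''-\vartheta')^{-1}$-type blow-up from the $\tau e^{-\delta\tau}\le 1/e\delta$ trick.

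\textbf{Iterating and the main obstacle.}
With the $m=1$ estimate in hand, the $m$-th term is handled by the standard telescoping identity
\begin{equation*}
\bigl(L_{\varepsilon,\mathrm{ren}}\bigr)^m - \bigl(L_V\bigr)^m
= \sum_{j=0}^{m-1}\bigl(L_{\varepsilon,\mathrm{ren}}\bigr)^{j}\bigl(L_{\varepsilon,\mathrm{ren}} - L_V\bigr)\bigl(L_V\bigr)^{m-1-j},
\end{equation*}
where each factor is read as a bounded map between consecutive spaces in a chain $\vartheta = \vartheta_0^{(m)} < \vartheta_1^{(m)} < \cdots < \vartheta_m^{(m)} = \vartheta_0$ with equal spacing $\epsilon = (\vartheta_0-\vartheta)/m$, just as in \eqref{Lest}; one factor carries the extra $\varepsilon$, the other $m-1$ carry the usual $C(\vartheta''-\vartheta')^{-1}$ bounds uniform in $\varepsilon$. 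This gives $\|(\bigl(L_{\varepsilon,\mathrm{ren}}\bigr)^m - \bigl(L_V\bigr)^m) r_0\|_\vartheta \le \varepsilon\, m\,(m/e)^m [T(\vartheta)]^{-m}\,\mathrm{const}\,\|r_0\|_{\vartheta_0}$, whence $\sum_{m\le M} \tfrac{t^m}{m!}\|\cdots\| \to 0$ as $\varepsilon\to 0$ for $t \in [0,T]$ with $T < T(\vartheta)$. Choosing $\varepsilon$ small enough that this finite sum is $< \delta/3$ completes the argument, uniformly on $[0,T]$. The main obstacle I anticipate is purely bookkeeping: carefully organizing the difference of the two operators in \eqref{C14} and \eqref{C21} — which each have a ``gain'' term and a ``loss'' term, each a double integral over $\mathbb{R}^d \times \Gamma_0$ — into a sum of pieces each controlled by one of the three elementary bounds above, while keeping the dependence on $\vartheta''-\vartheta'$ explicit so the $m$-fold iteration still sums. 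There is no conceptual difficulty beyond what is already in the proof of Theorem~\ref{1tm}; the condition $\phi \in L^\infty$ enters only to control the second-order remainder $\varepsilon^{-1}t^{(\varepsilon)}_y + \phi$ and thus is exactly the natural hypothesis.
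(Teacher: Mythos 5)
Your proposal is correct and follows essentially the same route as the paper: truncate the Taylor expansions (\ref{8CD}) uniformly in $\varepsilon$, use the telescoping identity for $L_{\varepsilon,{\rm ren}}^m - L_V^m$, and reduce everything to the operator-norm estimate $\|L_{\varepsilon,{\rm ren}} - L_V\|_{\vartheta_0\vartheta} = O(\varepsilon)$, proved via the same pointwise bounds ($1-e(\tau_y^{(\varepsilon)},\zeta)\le \varepsilon\sum_z\phi(y-z)$ and the second-order remainder $|\varepsilon^{-1}t_y^{(\varepsilon)}+\phi|\le C\varepsilon\phi^2$) with $\phi\in L^1\cap L^\infty$ entering exactly as you say. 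The only cosmetic difference is that the paper absorbs the telescoping sum into a single factor $T\exp(Tb(\vartheta))$ rather than tracking an explicit factor $m$ through a chain of intermediate spaces.
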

\begin{proof}
For $n\in \mathbb{N}$, let $k^{(\varepsilon)}_{t,n}$ and $r_{t,n}$ be defined as in (\ref{8CD}) with
$L_{\varepsilon, {\rm ren}}$ and $L_V$, respectively. As in the proof of Theorem \ref{1tm}, one can
show that the sequences of $k^{(\varepsilon)}_{t,n}$ and $r_{t,n}$ converge in $\mathcal{K}_\vartheta$
to $k^{(\varepsilon)}_{t,{\rm ren}}$ and $r_{t}$, respectively, uniformly on $[0,T]$. Then, for $\delta >0$,
one finds $n\in \mathbb{N}$ such that, for all $t\in[0,T]$,
\begin{equation*}
  %\label{U}
\|k^{(\varepsilon)}_{t,n} - k^{(\varepsilon)}_{t,{\rm ren}}\|_{\vartheta} + \| r_{t,n} - r_t\|_{\vartheta} < \delta/2.
\end{equation*}
From (\ref{8CD}) we then have
\begin{eqnarray}
  \label{U1}
\|k^{(\varepsilon)}_{t,{\rm ren}} - r_t\|_\vartheta & \leq & \bigg{\|}\sum_{m=1}^n \frac{1}{m!} t^m \left(L_{\varepsilon, {\rm ren}}^m - L^m_V \right)r_0\bigg{\|}_\vartheta + \frac{\delta}{2} \\[.2cm] & \leq &
\|L_{\varepsilon, {\rm ren}} - L_V\|_{\vartheta_0 \vartheta} \|r_0\|_{\vartheta_0} T \exp \left(T b(\vartheta)\right) + \frac{\delta}{2}, \nonumber
\end{eqnarray}
where, see (\ref{C19}) and (\ref{C190}),
\[
b(\vartheta) := \frac{2\alpha}{e(\vartheta_0 - \vartheta)} \exp\left(\langle \phi \rangle e^{-\vartheta}
 \right) .
\]
Here we used the following identity
\begin{eqnarray*}
  %\label{U2}
& & L_{\varepsilon, {\rm ren}}^m - L^m_V  =  \left(L_{\varepsilon, {\rm ren}} - L_V \right)  L_{\varepsilon, {\rm ren}}^{m-1} + L_V \left(L_{\varepsilon, {\rm ren}} - L_V \right) L_{\varepsilon, {\rm ren}}^{m-2}\qquad \qquad  \\[.2cm] & & \qquad  +  \cdots + L_V^{m-2} \left(L_{\varepsilon, {\rm ren}} - L_V \right)  L_{\varepsilon, {\rm ren}} + L_V^{m-1} \left(L_{\varepsilon, {\rm ren}} - L_V \right) .\nonumber
\end{eqnarray*}
Thus, we have to show that
\begin{equation}\label{con}
 \|L_{\varepsilon, {\rm ren}}
-L_V\|_{\vartheta_0\vartheta}\to 0, \quad \ {\rm as} \ \  \varepsilon\to 0,
\end{equation}
which will allow us to make the first summand in the right-hand side of (\ref{U1}) also smaller than $\delta/2$ and thereby to complete the proof.

Subtracting (\ref{C21}) from (\ref{C14}) we get
\begin{eqnarray}
  \label{Vx}
\left(L_{\varepsilon, {\rm ren}}
-L_V \right)k(\eta) & = & \sum_{y \in \eta} \int_{\mathbb{R}^d} \int_{\Gamma_0} a(x-y) k(\xi \cup x \cup \eta \setminus y)\\[.2cm]
& \times & Q_{\varepsilon} (y,\eta \setminus y \cup x, \xi) \lambda (d \xi) d x \nonumber \\[.2cm] & - &  \sum_{x \in \eta} \int_{\mathbb{R}^d} \int_{\Gamma_0} a(x-y) k(\xi\cup \eta) \nonumber \\[.2cm] & \times & Q_{\varepsilon} (y,\eta ,\xi) \lambda (d \xi) d y \nonumber
\end{eqnarray}
where
\begin{eqnarray*}
 % \label{Vx1}
Q_{\varepsilon} (y,\zeta ,\xi) & := & e(\tau_y^{(\varepsilon)}, \zeta)   e(\varepsilon^{-1}t^{(\varepsilon)}_y,\xi) - e (- \phi(y - \cdot), \xi)\\[.2cm]
 & = & e(\varepsilon^{-1}t^{(\varepsilon)}_y,\xi) - e (- \phi(y - \cdot) \nonumber \\[.2cm] & - & \left[  1 - e(\tau_y^{(\varepsilon)}, \zeta) \right] e(\varepsilon^{-1}t^{(\varepsilon)}_y,\xi). \nonumber
\end{eqnarray*}
For $t>0$, the function $e^{-t} - 1 + t$ takes positive values only; hence,
\[
\Psi(t) : = (e^{-t} - 1 + t)/t^2, \quad t>0,
\]
is positive and bounded, say by $C>0$.
Then by means of the inequality
\[
b_1 \cdots b_n - a_1 \cdots a_n \leq \sum_{i=1}^n (b_i - a_i) b_1 \cdots b_{i-1} b_{i+1} \cdots b_n, \quad b_i \geq a_1 >0,
\]
we obtain
\begin{eqnarray*}
 \left\vert e(\varepsilon^{-1}t^{(\varepsilon)}_y,\xi) - e (- \phi(y - \cdot), \xi) \right\vert & \leq& \sum_{z\in \xi} \varepsilon [\phi(y-z)]^2
\Psi\left(\varepsilon \phi(y-z)\right)\\[.2cm] & \times & \prod_{u\in \xi\setminus z} \phi(y-u) \\[.2cm] & \leq& \varepsilon C \sum_{z\in \xi}
 [\phi(y-z)]^2  e(\phi(y-\cdot), \xi\setminus z),
\end{eqnarray*}
and
\begin{eqnarray*}
  \left\vert \left[  1 - e(\tau_y^{(\varepsilon)}, \zeta) \right] e(\varepsilon^{-1}t^{(\varepsilon)}_y,\xi) \right\vert \leq \varepsilon
  \sum_{z\in \zeta} \phi(y-z)  e(\phi(y-\cdot), \xi).
\end{eqnarray*}
Then from (\ref{Vx}) for $\lambda$-almost all $\eta$ we have, see (\ref{o5}),
\begin{equation}
 \label{Vx2}
\left\vert\left(L_{\varepsilon, {\rm ren}}
-L_V \right)k(\eta)  \right\vert  \leq   \varepsilon \|k\|_{\vartheta_0} \left(\widetilde{C}  |\eta|e^{- \vartheta_0 |\eta|} +
  D (\eta) e^{- \vartheta_0 |\eta|}\right),
\end{equation}
with
\begin{eqnarray*}
\widetilde{C}  =  2 C \alpha  \|\phi\|_{L^\infty(\mathbb{R}^d)} \langle \phi \rangle  e^{-\vartheta_0}
\end{eqnarray*}
and
\begin{eqnarray*}
 D (\eta) & = &2 \alpha \exp\left( \langle \phi \rangle e^{-\vartheta_0} \right)
\|\phi\|_{L^\infty(\mathbb{R}^d)}  |\eta| (|\eta|+1). \\[.2cm]
 \end{eqnarray*}
Thus, we conclude that the expression in $( \cdot)$ in the right-hand
side of (\ref{Vx2}) is in $\mathcal{K}_\vartheta$, which yields (\ref{con}) and hence completes the proof.
\end{proof}

\appendix

\section{Proof of \eqref{0Q13}}

For  fixed $t\in (0,T(\vartheta))$ and $G_0\in B_{\rm bs}(\Gamma_0)$, by (\ref{0o12}) we have
\begin{equation}
  \label{y28a}
 \langle \! \langle G_0,k_t \rangle \! \rangle - \langle \! \langle G_0,k_t^{\Lambda_n, N_l} \rangle \! \rangle = \langle \! \langle G_t,k_0 \rangle \! \rangle - \langle \! \langle G_t,q_0^{\Lambda_n, N_l} \rangle \! \rangle  =: \mathcal{I}_n^{(1)}+
 \mathcal{I}_{n,l}^{(2)},
\end{equation}
where we set
\begin{eqnarray*}
 \mathcal{I}_n^{(1)}
 &= & \int_{\Gamma_0} G_t
(\eta)  k_0 (\eta) \bigl( 1 - \mathbb{I}_{\Gamma_{\Lambda_n}} (\eta)\bigr)\lambda (d \eta),
\\[.2cm]  \mathcal{I}_{n,l}^{(2)} & = & \int_{\Gamma_0}G_t(\eta) \Bigl[ k_0 (\eta)\mathbb{I}_{\Gamma_{\Lambda_n}} (\eta) -  q_0^{\Lambda_n, N_l} (\eta)\Bigr]\lambda (d \eta). \nonumber
\end{eqnarray*}
Let us prove that, for an arbitrary $\varepsilon >0$,
\begin{equation}
  \label{y29a}
 |\mathcal{I}_n^{(1)}|< \varepsilon/2,
\end{equation}
for sufficiently big $\Lambda_n$. Recall that $k_0$ is a correlation function, and hence is positive.
Taking into account that
\[
 \mathbb{I}_{\Gamma_\Lambda} (\eta) = \prod_{x\in \eta} \mathbb{I}_\Lambda (x),
\]
we write
\begin{eqnarray}
\label{Ap1}
|\mathcal{I}_n^{(1)}| & \leq & \int_{\Gamma_0} \left\vert G_t(\eta)\right\vert k_0 (\eta)
(1- \mathbb{I}_{\Gamma_{\Lambda_n}}(\eta)) \lambda (d \eta) \\[.2cm] & = & \sum_{p=1}^\infty \frac{1}{p!}
 \int_{(\mathbb{R}^d)^p}\left\vert(G_t)^{(p)} (x_1, \dots x_p)\right\vert k_0^{(p)}(x_1, \dots x_p)\nonumber \\[.2cm]
&& \times  J_{\Lambda_n} (x_1, \dots x_p) d x_1 \cdots d x_p,\nonumber
\end{eqnarray}
where
\begin{eqnarray}
 J_\Lambda (x_1 , \dots , x_p)  &:=&  1 - \mathbb{I}_\Lambda (x_1) \cdots \mathbb{I}_\Lambda (x_p) \nonumber\\[.2cm]
&:=&   \mathbb{I}_{\Lambda^c} (x_1) \mathbb{I}_\Lambda (x_2)\cdots \mathbb{I}_\Lambda (x_p) +
\mathbb{I}_{\Lambda^c} (x_2) \mathbb{I}_\Lambda (x_3)\cdots \mathbb{I}_\Lambda (x_p) \nonumber \\[.2cm]& & +
\cdots + \mathbb{I}_{\Lambda^c} (x_{p-1}) \mathbb{I}_{\Lambda} (x_p) + \mathbb{I}_{\Lambda^c} (x_p),
 \nonumber \\[.2cm] & \leq& \sum_{s=1}^p \mathbb{I}_{\Lambda^c} (x_{s}), \label{Ap2}
\end{eqnarray}
and $\Lambda^c := \mathbb{R}^d \setminus \Lambda$.
Taking into account that $k_0= k_{\mu_0}\in \mathcal{K}_{\vartheta_0}$, by (\ref{Ap2}) we obtain in (\ref{Ap1})
\begin{equation}
  \label{Ap3}
  |\mathcal{I}_n^{(1)}|  \leq \|k_0\|_{\alpha^*} \sum_{p=1}^\infty \frac{p}{p!} e^{-\alpha^* p} \int_{\Lambda_n^c} \int_{(\mathbb{R}^d)^{p-1}}
\Bigl\vert(G_t)^{(p)} (x_1, \dots x_p)\Bigr\vert  d x_1 \cdots d x_p.
\end{equation}
For $t$ as in (\ref{y28a}), one finds $\vartheta< \vartheta_0$ such that
$G_t \in \mathcal{G}_{\vartheta}$, cf. Theorem~\ref{1atm}. For this $\vartheta$ and $\varepsilon$ as in (\ref{y29a}), we pick $\bar{p}\in \mathbb{N}$
such that
\begin{equation}
\label{Ap4}
\sum_{p=\bar{p}+1}^\infty \frac{e^{-\vartheta p}}{p!}
 \int_{(\mathbb{R}^d)^p}\left\vert(G_t)^{(p)} (x_1, \dots x_p)\right\vert  d x_1 \cdots d x_p <   \frac{ \varepsilon e(\vartheta_0 - \vartheta)}{ 4 \|k_0\|_{\vartheta_0}}.
\end{equation}
Then we apply (\ref{Ap4}) and the following evident estimate
\[
p e^{-\vartheta_0 p} \leq e^{-\vartheta p} / e(\vartheta_0 - \vartheta),
\]
and obtain in (\ref{Ap3}) the following
\begin{eqnarray*}
|\mathcal{I}_n^{(1)}| & < & \frac{\|k_0\|_{\vartheta_0}}{e(\vartheta_0 - \vartheta)} \sum_{p=1}^{\bar{p}} \frac{p}{p!} e^{-\vartheta_0 p} \int_{\Lambda_n^c} \int_{(\mathbb{R}^d)^{p-1}}
\left\vert(G_t)^{(p)} (x_1, \dots x_p)\right\vert  d x_1 \cdots d x_p\\[.2cm] & + &  \varepsilon/4.
\end{eqnarray*}
Here the first term contains a finite number of summands, in each of which  $(G_t)^{(p)} $ is in $L^1 ((\mathbb{R}^d)^p)$.
Hence, it can be made strictly smaller than $\varepsilon/4$ by picking big enough
$\Lambda_n$, which yields (\ref{y29a}).

Let us show the same for the second integral in (\ref{y28a}).  Write, see (\ref{0K12}),  (\ref{Q2}), and (\ref{0Q1}),
\begin{eqnarray*}
%\label{y30}
\mathcal{I}^{(2)}_{n,l} & = & \int_{\Gamma_0} G_t(\eta) \int_{\Gamma_0} R^{\Lambda_n}_0 (\eta \cup \xi) \left[1- I_{N_l} (\eta \cup \xi)\right]\lambda (d\eta) \lambda (d \xi)\\ & = & \int_{\Gamma_0} F_t(\eta)R^{\Lambda_n}_0 (\eta ) \left[1- I_{N_l} (\eta)\right]\lambda (d\eta)  \nonumber \\[.2cm]
& = & \sum_{m=N_l+1}^\infty \frac{1}{m!} \int_{\Lambda_n^m}\left( R^{\Lambda_n}_0 \right)^{(m)} (x_1 , \dots , x_m) \nonumber \\[.2cm] & \times & F^{(m)}_t (x_1 , \dots , x_m)
dx_1 \cdots dx_m, \nonumber
\end{eqnarray*}
where
\[
F_t(\eta) := (KG_t)(\eta)= \sum_{\xi\subset \eta} G_t (\xi),
\]
and hence
\begin{equation}
  \label{y31}
F^{(m)}_t (x_1 , \dots , x_m)= \sum_{s=0}^m \ \sum_{\{i_1, \dots , i_s\} \subset \{1, \dots , m\}} \bigl( G_t\bigr)^{(s)}(x_{i_1}, \dots , x_{i_s}).
\end{equation}
By (\ref{0K12}), for $x_i\in \Lambda_n$, $i=1, \dots , m$, we have
\[
k_0^{(m)}(x_1 , \dots , x_m) = \sum_{s=0}^\infty \int_{\Lambda_n^s} \bigl( R^{\Lambda_n}_0 \bigr)^{(m+s)} (x_1 , \dots , x_m, y_1, \dots y_s) dy_1\cdots d y_s,
\]
from which we immediately get that
\begin{equation*}
%\label{y32}
\bigl( R^{\Lambda_n}_0 \bigr)^{(m)} (x_1 , \dots , x_m) \leq k_0^{(m)}(x_1 , \dots , x_m) \leq e^{-\vartheta_0m}\|k_0\|_{\vartheta_0},
\end{equation*}
since $k_0 \in \mathcal{K}_{\vartheta_0}$. Now let $\Lambda_n$ be such that (\ref{y29a}) holds.
Then we can have
\begin{equation}\label{y32a}
|\mathcal{I}^{(2)}_{n,l}| < \varepsilon /2,
\end{equation}
holding for big enough $N_l$ if
$e^{-\vartheta_0 |\cdot|} F_t$ is in $L^1 (\Lambda_n, d \lambda)$. By (\ref{y31}),
\begin{eqnarray*}
& &  \sum_{p=0}^\infty \frac{1}{p!} e^{-\vartheta_0 p} \int_{\Lambda_n^p} |F^{(p)} (x_1 , \dots , x_p)| dx_1 \cdots dx_p \\[.2cm]
& & \qquad \leq  \sum_{p=0}^\infty
 \sum_{s=0}^p \frac{1}{s! (p-s)!} e^{-\vartheta_0 s} \Bigl\Vert\bigl( G_t\bigr)^{(s)}\Bigr\Vert_{L^1(\Gamma_0, d\lambda)}e^{-\vartheta_0 (p-s)} [m(\Lambda_n)]^{p-s}\\[.2cm]
& & \qquad = \|G_t\|_{\vartheta_0} \exp\Bigl( e^{-\vartheta_0}m(\Lambda_n) \Bigr),
\end{eqnarray*}
where $m(\Lambda_n)$ is the Lebesgue measure of $\Lambda_n$, cf. (\ref{3A}). This yields (\ref{y32a}) and thereby also (\ref{0Q13}).

\paragraph{Acknowledgement}
The authors are cordially  grateful to the referee whose valuable and kind remarks were very helpful.
This work was financially supported by the DFG through SFB 701: ``Spektrale Strukturen
und Topologische Methoden in der Mathematik", the IGK ``Stochastics and Real World Models"  and through the
research project 436 POL 125/0-1, which is acknowledged by the authors.

\bibliographystyle{amsplain}

\end{document}